\documentclass{amsart}
\usepackage[utf8]{inputenc}
\usepackage{amsthm,amsmath,amssymb,mathdots}
\usepackage{fullpage}
\usepackage{graphics}
\usepackage{hyperref}
\usepackage[noadjust]{cite}
\usepackage{tikz}
\usetikzlibrary{cd}

\newtheorem{theorem}{Theorem}[section]
\newtheorem{lemma}[theorem]{Lemma}
\newtheorem{corollary}[theorem]{Corollary}
\newtheorem{proposition}[theorem]{Proposition}

\theoremstyle{definition}
\newtheorem{definition}[theorem]{Definition}

\newcommand{\R}{\mathbb{R}}

\newcommand{\C}{\mathbb{C}}

\newcommand{\boxb}{\square_b}
\renewcommand{\S}{S}

\newcommand{\Ell}{\mathcal{L}}
\newcommand{\vspan}{\mathrm{span}}
\newcommand{\dee}{\partial}

\newcommand{\abs}[1]{\left\lvert#1\right\rvert}

\newcommand{\E}{\mathcal{E}}
\newcommand{\levi}{\mathrm{Levi}}

\newcommand{\LL}{\mathbb{L}}
\newcommand{\harm}{\mathcal{H}}

\newcommand{\LLv}[1]{\mathcal{L}_{#1}}

\newcommand{\lt}{\mathcal{L}_t}
\newcommand{\bP}{\mathbb{P}}
\newcommand{\on}{\operatorname}
\renewcommand{\bar}{\overline}

\title{CR embeddability of quotients of the Rossi sphere via spectral theory}
\author{Henry Bosch}
\address[Henry Bosch]{Harvard University, Department of Mathematics, 
Cambridge, MA 02138, USA}
\email{henrybosch@college.harvard.edu }

\author{Tyler Gonzales}
\address[Tyler Gonzales]{Yale University, Applied Mathematics Program, 
New Haven, CT 06511, USA}
\email{tyler.gonzales@yale.edu}

\author{Kamryn Spinelli}
\address[Kamryn Spinelli]{Brandeis University, Department of Mathematics, Waltham, MA 02453, USA}
\email{kspinelli@brandeis.edu}

\author{Gabe Udell}
\address[Gabe Udell]{Cornell University, Department of Mathematics, 
Ithaca, NY 14853, USA}
\email{gru5@cornell.edu}

\author{Yunus E. Zeytuncu}
\address[Yunus E. Zeytuncu]{University of Michigan--Dearborn, Department of Mathematics and Statistics, 
Dearborn, MI 48128, USA}
\email{zeytuncu@umich.edu}

\thanks{This work is supported by NSF (DMS-1950102 and DMS-1659203). The work of the last author is also partially supported by a grant from the Simons Foundation (\#353525).}

\subjclass[2010]{Primary 32V05; Secondary 32V20}
\keywords{Kohn Laplacian, Rossi sphere, 3-manifolds}

\begin{document}

\begin{abstract}
We look at the action of finite subgroups of $\on{SU}(2)$ on $\S^3$, viewed as a CR manifold, both with the standard CR structure as the unit sphere in $\C^2$ and with a perturbed CR structure known as the Rossi sphere. We show that quotient manifolds from these actions are indeed CR manifolds, and relate the order of the subgroup of $\on{SU}(2)$ to the asymptotic distribution of the Kohn Laplacian's eigenvalues on the quotient. We show that the order of the subgroup determines whether the quotient of the Rossi sphere by the action of that subgroup is CR embeddable. Finally, in the unperturbed case, we prove that we can determine the size of the subgroup by using the point spectrum.
\end{abstract}

\maketitle

\section{Introduction}
Every strongly pseudoconvex CR manifold with real dimension at least 5 is globally embeddable (see \cite{Monvel75} and \cite[Chapter 12]{CS01}).  Thus, 3-dimensional strongly pseudoconvex CR manifolds are especially interesting for their potential non-embeddability.  As early as 1965, \cite{Rossi65} described a perturbed CR structure on $S^3\subset \C^2$ that was not globally embeddable into any $\mathbb{C}^N$. The resulting abstract CR manifold is called the Rossi Sphere. One can see the obstruction to embeddability by noting that all the CR functions in this setting are even (see \cite[Chapter 12]{CS01}). Another formulation of the non-embeddability is the existence of arbitrarily small eigenvalues in the spectrum of the Kohn Laplacian (which is equivalent to the non-closedness of the range of $\overline{\partial}_b$) defined by the perturbed CR structure in this setting (see \cite{REU17, REU18}). Indeed, one can show that the perturbed Kohn Laplacian has eigenvalues that converge to $0$ from the right hand side. In other words, $0$ belongs to the essential spectrum of the Kohn Laplacian. 

In this note, we follow the spectral-theoretic approach of \cite{REU17, REU18} and investigate the CR embeddability of the quotients of the Rossi sphere by the action of finite subgroups of $\on{SU}(2)$. First, we focus on the quotient of the Rossi sphere by the action of the antipodal map, which is topologically the real projective space $\R \bP^3$. This quotient is known to be CR embeddable into $\mathbb{C}^3$; we present a new proof of this statement by showing that the non-zero eigenvalues of the Kohn Laplacian here are uniformly bounded away from zero. Later, in Section \ref{sec:rossi-embeddability-classification}, we consider quotients by other finite subgroups of $\on{SU}(2)$ and relate the embeddability question to the order of the subgroup: specifically, the quotient by a finite subgroup of $\on{SU}(2)$ is CR embeddable if and only if the order of the group is even. In Section \ref{sec:hear-group-order}, we pursue the relation between the quotient of a finite group $G\subseteq \on{SU}(2)$ with the eigenvalues of the unperturbed Kohn Laplacian on the quotient of $\S^3$ by $G$.
 We show that one can hear the size of the quotienting group on the sphere (following the terminology of \cite{Kac}, we say that we can hear a quantity, or it is audible, if the quantity may be explicitly determined given the spectrum). We note that similar results on Lens spaces in the Riemannian setting were observed in \cite{Ikeda79}.

We refer the reader to standard books \cite{CS01} and \cite{Boggess91CR} for the basics of CR geometry. For completeness, we review some of these points on quotient manifolds in the second section. We use the spectral theory approach to the embedding problem utilized recently in \cite{REU17, REU18} that dates back to the work in \cite{Burns79, BurnsEpstein}. We also note the work of Fu in \cite{Fu2005, Fu2008} for further connections between geometry and spectral theory of the $\overline{\partial}$-Neumann Laplacian. 
 
In the rest of this note, we use $\S^3$ to denote the unit sphere in $\mathbb{C}^2$. We use the following notation for two vector fields $\LLv{}$ and $\overline{\LLv{}}$ on $S^3 \subset \C^2$:
\begin{align*}
\LLv{} = \overline{z}_1 \frac{\partial}{\partial z_2} - \overline{z}_2 \frac{\partial}{\partial z_1} \text{ and }
\overline{\LLv{}} = z_1 \frac{\partial}{\partial \overline{z}_2} - z_2 \frac{\partial}{\partial \overline{z}_1}.
\end{align*}
The standard CR structure on $\S^3$ is generated by the $(1,0)$ vector field $\LLv{}$. The perturbed CR structure on $\S^3$ is generated by $\lt = \LLv{} + \overline{t} \overline{\LLv{}}$ for complex $|t| < 1$. The pair $(\S^3, \lt)$ denotes the Rossi sphere. We use $\boxb$ to denote the Kohn Laplacian on $(\S^3,\LLv{})$ and $\boxb^t$ to denote the one on $(\S^3,\lt)$. A simple calculation gives the following representation
\[\boxb^t=-\lt \frac{1 + |t|^2}{(1 - |t|^2)^2} \overline{\lt}~ \text{ for any }|t|<1.\]


\section{CR geometry and discrete quotients}

We start with introducing the notation. Given a smooth map $f: M \to N$ of manifolds, we denote its differential by $df$. Its action on the tangent space at $p \in M$ is $d_p f$. Given a vector space or vector bundle $V$, we let $V^\C$ be its complexification; in the case of a tangent space $T_p M$ or tangent bundle $TM$, the complexification is denoted   $T_p^\C M$ and $T^\C M$, respectively. Given a map $\phi$ of vector spaces or vector bundles, its complexification (given by extending by linearity over complex scalars), is given by $\phi^\C$. 

Given a discrete group action on a CR manifold $M$, the CR structure of $M$ may be passed through to the quotient in a natural way when the group action in some sense agrees with the CR bundle. 
$M$ and its quotient are locally indistinguishable, and therefore the CR structure descends to the quotient in a natural way.

\subsection{Induced CR structures on quotient manifolds}

 The definition below details what it means for a CR bundle to be \textit{compatible} with the action of a discrete group. Here and throughout, we denote by $\ell_g$ the map $M \to M$ given by action of the group element $g$. 
 
\begin{definition}
    Let $M$ be a smooth manifold, and $T^{\C}M = TM \otimes \C$ its complexified tangent bundle. Let $\Gamma$ be a group which acts smoothly on $M$. We say that a subbundle $\LL$ of $T^{\C}M$ is \textit{compatible} with $\Gamma$ if for each $g \in \Gamma$,  $(d\ell_{g})^{\C}$ restricts to a bundle isomorphism $\LL \to \LL$. 
\end{definition}
Note that when $M$ is a Lie group and $\LL$ a vector bundle which is invariant with respect to complexified differentials of left-multiplications, $\LL$ is automatically compatible with the action of any discrete subgroup of $M$. When the CR bundle of $M$ is compatible with $\Gamma$, it gives rise to a CR structure on the quotient manifold $M / \Gamma$.


\begin{proposition}\label{quotient}
    Let $M$ be an abstract CR manifold with defining bundle $\LL \subset T^{\C}M$. Let $\Gamma$ be a discrete group which acts smoothly, freely and properly on $M$, and suppose that $\LL$ is compatible with $\Gamma$. Then the orbit space $M / \Gamma$ is a topological manifold of dimension equal to $\dim M$ and has a unique smooth structure such that the quotient map $\pi: M \to M / \Gamma$ is a local diffeomorphism. Further, $M / \Gamma$ is an abstract CR manifold with defining bundle $\LL / \Gamma = (d\pi)^{\C}(\LL)$.
\end{proposition}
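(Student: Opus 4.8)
The plan is to separate the purely topological/smooth statement from the CR-geometric statement, since the first part is standard. For the smooth part, I would invoke the classical quotient manifold theorem: if a discrete (hence, as a Lie group, 0-dimensional) group $\Gamma$ acts smoothly, freely, and properly on a smooth manifold $M$, then $M/\Gamma$ is a topological manifold of dimension $\dim M$ carrying a unique smooth structure making $\pi \colon M \to M/\Gamma$ a smooth submersion; since $\dim \Gamma = 0$, the submersion is in fact a local diffeomorphism, and uniqueness of the smooth structure is part of the classical statement. So the first two sentences of the proposition require essentially only a citation (e.g.\ to Lee's \emph{Introduction to Smooth Manifolds}) and a remark that properness of a discrete action is equivalent to the action being a covering space action.

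The substantive work is constructing the CR structure $\LL/\Gamma := (d\pi)^\C(\LL)$ and checking it is well-defined and satisfies the CR axioms. First I would verify well-definedness: for $p, p' \in M$ with $\pi(p) = \pi(p')$, there is a unique $g \in \Gamma$ with $p' = \ell_g(p)$, and compatibility gives $(d_p \ell_g)^\C(\LL_p) = \LL_{p'}$; combined with the chain rule $\pi \circ \ell_g = \pi$, i.e.\ $(d_{p'}\pi)^\C \circ (d_p\ell_g)^\C = (d_p\pi)^\C$, this shows $(d_p\pi)^\C(\LL_p) = (d_{p'}\pi)^\C(\LL_{p'})$, so the fiber over each point of $M/\Gamma$ is unambiguous. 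Since $\pi$ is a local diffeomorphism, $(d_p\pi)^\C$ is a linear isomorphism $T_p^\C M \to T_{\pi(p)}^\C(M/\Gamma)$, so $\LL/\Gamma$ is a subbundle of the expected rank (smoothness of the subbundle follows by pushing forward a local frame of $\LL$ through a local section of $\pi$). Then the two CR axioms transfer fiberwise through this isomorphism: formal integrability, $[\Gamma(\LL/\Gamma), \Gamma(\LL/\Gamma)] \subseteq \Gamma(\LL/\Gamma)$, follows because locally $\pi$-related vector fields have $\pi$-related brackets (a local diffeomorphism pushes the Lie bracket forward), and $\LL/\Gamma \cap \overline{\LL/\Gamma} = 0$ follows because $(d_p\pi)^\C$ is an isomorphism intertwining complex conjugation with the conjugation on $T_p^\C M$, where $\LL_p \cap \overline{\LL_p} = 0$ by hypothesis on $M$. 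One should also note $\dim_\C(\LL/\Gamma) = \tfrac12(\dim M - 1)$ is inherited, so the quotient is a CR manifold of hypersurface type exactly when $M$ is.

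The main obstacle — though it is more of a bookkeeping point than a genuine difficulty — is handling the ``descent'' of local objects cleanly: making precise that a local frame of $\LL$ near $p$, pushed forward by the inverse of a local diffeomorphism chart of $\pi$, patches together to a globally well-defined smooth subbundle independent of the choices, and that integrability (a statement about sections, i.e.\ global or at least open-set data) can be checked locally and hence pulled back to $M$ where it is known. Concretely, I would fix an open $U \subseteq M/\Gamma$ evenly covered by $\pi$, pick a sheet $\tilde U \subseteq M$ with $\pi|_{\tilde U} \colon \tilde U \to U$ a diffeomorphism, and transport everything through $(\pi|_{\tilde U})^{-1}$; the well-definedness computation above guarantees that the resulting local data agree on overlaps, so they glue. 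Uniqueness of the CR structure, if desired, is immediate from the requirement that $\pi$ be a CR local diffeomorphism, which forces $\LL/\Gamma = (d\pi)^\C(\LL)$. I would close by remarking that, by the note following the compatibility definition, this proposition applies in particular to $M = \S^3 \subset \mathrm{SU}(2)$ with either the standard bundle $\LLv{}$ or the perturbed bundle $\lt$ and any finite subgroup $\Gamma \subseteq \mathrm{SU}(2)$, since left-invariant bundles are automatically compatible.
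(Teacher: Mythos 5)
Your proposal is correct and follows essentially the same route as the paper's own argument: cite the quotient manifold theorem for the smooth structure, use compatibility together with $\pi\circ\ell_g=\pi$ to see that $(d\pi)^{\C}(\LL)$ has well-defined fibers and local trivializations, check $\LL/\Gamma\cap\overline{\LL/\Gamma}=0$ fiberwise through the isomorphism $(d_p\pi)^{\C}$, and obtain involutivity from the fact that $(d\pi)^{\C}$ pushes Lie brackets forward. The only differences are cosmetic (e.g.\ your explicit well-definedness computation across a single orbit, which the paper absorbs into its commutative-diagram argument).
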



\subsection{The induced Cauchy-Riemann complex and Kohn Laplacian on quotient manifolds}
In order to define the Kohn Laplacian on a quotient manifold $M / \Gamma$, we need to deduce the structure of its tangential Cauchy-Riemann complex, see also \cite[Chapter 8]{Boggess91CR}. Indeed, for quotients defined as above, the tangential Cauchy-Riemann complex on the quotient manifold is induced by that on the parent manifold.

\begin{proposition}
    Suppose that $M$ satisfies the hypotheses of Proposition \ref{quotient}, and that $M$ has a Riemannian metric such that the action of every $g \in \Gamma$ is an isometry.
    Then in addition to the conclusions of Proposition \ref{quotient}, the intrinsic tangential Cauchy Riemann complex on $M/\Gamma$ is induced by that on $M$. More precisely, let $\E_M(p,q)$ and $\E_{M/\Gamma}(p,q)$ be the fiber bundles of $(p,q)$-forms on $M$ and $M/\Gamma$, respectively, and let $\bar{\dee}_M$ and $\bar{\dee}_{M / \Gamma}$ be the tangential Cauchy-Riemann operators associated with the CR structures on $M$ and $M / \Gamma$, respectively. Also let $\phi: M \to M / \Gamma$ be the quotient map. Then for each $p,q \geq 0$, $\overline{\dee}_{M} \circ \phi^* = \phi^* \circ \overline{\dee}_{M/\Gamma}$ as maps $\E_{M/\Gamma}^{p,q} \to \E_{M}^{p,q+1}$. In other words, the following diagram commutes:

    \tikzcdset{column sep/superlong/.initial=6em}
    \[\begin{tikzcd}[row sep=huge, column sep=superlong]
        \E_M^{p,q} \arrow[r,"\overline{\dee}_{M}"]
            & \E_M^{p,q+1} \\
        \E_{M/\Gamma}^{p,q} \arrow[r, "\overline{\dee}_{M/\Gamma}"] \arrow[u, "\phi^*"]
            & \E_{M/\Gamma}^{p,q+1} \arrow[u, "\phi^*"]
    \end{tikzcd}\]
\end{proposition}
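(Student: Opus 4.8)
The plan is to reduce the statement to a purely local computation using the fact, established in Proposition~\ref{quotient}, that $\phi$ is a local diffeomorphism whose complexified differential carries $\LL$ onto $\LL/\Gamma$. First I would recall the construction of the intrinsic tangential Cauchy--Riemann operator: one fixes the orthogonal complement $(\LL \oplus \bar\LL)^\perp$ with respect to the chosen metric, uses it to define the bundles $\E^{p,q}$ as sections of the appropriate exterior powers of the annihilator of $\bar\LL$ (quotiented suitably), and defines $\bar\dee_b$ as the composition of the de Rham differential with the natural projection onto $\E^{p,q+1}$. The key observation is that because every $\ell_g$ is an isometry and a CR map, the metric and the splitting $T^\C M = \LL \oplus \bar\LL \oplus (\LL\oplus\bar\LL)^\perp$ descend to $M/\Gamma$, and the defining data of $\bar\dee_{M/\Gamma}$ pulls back under $\phi^*$ exactly to the defining data of $\bar\dee_M$.

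The steps, in order, are: (i) verify that the metric on $M$ descends to a well-defined metric on $M/\Gamma$ for which $\phi$ is a local isometry — this uses $\Gamma$-invariance of the metric and the fact that $\phi$ is a local diffeomorphism; (ii) show that the splitting of $T^\C(M/\Gamma)$ into $\LL/\Gamma$, $\overline{\LL/\Gamma}$, and the metric complement pulls back under $(d\phi)^\C$ to the corresponding splitting on $M$, which is immediate from compatibility of $\LL$ together with (i); (iii) conclude that $\phi^*$ maps $\E_{M/\Gamma}^{p,q}$ into $\E_M^{p,q}$ and is compatible with the projections used to define the two $\bar\dee_b$ operators; (iv) invoke naturality of the exterior derivative, $d_M \circ \phi^* = \phi^* \circ d_{M/\Gamma}$, and combine with (iii) to obtain $\bar\dee_M \circ \phi^* = \phi^* \circ \bar\dee_{M/\Gamma}$. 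Since everything is local and $\phi$ is a local diffeomorphism, each identity may be checked on a trivializing neighborhood $U$ on which $\phi|_U$ is a diffeomorphism onto its image.

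The main obstacle I anticipate is step (iii): making precise that the bundle-theoretic projection operators entering the definition of $\bar\dee_b$ — in particular the projection of a $(q+1)$-form onto its $(p,q+1)$ component, which is what distinguishes the \emph{tangential} Cauchy--Riemann complex from the ambient de Rham complex — genuinely commute with $\phi^*$. This requires knowing that $\phi^*$ respects not just the bundles $\E^{p,q}$ but the entire filtration/grading by which those quotient bundles are defined, and that is exactly where the isometry hypothesis is used: without a $\Gamma$-invariant metric there is no canonical complement and the grading need not descend. Once one observes that $(d\phi)^\C$ is a fiberwise isomorphism intertwining the two gradings, the commutativity is formal. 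The remaining verifications (that $\phi^*$ lands in smooth forms, that the diagram is the one claimed for all $p,q$) are routine given the local diffeomorphism property.
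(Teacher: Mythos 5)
Your proposal is correct and follows essentially the same route as the paper's proof: both hinge on the observation that $\phi$ is a CR local diffeomorphism, so that $\phi^*$ preserves the $(p,q)$-bigrading and therefore commutes with the projections used to define the tangential operators. The only difference is one of packaging --- the paper invokes the general functoriality statement $\bar{\dee}_M \circ \pi_M^{p,q} \circ \phi^* = \pi_M^{p,q+1} \circ \phi^* \circ \bar{\dee}_{M/\Gamma}$ for CR maps (Theorem 2 of Section 9.2 in \cite{Boggess91CR}) and then notes that the projections act as identities on the image of $\phi^*$, whereas you re-derive that functoriality in this special case directly from the definition $\bar{\dee}_b = \pi^{p,q+1} \circ d$, the descent of the metric splitting, and naturality of the exterior derivative.
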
 

The same statement is true for the adjoint operator $\bar \dee^*$ by standard arguments. It follows that the Kohn Laplacian on $M / \Gamma$ is induced by that on $M$. Because the Kohn Laplacian acts locally, we can compute its action on $\Gamma$-invariant forms on $M$ by computing the Kohn Laplacian on $M / \Gamma$ and then pulling back to $M$, or by first pulling back to $M$ and then computing the Kohn Laplacian there.

\begin{proposition} \label{kohn-laplacian-commute}
    Let $\boxb^M$ and $\boxb^{M / \Gamma}$ denote the Kohn Laplacian on $M$ and $M / \Gamma$, respectively. Then $\phi^* \circ \boxb^{M / \Gamma} = \boxb^M \circ \phi^*$.
\end{proposition}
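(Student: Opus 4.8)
The plan is to reduce everything to the definition $\boxb=\bar\dee\,\bar\dee^*+\bar\dee^*\bar\dee$ together with two intertwining identities: $\bar\dee_M\circ\phi^*=\phi^*\circ\bar\dee_{M/\Gamma}$ (the previous proposition) and its adjoint analogue $\bar\dee_M^*\circ\phi^*=\phi^*\circ\bar\dee_{M/\Gamma}^*$ (the ``standard arguments'' alluded to just above the statement). Granting both, the proposition is a one-line computation, as an identity of differential operators on smooth $(p,q)$-forms on $M/\Gamma$:
\begin{align*}
\boxb^M\circ\phi^*
&=\bar\dee_M^*(\bar\dee_M\circ\phi^*)+\bar\dee_M(\bar\dee_M^*\circ\phi^*)
=\bar\dee_M^*(\phi^*\circ\bar\dee_{M/\Gamma})+\bar\dee_M(\phi^*\circ\bar\dee_{M/\Gamma}^*)\\
&=(\bar\dee_M^*\circ\phi^*)\,\bar\dee_{M/\Gamma}+(\bar\dee_M\circ\phi^*)\,\bar\dee_{M/\Gamma}^*
=\phi^*\bigl(\bar\dee_{M/\Gamma}^*\bar\dee_{M/\Gamma}+\bar\dee_{M/\Gamma}\bar\dee_{M/\Gamma}^*\bigr)
=\phi^*\circ\boxb^{M/\Gamma},
\end{align*}
where I apply the $\bar\dee$-intertwining to move $\phi^*$ inward and the $\bar\dee^*$-intertwining to move it back out. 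So the entire content is the adjoint intertwining.

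To establish $\bar\dee_M^*\circ\phi^*=\phi^*\circ\bar\dee_{M/\Gamma}^*$, I would first record that $\phi$ is a \emph{local isometric CR-isomorphism}: it is a local diffeomorphism by Proposition \ref{quotient}, it is a CR map with $(d\phi)^\C(\LL)=\LL/\Gamma$, and since the metric on $M$ is $\Gamma$-invariant it descends to the unique metric on $M/\Gamma$ for which $\phi$ is a local isometry. Hence $\phi^*$ intertwines, fibrewise and compatibly, the CR bundles, their conjugates, the bidegree decompositions of forms, the induced pointwise Hermitian inner products on the bundles $\E^{p,q}$, and the Riemannian volume forms. Now $\bar\dee^*$ agrees on smooth forms with the formal adjoint of $\bar\dee$, a first-order differential operator whose expression in any chart and local CR coframe is a universal algebraic expression in exactly this data and its first derivatives; since $\phi$ is a local diffeomorphism, $\phi^*$ commutes with any such local operator once the underlying data are matched, which gives the claim.

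I expect the real obstacle to be making that last step airtight rather than citing folklore: the tangential Cauchy--Riemann complex is not the de Rham complex, so there is no literal identity $\bar\dee^*=\pm\star\bar\dee\star$, and one must track the pointwise duality pairing used to define the formal adjoint. In the case of actual interest ($M=\S^3$ compact, $\Gamma$ finite) there is a clean global alternative that sidesteps this bookkeeping: $\phi$ is a finite Riemannian covering, so $\langle\phi^*\alpha,\phi^*\beta\rangle_{L^2(M)}=|\Gamma|\,\langle\alpha,\beta\rangle_{L^2(M/\Gamma)}$, and $\phi^*$ identifies $L^2(M/\Gamma)$ with the $\Gamma$-invariant subspace of $L^2(M)$, on which the averaging map $P=\tfrac1{|\Gamma|}\sum_{g\in\Gamma}\ell_g^*$ is the self-adjoint orthogonal projection and commutes with $\bar\dee_M$ (each $\ell_g$ being a CR isometry). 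For smooth $\beta$ on $M/\Gamma$ and arbitrary smooth $u$ on $M$, one rewrites $\langle\bar\dee_M u,\phi^*\beta\rangle_M=\langle\bar\dee_M(Pu),\phi^*\beta\rangle_M$ using $\Gamma$-invariance of $\phi^*\beta$, pushes down to $M/\Gamma$ via $Pu=\phi^*u'$, and reads off $\bar\dee_M^*\phi^*\beta=\phi^*\bar\dee_{M/\Gamma}^*\beta$. Either route then feeds into the display above to conclude.
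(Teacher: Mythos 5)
Your proof is correct and follows essentially the same route as the paper: the paper likewise deduces the proposition from the $\bar\dee$-intertwining of the previous proposition together with the analogous statement for $\bar\dee^*$, which it attributes to ``standard arguments.'' Your additional material justifying the adjoint intertwining (via $\phi$ being a local isometric CR-isomorphism, or the finite-covering/averaging argument) simply fills in what the paper leaves implicit, and is sound.
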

\begin{proof}
    The statement follows from the previous proposition regarding $\bar{\dee}$ and the analogous statement for $\bar{\dee}^*$.
\end{proof}
For readers interested in reading the details of these arguments on CR manifolds, we recommend \cite[Chapter 4]{CS01}. For the similar ideas on domains (not on manifolds), we recommend \cite{StraubeBook}.

\subsection{Equivalence of the Levi form on quotient CR manifolds; strong pseudoconvexity and quotients}
The final piece necessary to relate the embeddability to the spectrum of the Kohn Laplacian is strong pseudoconvexity. The same idea applies here as in the previous arguments: since strong pseudoconvexity is a local property, and we are taking quotients by discrete groups acting isometrically, the identification of local geometry on $M$ and $M / \Gamma$ allows us to say that $M / \Gamma$ is strongly pseudoconvex as well. In fact, the condition of strong pseudoconvexity is intimately tied to the Levi form on a CR manifold, which can be clearly seen to act locally.

\begin{proposition}
Let $M$ be a strongly pseudoconvex CR manifold of hypersurface type. Then any discrete quotient of $M$ in the way defined above is strongly pseudoconvex.
\end{proposition}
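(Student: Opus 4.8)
The plan is to reduce the statement to the local comparison of Levi forms on $M$ and $M/\Gamma$, which is the recurring theme in this section. First I would fix a point $n \in M/\Gamma$, choose a preimage $m \in \pi^{-1}(n)$, and pick a $\Gamma$-saturated neighborhood $U$ of $m$ on which $\pi|_U$ is a diffeomorphism onto $\pi(U)$. Because $\pi$ is a local CR-diffeomorphism (its complexified differential carries $\LL$ onto $\LL/\Gamma$ by Proposition \ref{quotient}), and because the Levi form is a pointwise bilinear object defined purely in terms of the CR bundle and the bracket structure (equivalently, in terms of a local contact/defining form and its exterior derivative restricted to $\LL$), the pullback $(\pi|_U)^*$ identifies the Levi form of $M/\Gamma$ at $n$ with the Levi form of $M$ at $m$ up to the isomorphism $(d_m\pi)^\C : \LL_m \to (\LL/\Gamma)_n$. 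This is where I would invoke the isometry hypothesis carried over from the setup: choosing the local defining form for $\LL/\Gamma$ to be the pushforward under $\pi|_U$ of one for $\LL$, the two Levi forms agree as Hermitian forms under the identification.

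Next I would spell out the conclusion at the level of definiteness. Strong pseudoconvexity of $M$ means that at every point the Levi form (for a suitable choice of sign of the contact form) is positive definite on the $(1,0)$-part of the CR bundle. Since the identification above is a $\C$-linear isomorphism of fibers intertwining the two Hermitian forms, positive-definiteness transfers: the Levi form of $M/\Gamma$ at $n$ is positive definite. As $n$ was arbitrary, $M/\Gamma$ is strongly pseudoconvex. One small point to address is global consistency of the sign of the contact form on $M/\Gamma$: locally each choice on $M$ pushes forward, and on overlaps the transition functions are positive because they already are on $M$ (or, in the coorientable case of interest, one simply notes the relevant line bundle descends); since in our application $M = \S^3$ is a Lie group with a globally defined contact form invariant under left translations, this is automatic and I would remark on that rather than belabor the general case.

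The main obstacle — really the only nontrivial issue — is making the phrase ``the Levi form acts locally'' precise, i.e. checking that the intrinsic Levi form is genuinely natural under local CR-diffeomorphisms and does not secretly depend on a choice (of contact form, of complement to $\LL \oplus \overline{\LL}$) that fails to descend. I would handle this by using the formulation of the Levi form as $\mathcal{L}(X,\overline{Y}) = \tfrac{1}{2i}\theta([X,\overline{Y}])$ for sections $X,Y$ of the $(1,0)$ bundle and $\theta$ a local contact form annihilating $\LL \oplus \overline{\LL}$; since $\pi$ is a local diffeomorphism, it commutes with Lie brackets of vector fields (as already used in the bracket computation in Proposition \ref{quotient}) and pulls back contact forms to contact forms, so the naturality is immediate. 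Everything else is a routine transfer of a pointwise positivity condition across a fiberwise linear isomorphism, so I would state it compactly and move on.
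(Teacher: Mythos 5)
Your proposal is correct and follows essentially the same route as the paper: both arguments rest on the observation that the Levi form is natural under the local diffeomorphism $\pi$ (because pushforward by a local diffeomorphism commutes with Lie brackets and carries $\LL$ onto $\LL/\Gamma$), so the Levi form on $M/\Gamma$ is the pushforward of that on $M$ and definiteness transfers fiberwise. The only cosmetic difference is that the paper phrases the Levi form via the projection onto $T^\C M/(\LL\oplus\overline{\LL})$ rather than via a contact form $\theta$, which sidesteps the sign-consistency point you discuss.
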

\begin{proof}
    The claim follows from considering the Levi form on $M / \Gamma$ and observing that it is exactly the pushforward by the quotient map of the Levi form on $M$.
\end{proof}

For more details, again see \cite[Chapter 10]{Boggess91CR}. As an application of this machinery, we gain information about quotients of the sphere by certain subgroups. 
\begin{corollary} \label{rossi-quotients-are-cr-mflds}
    Let $\Gamma$ be a finite subgroup of $\on{SU}(2)$.
    Then $\Gamma$ acts on $S^3 \cong \on{SU}(2)$ by left multiplication.
    The quotient of the sphere by $\Gamma$, endowed with either the standard or Rossi CR structure, is a strongly pseudoconvex abstract CR manifold.
\end{corollary}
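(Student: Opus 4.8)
The plan is to realize $\S^3$ as the Lie group $\on{SU}(2)$ and then check that the left-multiplication action of $\Gamma$ meets every hypothesis needed to invoke Proposition~\ref{quotient} and the strong pseudoconvexity proposition above. Identify $(z_1,z_2)\in\S^3$ with the matrix $\left(\begin{smallmatrix} z_1 & -\bar z_2 \\ z_2 & \bar z_1\end{smallmatrix}\right)\in\on{SU}(2)$; since an element of $\on{SU}(2)$ is determined by its first column, left multiplication by $g\in\on{SU}(2)$ on the group corresponds exactly to the restriction to $\S^3$ of the linear map $p\mapsto gp$ on $\C^2$. A finite subgroup $\Gamma$ therefore acts smoothly; it acts freely because a $g\in\on{SU}(2)$ fixing some $p\in\S^3$ has $1$ as an eigenvalue, and a unitary matrix of determinant $1$ with eigenvalue $1$ is the identity; and the action is automatically proper as $\Gamma$ is finite. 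So the Quotient Manifold Theorem supplies the smooth structure on $\S^3/\Gamma$.

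The essential point is the compatibility hypothesis: I claim the defining bundles $\vspan\{\LLv{}\}$ (standard case) and $\vspan\{\lt\}$ (Rossi case) are invariant under the complexified differentials of left translations, after which the remark following the definition of compatibility applies verbatim. I would prove the sharper statement that $\LLv{}$ and $\overline{\LLv{}}$ are themselves left-invariant vector fields on $\on{SU}(2)$. With $\epsilon=\left(\begin{smallmatrix} 0 & -1 \\ 1 & 0\end{smallmatrix}\right)$, the field $\LLv{}$ assigns to $p=(z_1,z_2)$ the holomorphic tangent vector $\epsilon\bar p$; pushing forward along $p\mapsto gp$ multiplies this by $g$, whereas evaluating $\LLv{}$ at $gp$ gives $\epsilon\,\overline{gp}=\epsilon\bar g\,\bar p$, so left-invariance is equivalent to the identity $g\epsilon=\epsilon\bar g$, which is a one-line check for $g=\left(\begin{smallmatrix} a & -\bar b \\ b & \bar a\end{smallmatrix}\right)\in\on{SU}(2)$. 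Conjugating yields the analogous statement for $\overline{\LLv{}}$, hence $\lt=\LLv{}+\bar t\,\overline{\LLv{}}$ spans a left-invariant line bundle as well; Proposition~\ref{quotient} then endows $\S^3/\Gamma$ with the induced CR structure in both cases.

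For strong pseudoconvexity, recall that $(\S^3,\LLv{})$ is the standard strongly pseudoconvex unit sphere and that $(\S^3,\lt)$ is strongly pseudoconvex for all $|t|<1$, its Levi form being a small perturbation of the standard one (see \cite{CS01, REU17}); both structures are of hypersurface type. The round metric on $\S^3$ is the bi-invariant metric on $\on{SU}(2)$, so $\Gamma$ acts by isometries, which places these quotients exactly in the situation of the strong pseudoconvexity proposition above. Applying it gives that $\S^3/\Gamma$, with either the standard or the Rossi CR structure, is a strongly pseudoconvex abstract CR manifold, as claimed.

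I expect the only real difficulty to be bookkeeping: keeping the identification $\S^3\cong\on{SU}(2)$, the linear $\C^2$-action, left translation, and the coordinate formulas for $\LLv{},\overline{\LLv{}}$ consistent so that $g\epsilon=\epsilon\bar g$ really is the invariance statement. Once $\LLv{}$ is known to be left-invariant there is no further obstacle, since everything else is a direct application of the results already in place.
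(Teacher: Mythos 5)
Your proposal is correct and follows essentially the same route as the paper: both arguments rest on the left-invariance of $\LLv{}$ and $\overline{\LLv{}}$ (hence of $\lt$ as a linear combination), which makes the defining bundles automatically compatible with any discrete subgroup, and then let strong pseudoconvexity descend through the quotient machinery. You simply supply details the paper leaves implicit — the explicit verification $g\epsilon = \epsilon\bar g$ of left-invariance and the check that the action is smooth, free, and proper — all of which are accurate.
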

\begin{proof}
    The standard CR vector fields $\Ell, \bar{\Ell}$ are invariant with respect to complexified differentials of left-multiplications in the Lie group and therefore are compatible with the group action. The perturbed CR vector fields $\Ell_t, \bar{\Ell_t}$ are also left-invariant in this way, because they are linear combinations of $\Ell$ and $\bar{\Ell}$; hence, the compatibility condition of $\Ell_t$ and $\bar{\Ell_t}$ is automatically satisfied. Therefore, $\S^3 / \Gamma$ is a well-defined abstract CR manifold with this perturbed CR structure. As $\S^3$ is strongly pseudoconvex when endowed with either CR structure, $\S^3 / \Gamma$ is too.
\end{proof}


\section{Embeddability of $(\mathbb{RP}^3, \overline{\mathcal{L}_t})$ via spectral theory}\label{three}

In this section, we prove that the quotient of the Rossi sphere by the antipodal map is an embeddable CR manifold via spectral theory (see also \cite{Burns79} and \cite[Chapter 12]{CS01}).  We denote this CR manifold by $(\mathbb{RP}^3, \overline{\mathcal{L}_t})$ as in \cite{Epstein}

Since $\on{SU}(2)\cong \S^3 \subset \C^2$ as a smooth manifold, in this part we write elements of $\on{SU}(2)$ as points $(\xi_1,\xi_2)\in \C^2$ with $\abs{\xi_1}^2+\abs{\xi_2}^2=1$ and where $(\xi_1,\xi_2)\cdot (z_1,z_2)=(\xi_1 z_1-\overline{\xi_2}z_2,\xi_2 z_1 +\overline{\xi_1}z_2)$. In this section, we consider the quotient of the 3-sphere $\S^3$ by the antipodal map, that is the left action of the embedded subgroup $C_2 = \{(1, 0), (-1, 0)\} = \{\pm \mathrm{Id}\}$, endowed with the perturbed CR structure. Thanks to Corollary \ref{rossi-quotients-are-cr-mflds}, this quotient is a well-defined, strongly pseudoconvex, abstract CR manifold. Moreover, the Rossi sphere and its quotient have the same local geometry. This allows us to carry out our study of the spectrum of $\boxb^t$ on the quotient by investigating the spectrum of $\boxb^t$ as it acts on $C_2$-invariant functions on the sphere. 

\subsection{Spherical harmonic decomposition; invariant subspaces under $\boxb^t$}
The space of $L^2$-integrable functions on $\S^3$ has a convenient decomposition into spaces of homogeneous spherical harmonics. Let $\harm_k(\S^3)$ denote the space of degree-$k$ spherical harmonics and $\harm_{p,q}(\S^3)$ denote the space of bidegree-$(p,q)$ spherical harmonics\footnote{We drop $\S^3$ in the notation when the context is clear.}. We have the following standard decomposition:
    \[L^2(\S^3) = \bigoplus_{k=0}^\infty \harm_{k}(\S^3).\]
We refer to \cite{Axler13Harmonic} for further details on spherical harmonics. The following statement about $L^2$-integrable functions on the quotient is evident.
\begin{proposition}
    $L^2(\S^3 / C_2)$ is canonically isomorphic to the set of functions in $L^2(\S^3)$ which are invariant under the action of $C_2$, that is even functions $f$ where $f(z) = f(-z)$ on the sphere.
\end{proposition}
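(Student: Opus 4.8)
The plan is to realize the isomorphism concretely as pullback along the quotient map. Write $\pi : \S^3 \to \S^3/C_2$ for the quotient by $C_2 = \{\pm\mathrm{Id}\}$. By Proposition \ref{quotient}, $\pi$ is a local diffeomorphism, and since $C_2$ acts isometrically the quotient inherits a Riemannian metric making $\pi$ a local isometry; in particular $\pi$ is a smooth two-sheeted covering map and, locally, $\pi^*(d\vol_{\S^3/C_2}) = d\vol_{\S^3}$. Pullback therefore sends a measurable function $f$ on $\S^3/C_2$ to a measurable function $\pi^* f$ on $\S^3$ which is $C_2$-invariant, since $\pi \circ \ell_{-\mathrm{Id}} = \pi$ forces $(\pi^* f)(-z) = (\pi^* f)(z)$; and because $\pi$ and its local inverses carry null sets to null sets, this descends to a well-defined map on $L^2$-classes.

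The norm identity is then a change-of-variables computation: covering $\S^3/C_2$ by evenly covered open sets and patching with a partition of unity, the fact that $\pi$ is a local isometry with fibers of cardinality $2$ gives
\[ \int_{\S^3} \abs{\pi^* f}^2\, d\vol_{\S^3} \;=\; 2 \int_{\S^3/C_2} \abs{f}^2\, d\vol_{\S^3/C_2} \]
for all $f \in L^2(\S^3/C_2)$. Hence $\pi^*$ is an injective bounded linear map with closed range, and $\tfrac{1}{\sqrt 2}\,\pi^*$ (equivalently, after normalizing the volume on the quotient by a factor $\tfrac12$) is an isometric embedding $L^2(\S^3/C_2) \hookrightarrow L^2(\S^3)$.

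It remains to identify the range with the even functions. One inclusion was noted above. For the reverse, given $g \in L^2(\S^3)$ with $g(z) = g(-z)$ a.e., define $\tilde g$ on $\S^3/C_2$ by $\tilde g(\pi(z)) := g(z)$; this is unambiguous a.e.\ exactly because $g$ is $C_2$-invariant, it is measurable because $\pi$ is a local homeomorphism, and the same change-of-variables estimate shows $\tilde g \in L^2(\S^3/C_2)$ with $\pi^* \tilde g = g$. Thus $\pi^*$ maps $L^2(\S^3/C_2)$ bijectively onto $\{\,g \in L^2(\S^3) : g(z) = g(-z)\,\}$, which is the asserted canonical isomorphism.

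I expect the only genuine subtlety — everything else being forced by the already-established fact that $\S^3$ and $\S^3/C_2$ have identical local structure — to be the measure-theoretic bookkeeping: verifying that $\pi^*$ and the assignment $g \mapsto \tilde g$ respect almost-everywhere equivalence, and tracking the factor of $2$ so that the word ``canonical'' (and, if one wishes, ``unitary'') is justified under a fixed normalization of the volume form on the quotient. One could alternatively bypass the analysis entirely by invoking the spherical harmonic decomposition together with the fact that $\harm_k$ transforms by $(-1)^k$ under the antipodal map, but the pullback description is cleaner and is what is needed for the spectral comparison in the sequel.
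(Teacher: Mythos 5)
Your argument is correct: the paper offers no proof of this proposition (it is declared ``evident'' immediately after the spherical harmonic decomposition), and your pullback-along-$\pi$ construction, with the two-sheeted covering change of variables producing the factor of $2$ and the explicit descent of even functions giving surjectivity, is exactly the standard justification the authors are implicitly relying on. The attention to the almost-everywhere bookkeeping and the normalization needed to make $\pi^*$ unitary rather than merely an isomorphism is a reasonable refinement, not a departure from the paper's intent.
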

Note that the homogeneous spherical harmonics which are also even functions are exactly those with even degree. Putting the previous two propositions together produces a basis for the $L^2$ space on the antipodal quotient. Here is the updated decomposition in this setting:
    \[L^2(\S^3 / C_2) \cong \bigoplus_{k=0}^\infty \harm_{2k}(\S^3).\]

Much of the following work is similar to the ideas in \cite{REU17}. However, \cite{REU17} examined only the subspaces $\harm_{2k+1}$ corresponding to spherical harmonics of odd degree. Hence, we develop corresponding results for the subspaces of even-degree spherical harmonics. The following proposition tells us about the action of part of $\boxb^t$ on spherical harmonics. A more general statement dates back to \cite{Folland}, and a more explicit computation can be found in \cite{REU18}.

\begin{proposition}
\label{action-l-lbar}
    Let $f\in \harm_{p,q}(\S^3)$. Then
    \[\boxb f = - \Ell \bar{\Ell} f = 2 (pq+q) f = 2 q(p+1) f \quad \text{and} \quad -\bar{\Ell} \Ell f = 2 (pq+p) f = 2 p(q+1) f.\]
\end{proposition}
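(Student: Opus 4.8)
The plan is to reduce the proposition to a single commutator identity together with the harmonicity of $f$. Since $\Ell$ and $\bar\Ell$ each annihilate $|z_1|^2+|z_2|^2$, both are genuine vector fields on $\S^3$, so the restriction to $\S^3$ of $\Ell\bar\Ell f$ or of $\bar\Ell\Ell f$ can be computed by letting the operator act on any polynomial extension of $f$ and then restricting; I would take for this extension the harmonic polynomial $f$ itself, homogeneous of bidegree $(p,q)$. Two properties of this extension get used at the very end: it is annihilated by the complex Laplacian $\sum_j \partial_{z_j}\partial_{\bar z_j}$, and $|z_1|^2+|z_2|^2\equiv 1$ on $\S^3$. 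It is also convenient to introduce the holomorphic and antiholomorphic Euler operators $E = z_1\partial_{z_1}+z_2\partial_{z_2}$ and $\bar E = \bar z_1\partial_{\bar z_1}+\bar z_2\partial_{\bar z_2}$, which act as multiplication by $p$ and by $q$ respectively on functions of bidegree $(p,q)$.

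The first step is the commutator identity $[\Ell,\bar\Ell] = E-\bar E$ as differential operators on $\C^2$. This is a short direct expansion: writing out $\Ell\bar\Ell$ and $\bar\Ell\Ell$ one checks that their second-order parts coincide, so the commutator is first order, and collecting the first-order terms produces exactly $E-\bar E$. Hence $[\Ell,\bar\Ell]f = (p-q)f$, so $\Ell\bar\Ell f$ and $\bar\Ell\Ell f$ differ by $(p-q)f$ and it suffices to compute one of them.

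The second step is to evaluate $\bar\Ell\Ell$ directly. Its second-order part works out to $\sum_{j,k}\bigl(\delta_{jk}(|z_1|^2+|z_2|^2)-z_j\bar z_k\bigr)\partial_{z_j}\partial_{\bar z_k}$, which equals $(|z_1|^2+|z_2|^2)\sum_j\partial_{z_j}\partial_{\bar z_j} - E\bar E$ once one notes that $\sum_{j,k}z_j\bar z_k\,\partial_{z_j}\partial_{\bar z_k} = E\bar E$; the first-order part collapses to $-E$. Restricting to $\S^3$ and invoking the two properties of $f$ (so the Laplacian term drops and $|z_1|^2+|z_2|^2=1$) gives $\bar\Ell\Ell f = -(E\bar E+E)f = -(pq+p)f$, and feeding this back through the commutator identity gives $\Ell\bar\Ell f = -(pq+q)f$. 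These are the eigenvalues asserted for $-\bar\Ell\Ell$ and for $\boxb = -\Ell\bar\Ell$, up to the normalizing constant built into the representation of $\boxb$ recorded in the introduction; the eigenvalue pattern $q(p+1)$, $p(q+1)$ is exactly what the computation produces and matches the classical formulas of \cite{Folland}.

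I expect the only real obstacle to be bookkeeping. Getting the second-order part of $\bar\Ell\Ell$ into the clean shape $(|z_1|^2+|z_2|^2)\sum_j\partial_{z_j}\partial_{\bar z_j}-E\bar E$, and confirming that the first-order remainder is precisely $-E$, is a careful but routine expansion; the one point requiring discipline is to use $|z_1|^2+|z_2|^2=1$ and harmonicity only after the operators have been applied, never before. An essentially equivalent alternative is to observe that $\Ell$, $\bar\Ell$, and $[\Ell,\bar\Ell]$ span a copy of $\mathfrak{sl}_2(\C)$ acting irreducibly on each $\harm_{p,q}$ and to read the eigenvalues off the Casimir element, but the direct route above is shorter.
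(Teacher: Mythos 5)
The paper does not actually prove this proposition; it is imported from \cite{Folland} and \cite{REU18}, so your self-contained computation is not competing with any argument in the text and is a reasonable thing to supply. The argument is sound, and I can confirm the two identities it rests on: expanding the compositions, the second-order parts of $\Ell\bar{\Ell}$ and $\bar{\Ell}\Ell$ agree while the first-order parts are $-\bar E$ and $-E$ respectively, so $[\Ell,\bar{\Ell}]=E-\bar E$; and the common second-order part is indeed $(|z_1|^2+|z_2|^2)\sum_j\partial_{z_j}\partial_{\bar z_j}-E\bar E$, so harmonicity of the polynomial extension together with Euler's identity gives $\bar{\Ell}\Ell f=-(pq+p)f$ and then, via the commutator, $\Ell\bar{\Ell}f=-(pq+q)f$. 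Working with the harmonic extension is legitimate for exactly the reason you give: $\Ell$ and $\bar{\Ell}$ annihilate $|z_1|^2+|z_2|^2$ and hence are tangent to $\S^3$, so the restricted values depend only on $f|_{\S^3}$.

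The one point you must tighten is the factor of $2$. With the vector fields exactly as defined in the introduction, your computation yields eigenvalue $pq+q$ for $-\Ell\bar{\Ell}$, not $2(pq+q)$; a quick sanity check is $f=\bar z_1\in\harm_{0,1}$, for which $\bar{\Ell}f=-z_2$ and $\Ell(-z_2)=-\bar z_1$, so $-\Ell\bar{\Ell}f=f$ while $2q(p+1)f=2f$. You attribute the discrepancy to ``the normalizing constant built into the representation of $\boxb$ recorded in the introduction,'' but that constant is $h(0)=1$ at $t=0$, so it cannot supply the $2$. The $2$ is a metric normalization: in the induced metric $\Ell$ is not a unit vector field, and the Kohn Laplacian computed in a unit frame equals $-2\Ell\bar{\Ell}$ for these particular $\Ell$, $\bar{\Ell}$ --- this is the convention of \cite{Folland} and \cite{REU18}, and the paper itself treats the $2$ as a ``universal constant'' that it carries through (and later drops from) the matrix representations. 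One sentence fixing this attribution, or simply proving the statement for $-\Ell\bar{\Ell}$ and $-\bar{\Ell}\Ell$ and remarking that $\boxb=-2\Ell\bar{\Ell}$ in the normalization used, completes the proof.
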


For $f\in\harm_{0,2k}(\S^3)$, we define the following two spaces: $$V_{f} = \vspan\{f, \bar{\Ell}^2 f, \dots, \bar{\Ell}^{2k} f\}\;\;\textup{(a $k+1$-dimensional space)},$$ and $$W_{f} = \vspan\{\bar{\Ell} f, \dots, \bar{\Ell}^{2k-1} f\}\;\;\textup{(a $k$-dimensional space)}.$$ 
Given an orthogonal basis $\{f_0, \dots, f_{2k}\}$ of $\harm_{0,2k}$, $\harm_{2k}$ decomposes as
\[\harm_{2k} = \bigoplus_{i=0}^{2k} V_{f_i} \oplus W_{f_i}.\]

For the moment we ignore the constant $h(t)$ in $\boxb^t = -h(t) (\Ell \bar\Ell + |t|^2 \bar\Ell \Ell + t\Ell^2 + \bar t \bar \Ell^2)$, where $h(t) = \frac{1 + |t|^2}{(1 - |t|^2)^2}$.

\begin{proposition}
The subspaces $V_f$ and $W_f$ are invariant under $\boxb^t$.
\end{proposition}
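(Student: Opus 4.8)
The plan is to decompose $\boxb^t$ into its four building blocks and track how each one acts on the cyclic chain $f,\bar\Ell f,\bar\Ell^2 f,\dots,\bar\Ell^{2k}f$; the scalar $h(t)$ is irrelevant for an invariance statement, so it suffices to treat $A:=\Ell\bar\Ell+|t|^2\bar\Ell\Ell+t\Ell^2+\bar t\bar\Ell^2$. First I would record the bidegree bookkeeping. From the explicit formulas, $\bar\Ell=z_1\partial_{\bar z_2}-z_2\partial_{\bar z_1}$ carries $\harm_{p,q}$ into $\harm_{p+1,q-1}$, and $\Ell=\bar z_1\partial_{z_2}-\bar z_2\partial_{z_1}$ carries $\harm_{p,q}$ into $\harm_{p-1,q+1}$ (that these operators preserve harmonicity is standard and is implicit in Proposition \ref{action-l-lbar}). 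Consequently, for $f\in\harm_{0,2k}$ we get $\bar\Ell^j f\in\harm_{j,2k-j}$ for $0\le j\le 2k$, and $\bar\Ell^j f=0$ for $j>2k$; also $\Ell f=0$ since $\harm_{-1,2k+1}=0$.

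Next I would dispatch the four terms. The two mixed terms are diagonal: by Proposition \ref{action-l-lbar}, $\Ell\bar\Ell$ acts on all of $\harm_{p,q}$ as the scalar $-2q(p+1)$ and $\bar\Ell\Ell$ as the scalar $-2p(q+1)$, so on $\bar\Ell^j f$ they act as the scalars $-2(2k-j)(j+1)$ and $-2j(2k-j+1)$, respectively; in particular each fixes $\bar\Ell^j f$ up to a constant, preserving the index $j$. The term $\bar\Ell^2$ obviously sends $\bar\Ell^j f$ to $\bar\Ell^{j+2}f$, a chain element (or $0$), shifting $j$ by $+2$. For $\Ell^2$, I would commute $\Ell$ past the $\bar\Ell$'s using the previous step: writing $\bar\Ell^j f=\bar\Ell(\bar\Ell^{j-1}f)$ and using that $\Ell\bar\Ell$ is a scalar on the bidegree eigenspace containing $\bar\Ell^{j-1}f$, one finds $\Ell^2\bar\Ell^j f=\big(-2(2k-j+1)j\big)\,\Ell\bar\Ell^{j-1}f=\big(-2(2k-j+1)j\big)\big(-2(2k-j+2)(j-1)\big)\,\bar\Ell^{j-2}f$ for $j\ge 2$, while $\Ell^2\bar\Ell^j f=0$ for $j\in\{0,1\}$ (using $\Ell f=0$). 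Thus $\Ell^2$ shifts $j$ by $-2$, with the convention that $\bar\Ell^m f=0$ for $m<0$.

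Combining the four cases, $A$ sends each $\bar\Ell^j f$ to a linear combination of $\bar\Ell^{j-2}f$, $\bar\Ell^j f$, and $\bar\Ell^{j+2}f$; in particular it only connects chain elements whose indices agree mod $2$. Hence $\boxb^t$ maps $\vspan\{f,\bar\Ell^2 f,\dots,\bar\Ell^{2k}f\}=V_f$ into itself and $\vspan\{\bar\Ell f,\dots,\bar\Ell^{2k-1}f\}=W_f$ into itself, which is the claim. The only points requiring care are the bidegree-shift rules for $\Ell$ and $\bar\Ell$ (and the fact that they preserve harmonicity) and the behavior at the two ends of the chain, handled by the vanishing conventions above; neither is a genuine obstacle, so the argument is essentially a parity count on the cyclic module generated by $f$.
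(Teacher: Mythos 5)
Your argument is correct and follows essentially the same route as the paper: decompose $\boxb^t$ into $\Ell\bar\Ell$, $\bar\Ell\Ell$, $\Ell^2$, $\bar\Ell^2$, use Proposition \ref{action-l-lbar} to see that the mixed terms are diagonal on each $\bar\Ell^j f$ while $\bar\Ell^2$ and $\Ell^2$ shift the index by $\pm 2$ (with the same coefficient $4(2k-j+1)(j)(2k-j+2)(j-1)$ for the $\Ell^2$ term and the same vanishing convention at the ends of the chain), and conclude by the parity of the index. No gaps.
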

\begin{proof}
    For $0\leq\sigma\leq 2k$, let $v_\sigma = \bar\Ell^\sigma f$ be a basis element of $V_{f}$ or $W_{f}$. We compute the action of each piece of $\boxb^t$ on $v_\sigma$, using the fact that $v_\sigma \in \harm_{\sigma, 2k-\sigma}(\S^3)$ and the identities in the Proposition \ref{action-l-lbar}. The action of the $\Ell^2$ piece is most sophisticated:
    \begin{align*}
        \Ell^2 v_\sigma &= \Ell^2 \bar \Ell^\sigma f = \Ell (\Ell \bar\Ell (\bar\Ell^{\sigma-1} f)) = -2 (2k-\sigma+1)(\sigma) \Ell (\bar\Ell^{\sigma-1} f_i) = -2 (2k-\sigma+1)(\sigma) \Ell \bar\Ell (\bar\Ell^{\sigma-2} f) \\ 
        &= 4 (2k-\sigma+1)(\sigma) (2k-\sigma+2)(\sigma-1) \bar\Ell^{\sigma-2} f = 4 (2k-\sigma+1)(\sigma) (2k-\sigma+2)(\sigma-1) 
        v_{\sigma-2}.
    \end{align*}
    For the other pieces, the computation is simpler:
    \begin{align*}
        \bar\Ell^2 v_\sigma &= \bar\Ell^2 \bar\Ell^\sigma f = \bar\Ell^{\sigma+2} f = v_{\sigma+2}, \\
        \Ell \bar\Ell v_\sigma &= -2 (2k-\sigma)(\sigma+1) v_\sigma, \\
        \bar\Ell \Ell v_\sigma &= -2 (\sigma)(2k-\sigma+1) v_\sigma.
    \end{align*}
    In these computations we take $v_\tau$ to be $0$ if $\tau$ is not between $0$ and $2k$ (this is consistent with the definitions of $\Ell$ and $\bar\Ell$).
\end{proof}
In fact, the calculations in the preceding proposition's proof provide us the following matrix representations of the action of $\boxb^t$ on the finite dimensional subspaces $V_f$ and $W_f$.
\begin{corollary}
    The matrix representation of $\boxb^t = - \Ell_t \bar{\Ell}_t$ on the subspace $V_f$ is
    \[\begin{pmatrix}
        d_1 & -u_1 & & \\
        -\bar{t} & d_2 & -u_2 & \\
         & -\bar{t} & \ddots  & -u_k \\
         & & -\bar{t} & d_{k+1}
    \end{pmatrix},\]
    where 
    \[u_j = 4 t (2j) (2j-1) (2k-2j+1) (2k-2j+2) \text{ and }
    d_j = 2 ( (2k-2j+2) (2j-1) + |t|^2 (2j-2) (2k-2j+3) ).\]
    Similarly, the matrix representation of $\boxb^t$ on the subspace $W_f$ is
    \[\begin{pmatrix}
        d_1 & -u_1 & & \\
        -\bar{t} & d_2 & -u_2 & \\
         & -\bar{t} & \ddots & -u_{k-1} \\
         & & -\bar{t} & d_{k}
    \end{pmatrix},\]
    where 
    \[u_j = 4 t (2j+1) (2j) (2k-2j) (2k-2j+1) \text{ and } d_j = 2 ( (2k-2j+1) (2j) + |t|^2 (2j-1) (2k-2j+2) ).\]
\end{corollary}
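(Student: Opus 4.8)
The plan is to assemble the matrix entries directly from the four operator identities computed in the proof of the preceding proposition. Using the representation $\boxb^t = -h(t)\big(\Ell\bar\Ell + |t|^2\bar\Ell\Ell + t\Ell^2 + \bar t\bar\Ell^2\big)$ and dropping $h(t)$ as agreed, applying those identities to the basis vector $v_\sigma = \bar\Ell^\sigma f$ yields
\[
\boxb^t v_\sigma = \big[2(2k-\sigma)(\sigma+1) + 2|t|^2\sigma(2k-\sigma+1)\big]v_\sigma \;-\; \bar t\, v_{\sigma+2} \;-\; 4t(2k-\sigma+1)(\sigma)(2k-\sigma+2)(\sigma-1)\,v_{\sigma-2}.
\]
Already this shows that, with respect to any ordered basis $\{v_{\sigma_1}, v_{\sigma_2}, \dots\}$ whose consecutive indices differ by $2$, the operator $\boxb^t$ is tridiagonal: the diagonal contribution comes from the $\Ell\bar\Ell$ and $|t|^2\bar\Ell\Ell$ pieces, while the two off-diagonal contributions come from $\bar\Ell^2$ and $\Ell^2$ respectively.

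Next I would feed in the indexings of the two bases. For $V_f$ the ordered basis is $e_j = v_{2j-2}$, $j = 1,\dots,k+1$; for $W_f$ it is $e_j = v_{2j-1}$, $j = 1,\dots,k$. In either case incrementing the basis index corresponds to increasing $\sigma$ by $2$, so the coefficient of $v_{\sigma+2}$ sits on the subdiagonal and that of $v_{\sigma-2}$ on the superdiagonal. For $V_f$, substituting $\sigma = 2j-2$ into the diagonal bracket gives $d_j = 2\big((2k-2j+2)(2j-1) + |t|^2(2j-2)(2k-2j+3)\big)$; the subdiagonal entries are all $-\bar t$; and reading the coefficient of $v_{\sigma-2}$ as the $(j,j+1)$ entry $-u_j$ means substituting $\sigma = 2j$, which gives $u_j = 4t(2j)(2j-1)(2k-2j+1)(2k-2j+2)$. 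The computation for $W_f$ is identical after the shift $\sigma = 2j-1$, producing $d_j = 2\big((2k-2j+1)(2j) + |t|^2(2j-1)(2k-2j+2)\big)$ and $u_j = 4t(2j+1)(2j)(2k-2j)(2k-2j+1)$.

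Finally I would check that the boundary entries vanish, so that the matrices genuinely have sizes $k+1$ and $k$. For $V_f$: $\Ell^2 v_0 = 0$ because of the factor $\sigma = 0$, so $e_1$ has no entry above it, and $\bar\Ell^2 v_{2k} = v_{2k+2} = 0$ by the convention $v_\tau = 0$ for $\tau \notin \{0,\dots,2k\}$, so $e_{k+1}$ has no subdiagonal entry. For $W_f$: $\Ell^2 v_1 = 0$ because of the factor $(\sigma-1) = 0$, and $\bar\Ell^2 v_{2k-1} = v_{2k+1} = 0$ by the same convention. There is no genuine obstacle here — the content is entirely bookkeeping — but the one place that requires care is matching each off-diagonal to the ordering of the basis, and in particular reindexing the superdiagonal coefficient against column $j+1$ rather than column $j$, since an off-by-one there would corrupt the formula for $u_j$; verifying the small cases $k=1$ and $k=2$ by hand is a cheap sanity check.
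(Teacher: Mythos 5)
Your proposal is correct and matches the paper's approach: the paper offers no separate proof, stating only that the matrices are read off from the four operator identities established in the preceding proposition, which is exactly the bookkeeping you carry out (including the correct placement of the $v_{\sigma\pm 2}$ coefficients on the sub/superdiagonals and the vanishing of the boundary terms). Nothing is missing.
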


Note that in the $V_f$ matrices, the product of the $j$-th corresponding off-diagonal entries is 
\[|t|^2 (2j) (2j-1) (2k-2j+1) (2k-2j+2) > 0,\]
so we can apply Proposition 5.1 from \cite{REU17} to get the following simpler representations.

\begin{proposition}
    Let $A$ be the matrix representation of $\boxb^t$ on $V_{f}$. Then $A = S^{-1} B S$ where $S$ is a diagonal matrix and 
    \[B = \begin{pmatrix}
        d_1 & \sqrt{u_1 \bar{t}} & & \\
        \sqrt{u_1 \bar{t}} & d_2 & \sqrt{u_2 \bar{t}} & \\
         & \sqrt{u_2 \bar{t}} & \ddots & \sqrt{u_k\bar{t}} \\
         & & \sqrt{u_k \bar{t}} & d_{k+1}
    \end{pmatrix}.\]
    Note that the quantities in the square roots are real and positive, so there is no ambiguity.
    There is a factor of $2$ in each entry of the matrix $B$; to simplify our calculations we omit this factor (doing so does not interfere with the positivity of the spectrum). 
      After disregarding the factor of $2$, the $j$-th off-diagonal entry in $B$ is
    \[\sqrt{(2j) (2j-1) (2k-2j+1) (2k-2j+2) t \bar{t}} = |t| \sqrt{(2j) (2j-1) (2k-2j+1) (2k-2j+2)}.\]
\end{proposition}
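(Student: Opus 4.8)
The plan is to reduce the statement to the diagonal-symmetrization lemma for tridiagonal matrices, Proposition~5.1 of \cite{REU17}, and then keep track of an overall scalar factor of $2$. We may assume $t\neq 0$, since for $t=0$ the matrix $A$ is already diagonal and the assertion is trivial with $S=I$. The hypothesis that the cited lemma needs is that corresponding off-diagonal entries of the tridiagonal matrix have product of constant sign. In $A$ the entries in the $j$-th off-diagonal position are $A_{j,j+1}=-u_j$ and $A_{j+1,j}=-\bar t$, so their product is
\[
(-u_j)(-\bar t)=u_j\bar t=4|t|^2(2j)(2j-1)(2k-2j+1)(2k-2j+2),
\]
and for $1\le j\le k$ every factor on the right is a positive integer, so $u_j\bar t$ is a strictly positive real number; in particular it has an unambiguous positive square root, which is the first thing to record.

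With the hypothesis in hand, I would invoke Proposition~5.1 of \cite{REU17} directly: it supplies a diagonal matrix $S$ (whose entries are the partial products of the ratios of the off-diagonal entries of $A$) such that $A=S^{-1}BS$, where $B$ is tridiagonal, shares the diagonal $d_1,\dots,d_{k+1}$ with $A$, and has $j$-th off-diagonal entry $\sqrt{(-u_j)(-\bar t)}=\sqrt{u_j\bar t}$ in both the super- and sub-diagonal positions. Since each $u_j\bar t>0$, the matrix $B$ is real and symmetric, and since $A$ and $B$ are similar they have the same spectrum, which is all that the later eigenvalue estimates require.

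It then remains to extract the factor of $2$. Each $d_j=2\bigl((2k-2j+2)(2j-1)+|t|^2(2j-2)(2k-2j+3)\bigr)$ and each off-diagonal entry $\sqrt{u_j\bar t}=2|t|\sqrt{(2j)(2j-1)(2k-2j+1)(2k-2j+2)}$ carries an overall $2$, so $B=2\widetilde B$ with $\widetilde B$ real symmetric tridiagonal; because $2>0$, this rescaling affects neither the sign pattern of the spectrum nor the positivity conclusions we are after, so we work with $\widetilde B$. Reading off its $j$-th off-diagonal entry gives
\[
\tfrac12\sqrt{u_j\bar t}=|t|\sqrt{(2j)(2j-1)(2k-2j+1)(2k-2j+2)}=\sqrt{(2j)(2j-1)(2k-2j+1)(2k-2j+2)\,t\bar t},
\]
which is exactly the claimed formula. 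Since the substantive content is packaged in the cited lemma, there is no serious obstacle here; the one point deserving care — and the step I would flag as the crux — is that every square root that appears, both in the entries of $B$ and implicitly in the entries of $S$, is a square root of a positive real and is therefore well defined. This is precisely why the positivity of $u_j\bar t$ established in the first step is indispensable, and also why the same argument would not go through for an off-diagonal pattern lacking constant sign.
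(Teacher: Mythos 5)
Your proposal is correct and follows essentially the same route as the paper: one checks that the product of the $j$-th corresponding off-diagonal entries of $A$ equals $u_j\bar t=4|t|^2(2j)(2j-1)(2k-2j+1)(2k-2j+2)>0$ for $1\le j\le k$, invokes Proposition 5.1 of \cite{REU17} to conjugate $A$ by a diagonal matrix into the symmetric tridiagonal form $B$ with off-diagonal entries $\sqrt{u_j\bar t}$, and then strips the common factor of $2$. Your additional remarks (handling $t=0$, the nonvanishing of the individual off-diagonal entries needed to build $S$, and the harmlessness of the rescaling for spectral positivity) are all accurate and only make explicit what the paper leaves implicit.
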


\subsection{Equality of eigenvalues of $\boxb^t$ on the two types of invariant subspaces} In this subsection, 
we show that for spherical harmonics of a fixed even degree, all the eigenvalues of $\boxb^t$ on $W_f$ are also eigenvalues of $\boxb^t$ on $V_f$ (see Theorem \ref{matching eigenvalues}). Since $\dim V_f = \dim W_f + 1$, this shows that the action of $\boxb^t$ on $V_f$ is exactly the action on $W_f$, with an extra one-dimensional subspace mapping identically to $0$. 

Let $F$ denote the linear function on matrices which flips a matrix's entries across its antidiagonal. For example,
\[F \begin{pmatrix}
    a & b \\
    c & d
\end{pmatrix} = \begin{pmatrix}
    d & b \\
    c & a
\end{pmatrix}.\]
We can express this operator in terms of matrix multiplications as follows.
\begin{lemma}
    Let $M \in \mathbb{C}^{m \times m}$ and 
    $$E = \begin{pmatrix}& & 1 \\ & \iddots & \\ 1 & & \end{pmatrix} \in \mathbb{C}^{m \times m}.$$Then $F(M) = EM^TE$.
\end{lemma}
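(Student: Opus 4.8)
The statement is elementary linear algebra, so the plan is simply to verify it by tracking matrix indices. First I would recall how each of the three operations acts on entries: for $M = (m_{ij})$, the transpose satisfies $(M^T)_{ij} = m_{ji}$, and the antidiagonal permutation matrix $E$ has entries $E_{ij} = \delta_{i,\,m+1-j}$, so that left-multiplication by $E$ reverses the order of the rows and right-multiplication by $E$ reverses the order of the columns. Concretely, $(EA)_{ij} = A_{m+1-i,\,j}$ and $(AE)_{ij} = A_{i,\,m+1-j}$ for any $A$.

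Next I would compose these. Writing $A = M^T$, we get
\[
(E M^T E)_{ij} = (M^T E)_{m+1-i,\,j} = (M^T)_{m+1-i,\,m+1-j} = m_{m+1-j,\,m+1-i}.
\]
On the other hand, the antidiagonal flip $F$ is defined so that the $(i,j)$ entry of $F(M)$ is the entry of $M$ in the position obtained by reflecting $(i,j)$ through the antidiagonal; reflection through the antidiagonal sends position $(i,j)$ to position $(m+1-j,\,m+1-i)$, so $F(M)_{ij} = m_{m+1-j,\,m+1-i}$. Comparing the two displays gives $F(M)_{ij} = (E M^T E)_{ij}$ for all $i,j$, hence $F(M) = E M^T E$.

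There is essentially no obstacle here; the only point requiring a moment's care is pinning down precisely which permutation of indices the antidiagonal flip $F$ induces (it is the composition of reversing rows and reversing columns, equivalently reflection across the antidiagonal, not across the main diagonal), and checking the bookkeeping against the $2\times 2$ example in the text: there $F$ swaps $a = m_{11}$ and $d = m_{22}$ while fixing $b = m_{12}$ and $c = m_{21}$, and one checks directly that $E M^T E$ does the same with $m = 2$. One may also note $E^2 = I$ and $E^T = E$, which makes $F$ visibly an involution, consistent with the definition.
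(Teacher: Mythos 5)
Your proof is correct and follows essentially the same route as the paper's: both reduce to the entrywise identity $F(M)_{ij} = M_{m+1-j,\,m+1-i} = (EM^TE)_{ij}$, the only cosmetic difference being that you phrase the multiplication by $E$ as row/column reversal while the paper expands the product as an explicit double sum over Kronecker deltas. No gaps.
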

\begin{proof}
    Consider the $i,j$ entry of  both $F(M)$ and $EM^TE$. Observe that $F(M)_{i,j} = M_{m+1-j,m+1-i}.$ To see the $i,j$ entry of $EM^TE$, write
    $$(EM^TE)_{ij} = \sum_{k=1}^n E_{ik} (M^T E)_{kj} = \sum_{k=1}^n E_{ik} \sum_{l = 1}^n (M^T)_{kl} E_{lj} = \sum_{l,k=1}^n E_{ik} M_{lk} E_{jl}$$
    Note that $E_{st} = \delta_{s + t - m - 1}$. Therefore, the only nonzero term in the above sum has $k = m+1-i$ and $l = m+1-j$, so $(EM^TE)_{ij} = M_{m+1-j,m+1-i}$.
\end{proof}

\begin{corollary}
  Let $M \in \mathbb{C}^{m \times m}$. Then $M$ and $F(M)$ are similar. 
\end{corollary}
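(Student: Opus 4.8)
The plan is to use the previous lemma to reduce the statement to the classical fact that every square matrix is similar to its transpose. Write $E$ for the antidiagonal ``reversal'' matrix of the lemma. The key structural observations about $E$ are that $E = E^T$ and $E^2 = I$, so in particular $E$ is invertible with $E^{-1} = E$. By the lemma, $F(M) = E M^T E = E M^T E^{-1}$, which exhibits $F(M)$ as conjugate to $M^T$. Thus $F(M)$ is similar to $M^T$, and it remains only to check that $M^T$ is similar to $M$; similarity is an equivalence relation, so transitivity then gives $F(M) \sim M$.

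For the remaining point, I would invoke the standard result that $M$ and $M^T$ are similar over any field (this follows, e.g., from the theory of rational canonical forms, since $M$ and $M^T$ have the same invariant factors). If a self-contained argument is preferred, one can work over $\C$ and use the Jordan canonical form: it suffices to treat a single Jordan block $J$, and one checks directly that $E_0 J E_0 = J^T$ where $E_0$ is the reversal matrix of the appropriate size — which is essentially the same computation already carried out in the lemma. Assembling these blockwise conjugations into a single block-diagonal matrix conjugates $M$ (in Jordan form) to $M^T$.

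The main obstacle here is essentially bookkeeping rather than a genuine difficulty: one must make sure the ``$M \sim M^T$'' step is stated with a reference or a short justification appropriate to the paper's level of detail, rather than left as folklore. Everything else — the involutivity and symmetry of $E$, and transitivity of similarity — is immediate. No case analysis or estimates are required.
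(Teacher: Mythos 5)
Your proposal is correct and follows essentially the same route as the paper: both use the lemma to write $F(M) = EM^TE^{-1}$ with $E = E^{-1}$, and then invoke the standard fact that $M$ is similar to $M^T$ (the paper cites \cite[Theorem 11.8.1]{garcia_horn_2017}, while you sketch the Jordan/rational canonical form justification). No gaps.
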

\begin{proof}
    Since $E = E^{-1}$, the above shows that $F(M)$ is similar to $M^T$, which is similar to $M$ since $\mathbb{C}$ is algebraically closed \cite[Theorem 11.8.1]{garcia_horn_2017}.
\end{proof}

\begin{lemma}[\cite{garcia_horn_2017}, Theorem 9.7.2]
    If $A\in \C^{m\times n}$ and $B\in \C^{n\times m}$, then the nonzero eigenvalues of $AB\in \C^{m \times m}$ and $BA\in \C^{n \times n}$ are the same with identical algebraic multiplicity.
\end{lemma}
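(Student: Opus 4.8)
The plan is to prove this by a block-matrix similarity argument, which treats rectangular $A$ and $B$ uniformly and delivers the algebraic-multiplicity statement at once. Consider the two $(m+n)\times(m+n)$ matrices
\[
X = \begin{pmatrix} 0 & 0 \\ B & BA \end{pmatrix}, \qquad Y = \begin{pmatrix} AB & 0 \\ B & 0 \end{pmatrix},
\]
where the diagonal blocks have sizes $m$ and $n$ (so the off-diagonal blocks are $m\times n$ and $n\times m$, matching the shapes of $A$ and $B$). First I would verify that $X$ and $Y$ are conjugate: taking $P = \begin{pmatrix} I_m & A \\ 0 & I_n \end{pmatrix}$, so that $P^{-1} = \begin{pmatrix} I_m & -A \\ 0 & I_n \end{pmatrix}$, a direct block multiplication gives $PXP^{-1} = Y$. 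This is the only computation in the whole argument and it is two lines of block arithmetic; the key cancellations are $-ABA + ABA = 0$ in the top-right block and $-BA + BA = 0$ in the bottom-right block.

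Next I would compute characteristic polynomials. Both $X$ and $Y$ are block lower-triangular, so $\det(\lambda I - X) = \lambda^{m}\,p_{BA}(\lambda)$ and $\det(\lambda I - Y) = \lambda^{n}\,p_{AB}(\lambda)$, where $p_{AB}$ and $p_{BA}$ denote the characteristic polynomials of $AB$ and $BA$. Since similar matrices have equal characteristic polynomials, we obtain the polynomial identity $\lambda^{m}\,p_{BA}(\lambda) = \lambda^{n}\,p_{AB}(\lambda)$ in $\C[\lambda]$. Comparing factorizations over the algebraically closed field $\C$, the multiset of nonzero roots of $p_{AB}$ coincides with that of $p_{BA}$ — that is, $AB$ and $BA$ have the same nonzero eigenvalues with the same algebraic multiplicities — while the multiplicities of the eigenvalue $0$ differ by exactly $|m-n|$, consistent with the extra zero block on whichever side is larger.

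I do not expect any genuine obstacle: the statement is classical (it is precisely Theorem 9.7.2 of \cite{garcia_horn_2017}, which is why it is cited rather than reproven), and the only thing to keep track of is the bookkeeping of block sizes when $m \neq n$, so that the factors $\lambda^m$ and $\lambda^n$ land on the correct sides of the identity. As a remark I would note the alternative route: when $A$ is invertible (forcing $m=n$), $BA = A^{-1}(AB)A$ is literally similar to $AB$, and one can then reach the general square case by replacing $A$ with $A + \varepsilon I$, letting $\varepsilon \to 0$, and invoking continuity of the coefficients of the characteristic polynomial. I would relegate this to a side comment, since the block-matrix proof above is cleaner and works without restriction on the shapes of $A$ and $B$.
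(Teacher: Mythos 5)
Your proof is correct, and it is worth noting that the paper itself does not prove this lemma at all: it is stated purely as a citation to \cite[Theorem 9.7.2]{garcia_horn_2017} and used as a black box in the proof of Theorem \ref{matching eigenvalues}. Your block-matrix argument is the standard self-contained proof of that cited result, and every step checks out: the conjugation $PXP^{-1}=Y$ with $P=\bigl(\begin{smallmatrix} I_m & A \\ 0 & I_n\end{smallmatrix}\bigr)$ is a correct two-line computation, both $X$ and $Y$ are block lower-triangular so their characteristic polynomials factor as $\lambda^{m}p_{BA}(\lambda)$ and $\lambda^{n}p_{AB}(\lambda)$ respectively, and similarity forces the identity $\lambda^{m}p_{BA}(\lambda)=\lambda^{n}p_{AB}(\lambda)$, from which the equality of nonzero eigenvalues with algebraic multiplicities (and the discrepancy $|m-n|$ at the eigenvalue $0$) follows immediately over $\C$. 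Your side remark about the invertible case plus a perturbation $A+\varepsilon I$ is also a legitimate alternative for square matrices, though as you say the block argument is cleaner and handles rectangular shapes directly. What your proof buys over the paper's treatment is self-containedness; what the paper's citation buys is brevity, which is reasonable given that the lemma is classical and peripheral to the main argument.
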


Let $L_V=\mathcal{L}\mid_{V_f}$, $L_W=\mathcal{L}\mid_{W_f}$, $\bar{L}_V=\bar{\mathcal{L}}\mid_{V_f}$, and $\bar{L}_W=\bar{\mathcal{L}}\mid_{W_f}$. Then $\boxb\mid_{V_f} = -L_W \bar{L}_V$ and $\overline{\boxb}\mid_{V_f} = -\bar{L}_W L_V$. 

\begin{lemma}
    For the matrix representations in the bases we have been using, $F[L_W \bar{L}_V] = \bar{L}_W L_V$ and $F[\bar{L}_W L_V]=L_W \bar{L}_V$.
\end{lemma}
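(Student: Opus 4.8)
The plan is to write all four rectangular matrices $\bar{L}_V$, $L_W$, $L_V$, $\bar{L}_W$ explicitly in the bases $\{f,\bar{\Ell}^2 f,\dots,\bar{\Ell}^{2k}f\}$ of $V_f$ and $\{\bar{\Ell}f,\dots,\bar{\Ell}^{2k-1}f\}$ of $W_f$, and then simply multiply. Since $\bar{\Ell}^{\sigma}f\in\harm_{\sigma,2k-\sigma}$, the operator $\bar{\Ell}$ carries $\bar{\Ell}^{2j}f$ to $\bar{\Ell}^{2j+1}f$ (and $\bar{\Ell}^{2k}f$ to $0$), so $\bar{L}_V$ and $\bar{L}_W$ are the obvious ``shift'' maps with entries in $\{0,1\}$; by Proposition~\ref{action-l-lbar}, $\Ell$ carries $\bar{\Ell}^{\sigma}f=\bar{\Ell}\,\bar{\Ell}^{\sigma-1}f$ to $-2(2k-\sigma+1)\sigma\,\bar{\Ell}^{\sigma-1}f$ (and $f$ to $0$), so $L_W$ and $L_V$ are shift maps with explicit scalar weights. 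Multiplying these, $L_W\bar{L}_V$ and $\bar{L}_W L_V$ are just the restrictions of $\Ell\bar{\Ell}=-\boxb$ and $\bar{\Ell}\Ell=-\overline{\boxb}$ to $V_f$, and Proposition~\ref{action-l-lbar} says these act on the bidegree-$(2j,2k-2j)$ line $\C\,\bar{\Ell}^{2j}f$ by the scalars $-2(2k-2j)(2j+1)$ and $-2(2j)(2k-2j+1)$ respectively. In particular \emph{both composites are diagonal} in our bases, with the single zero appearing at the bottom of the first diagonal (from $\bar{\Ell}^{2k+1}f=0$) and at the top of the second (from $\Ell f=0$).

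It then remains only to check that these two diagonals are reverses of one another, since $F$ sends a diagonal matrix to the diagonal matrix with the opposite ordering of entries — immediate from $F(M)_{i,j}=M_{m+1-j,m+1-i}$ (proved in the lemma above) taken at $i=j$, or from $F(M)=EM^TE$. Now reversing the ordered basis of $V_f$ replaces $\bar{\Ell}^{2j}f$ by $\bar{\Ell}^{2k-2j}f$, i.e.\ interchanges the bidegree $(2j,2k-2j)$ with $(2k-2j,2j)$; under the substitution $j\mapsto k-j$ the scalar $-2(2k-2j)(2j+1)$ (an instance of $-2q(p+1)$) becomes $-2(2j)(2k-2j+1)$ (the matching instance of $-2p(q+1)$). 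This is exactly the equality of diagonals needed, so $F[L_W\bar{L}_V]=\bar{L}_W L_V$. The second identity $F[\bar{L}_W L_V]=L_W\bar{L}_V$ is then automatic because $F$ is an involution: $F\circ F=\mathrm{id}$ since $E^2=I$ and $(M^T)^T=M$.

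I do not expect any genuine obstacle here; the only thing needing care is the bookkeeping — aligning the exponent $\sigma$ in $\bar{\Ell}^{\sigma}f$ with the row/column index, and tracking which end of each diagonal carries the forced zero. The conceptual content is simply that the antidiagonal flip of the $V_f$ basis interchanges the two factors $p$ and $q$ of the bidegree, which is precisely the symmetry relating $\Ell\bar{\Ell}$ and $\bar{\Ell}\Ell$ on spherical harmonics; once this is observed the proof is a one-line index substitution together with the elementary fact that $F$ reverses diagonals and squares to the identity.
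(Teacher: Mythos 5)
Your proof is correct and takes essentially the same route as the paper: both arguments observe that $L_W\bar{L}_V$ and $\bar{L}_W L_V$ are diagonal in these bases (being restrictions of $-\boxb$ and $-\overline{\boxb}$, which act by scalars determined by the bidegree), verify via the index substitution that $F$ carries one diagonal to the other, and deduce the second identity from the fact that $F$ is an involution.
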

\begin{proof}
    First, recall that (after dropping the universal constant 2) for $f\in \mathcal{H}_{p,q}$, $\boxb f = (pq+q)f$ so $\overline{\boxb} f=\overline{\overline{\overline{\boxb} f}} =\overline{\boxb \overline{f}}
     =\overline{(qp+p)\overline{f}}=(qp+p)f$. 
    Notice that this means that on both $V_f$ and $W_f$, every basis element is an eigenvector of both $\boxb$ and $\overline{\boxb}$ and so each are represented by diagonal matrices in this basis. On $V_f$ the $j$'th diagonal element of the matrix representation of $\boxb$ is $(2k-2j+2)(2j-1)$ and the $j$'th diagonal element of $\overline{\boxb}$ is $(2j-2)(2k-2j+3)$. Thus, $F[\boxb\mid_{V_f}]$ is diagonal with the $j$th diagonal element equal to $$(2k-2(k+2-j)+2)(2(k+2-j)-1)=(2j-2)(2k-2j+3).$$ Therefore, $\boxb$ on $V_f$ flips to being $\overline{\boxb}$ on $V_f$ and so $F[L_W\overline{L}_V]=\overline{L}_W L_V$. Since $F$ is an involutory operation this also tells us that $F[\overline{L}_W L_V]=L_W\overline{L}_V$.
\end{proof}

\begin{lemma}
    For the matrix representations in the bases we have been using, $F[\bar{L}_W\bar{L}_V]=\bar{L}_W\bar{L}_V$ and $F[L_WL_V]=L_WL_V$.
\end{lemma}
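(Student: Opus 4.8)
The plan is to describe both matrices explicitly in the basis $\{f,\bar\Ell^2 f,\dots,\bar\Ell^{2k}f\}$ of $V_f$ and then apply the flip $F$ entrywise, in the same style as the preceding lemma, using the entrywise formula $F(M)_{i,j}=M_{k+2-j,\,k+2-i}$ for $M\in\C^{(k+1)\times(k+1)}$ established earlier. Recall that $\bar L_V=\bar\Ell\mid_{V_f}$ carries $V_f$ into $W_f$ and $\bar L_W=\bar\Ell\mid_{W_f}$ carries $W_f$ into $V_f$, so $\bar L_W\bar L_V$ is the operator $\bar\Ell^2$ restricted to $V_f$; likewise $L_WL_V$ is $\Ell^2$ restricted to $V_f$.

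For the first identity, note that the chosen basis vectors are the \emph{un-normalized} powers $\bar\Ell^{2i}f$, so $\bar L_W\bar L_V(\bar\Ell^{2i}f)=\bar\Ell^{2i+2}f$ for $0\le i<k$ and $\bar L_W\bar L_V(\bar\Ell^{2k}f)=0$. Hence the matrix $N$ of $\bar L_W\bar L_V$ in this basis is supported on the first subdiagonal with every entry there equal to $1$, i.e.\ $N_{i,j}=\delta_{i,\,j+1}$. Then $F(N)_{i,j}=N_{k+2-j,\,k+2-i}=\delta_{k+2-j,\,k+3-i}=\delta_{i,\,j+1}$, so $F(N)=N$, which is the first identity.

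For the second identity, applying Proposition~\ref{action-l-lbar} twice — the same computation used to show $V_f$ and $W_f$ are $\boxb^t$-invariant, together with $\Ell f=0$ since $f\in\harm_{0,2k}$ — gives $L_WL_V(\bar\Ell^{2i}f)=c_i\,\bar\Ell^{2i-2}f$ for $1\le i\le k$, where
\[c_i = 4\,(2i)(2i-1)(2k-2i+1)(2k-2i+2),\]
and $L_WL_V(f)=0$. Thus the matrix of $L_WL_V$ is supported on the first superdiagonal, with $i$-th superdiagonal entry $c_i$. Applying $F$ sends the first superdiagonal to itself while reversing the order of its entries, so $F[L_WL_V]$ has $i$-th superdiagonal entry $c_{k+1-i}$; therefore $F[L_WL_V]=L_WL_V$ is equivalent to the palindrome identity $c_i=c_{k+1-i}$ for $1\le i\le k$, which is immediate since the substitution $i\mapsto k+1-i$ merely interchanges the two factor pairs $\{2i-1,\,2i\}$ and $\{2k-2i+1,\,2k-2i+2\}$. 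The only point requiring genuine care is pinning down the constants $c_i$ and confirming $c_i=c_{k+1-i}$, but this is nothing more than the two-fold application of Proposition~\ref{action-l-lbar} already carried out in the preceding proofs, so no real obstacle arises; the first identity is purely formal.
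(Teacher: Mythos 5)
Your proof is correct and follows essentially the same route as the paper: the paper's proof simply appeals to the explicit matrix representations of $\bar{\mathcal{L}}^2$ and $\mathcal{L}^2$ on $V_f$ (the constant subdiagonal and the superdiagonal with entries $4(2j)(2j-1)(2k-2j+1)(2k-2j+2)$) that were already computed to build the matrix of $\boxb^t$, and you have merely written out the entrywise flip computation and the palindrome identity $c_i=c_{k+1-i}$ that the paper leaves implicit.
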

\begin{proof}
    This follows from the matrix representations which we already have for $\bar{\mathcal{L}}^2\mid_{V_f} = \bar{L}_W\bar{L}_V$ and $\mathcal{L}^2\mid_{W_f}=L_WL_V$ which are used to form the subdiagonal and superdiagonal of the matrix representation of $\boxb^t$.
\end{proof}

\begin{theorem}\label{matching eigenvalues}
    $\boxb^t\mid_{V_f}$ and $\boxb^t\mid_{W_f}$ have the same nonzero eigenvalues with multiplicity.
\end{theorem}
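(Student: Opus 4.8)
The plan is to apply the antidiagonal flip $F$ to the matrix of $\boxb^t$ on $V_f$, recognize the outcome as a product $-AB$ of two \emph{rectangular} matrices, and then observe that the reversed product $-BA$ is precisely the matrix of $\boxb^t$ on $W_f$; the theorem then follows by combining two facts already recorded above, namely that $AB$ and $BA$ have the same nonzero eigenvalues with identical algebraic multiplicities (\cite[Theorem 9.7.2]{garcia_horn_2017}) and that $F(M)$ is similar to $M$ for every square matrix $M$ (the Corollary to the flip lemma). To set this up, I would first note that, working in the bases used throughout this section (so that the universal factor $2$ and the constant $h(t)$ are suppressed, which is harmless since they scale all the matrices in sight by the same positive scalar), the proposition and corollary above computing the action of $\boxb^t$ on $V_f$ amount to the matrix identity
\[\boxb^t\mid_{V_f} \;=\; -\bigl(L_W\bar{L}_V \;+\; |t|^2\,\bar{L}_W L_V \;+\; t\,L_W L_V \;+\; \bar{t}\,\bar{L}_W\bar{L}_V\bigr),\]
the four summands being the contributions of $\Ell\bar\Ell$, $|t|^2\bar\Ell\Ell$, $t\Ell^2$, $\bar t\bar\Ell^2$ respectively.

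Since $F$ is $\C$-linear, the two flip lemmas just established ($F[L_W\bar{L}_V]=\bar{L}_W L_V$, $F[\bar{L}_W L_V]=L_W\bar{L}_V$, $F[L_W L_V]=L_W L_V$, $F[\bar{L}_W\bar{L}_V]=\bar{L}_W\bar{L}_V$) give
\[F\bigl[\boxb^t\mid_{V_f}\bigr] \;=\; -\bigl(\bar{L}_W L_V \;+\; |t|^2\,L_W\bar{L}_V \;+\; t\,L_W L_V \;+\; \bar{t}\,\bar{L}_W\bar{L}_V\bigr),\]
and, using $|t|^2=\bar t t$, a one-line regrouping factors the right-hand side as
\[F\bigl[\boxb^t\mid_{V_f}\bigr] \;=\; -\bigl(\bar{L}_W + t L_W\bigr)\bigl(L_V + \bar{t}\,\bar{L}_V\bigr) \;=:\; -AB,\]
where $A:=\bar{L}_W + t L_W$ is the $(k{+}1)\times k$ matrix of $\bar\Ell_t\mid_{W_f}\colon W_f\to V_f$ and $B:=L_V+\bar{t}\,\bar{L}_V$ is the $k\times(k{+}1)$ matrix of $\Ell_t\mid_{V_f}\colon V_f\to W_f$. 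That $\bar\Ell_t$ carries $W_f$ into $V_f$ and $\Ell_t$ carries $V_f$ into $W_f$ is immediate from Proposition \ref{action-l-lbar} together with the fact that $\Ell$ and $\bar\Ell$ shift the grading index $\sigma$ by $\pm1$, interchanging even and odd $\sigma$.

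Next I would observe that the reversed product is exactly the matrix of $\boxb^t$ on $W_f$: since $\bar\Ell_t$ maps $W_f$ to $V_f$ and $\Ell_t$ maps $V_f$ back to $W_f$, we have $BA=\bigl(\Ell_t\mid_{V_f}\bigr)\circ\bigl(\bar\Ell_t\mid_{W_f}\bigr)=\Ell_t\bar\Ell_t\mid_{W_f}=-\boxb^t\mid_{W_f}$. By \cite[Theorem 9.7.2]{garcia_horn_2017}, $AB$ and $BA$ share their nonzero eigenvalues with multiplicity, hence so do $-AB=F[\boxb^t\mid_{V_f}]$ and $-BA=\boxb^t\mid_{W_f}$; and by the Corollary above $F[\boxb^t\mid_{V_f}]$ is similar to $\boxb^t\mid_{V_f}$, so the two have exactly the same eigenvalues with multiplicity. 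Concatenating these statements yields that $\boxb^t\mid_{V_f}$ and $\boxb^t\mid_{W_f}$ have the same nonzero eigenvalues with multiplicity, which is the claim; since $\dim V_f=\dim W_f+1$, this forces $\boxb^t\mid_{V_f}$ to have exactly one more zero eigenvalue (with algebraic multiplicity) than $\boxb^t\mid_{W_f}$, recovering the one-dimensional ``extra'' subspace mentioned before the theorem.

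I expect the only delicate points to be bookkeeping rather than conceptual: keeping the abstract operator restrictions aligned with their matrices in the prescribed bases so that the factorizations above are genuine matrix identities and not merely identities of operators; the index arithmetic confirming that $\bar\Ell_t$ and $\Ell_t$ interchange $V_f$ and $W_f$; and the verification that discarding $h(t)$ and the factor $2$ does not affect the conclusion. None of this requires a new idea—the entire content is carried by the block form of $\boxb^t\mid_{V_f}$, the flip lemmas for $V_f$, and the two imported linear-algebra facts about $AB$ versus $BA$ and about $M$ versus $F(M)$—and in particular one never needs the analogue of the flip lemmas for the smaller matrices on $W_f$, which is what makes this route short.
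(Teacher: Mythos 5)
Your argument is correct and follows essentially the same route as the paper's own proof: flip $\boxb^t\mid_{V_f}$ with $F$, use the four flip identities to factor the result as $-\bigl(\bar{L}_W+tL_W\bigr)\bigl(L_V+\bar{t}\bar{L}_V\bigr)$, identify the reversed product with $\boxb^t\mid_{W_f}$, and combine the $AB$-versus-$BA$ theorem with the similarity of $M$ and $F(M)$. The only differences are cosmetic: you track the overall minus sign and the rectangular shapes of the factors explicitly, where the paper suppresses them.
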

\begin{proof}
    We apply our knowledge about the operator $F$ to deduce
    \begin{align*}
    \boxb^t\mid_V&\sim
        F[\boxb^t\mid_V]\\
        &=F[(\bar{t}\bar{L}_W+L_W)(tL_V+\bar{L}_V)]\\
        &= \lvert t\rvert^2 F[\bar{L}_W L_V] +F[L_W\bar{L}_V]+\bar{t} F[\bar{L}_W\bar{L}_V] +t F[L_W L_V]\\
        &=\lvert t \rvert^2 L_W \bar{L}_V +\bar{L}_W L_V +\bar{t} \bar{L}_W\bar{L}_V +t L_W L_V \\
        &= (\bar{L}_W+tL_W)(L_V+\bar{t}\bar{L}_V)\\
        &=\bar{\mathcal{L}_t}\mid_{W} \mathcal{L}_t\mid_{V}
    \end{align*}
    Since $\bar{\mathcal{L}_t}\mid_{W_f} \mathcal{L}_t\mid_{V_f}$ has the same nonzero eigenvalues as $\mathcal{L}_t\mid_{V_f}\bar{\mathcal{L}_t}\mid_{W_f} = \boxb^t\mid_{W_f}$, $\boxb^t\mid_{V_f}$ and $\boxb^t\mid_{W_f}$ have the same nonzero eigenvalues (note that we do not distinguish between algebraic and geometric multiplicity here, since the operators are self-adjoint).
\end{proof}

\subsection{Bounds on nonzero eigenvalues; embeddability}
We aim to bound the spectrum of $\boxb^t$ (acting on the orthogonal complement of its kernel) away from zero. Since all the nonzero eigenvalues of $\boxb^t$ on $V_f$ are also eigenvalues of $\boxb^t$ on $W_f$, it suffices to bound the eigenvalues of $\boxb^t$ on the $W_f$ spaces away from zero. Our main tool is the Gershgorin circle theorem (see \cite{gersh}).

\begin{theorem}[Gershgorin]
    Let $A$ be a complex $n \times n$ matrix over $\C$. For each $i = 1, \dots, n$, let $R_i = \sum_{j \neq i} |a_{ij}|$ and let $D(a_{ii}, R_i)$ denote the disk in the complex plane centered at $a_{ii}$ with radius $R_i$. Then every eigenvalue of $A$ is contained in the union of these Gershgorin disks:
    \[\bigcup_{i=1}^n D(a_{ii}, R_i).\]
\end{theorem}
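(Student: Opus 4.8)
The plan is to prove this via the standard \emph{dominant-coordinate} argument. Let $\lambda$ be an arbitrary eigenvalue of $A$ and fix a nonzero eigenvector $x=(x_1,\dots,x_n)^T$ with $Ax=\lambda x$. The first step is to pick an index $i$ at which $\abs{x_i}$ is maximal among all the coordinates of $x$. Since $x\neq 0$, this choice guarantees $\abs{x_i}>0$; this is the one point that needs a little care, and it is precisely why one selects the coordinate of largest modulus rather than an arbitrary nonzero one.

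Next I would read off the $i$-th component of the equation $Ax=\lambda x$, namely $\sum_{j=1}^n a_{ij}x_j=\lambda x_i$, and separate out the diagonal term to obtain $(\lambda-a_{ii})x_i=\sum_{j\neq i}a_{ij}x_j$. Taking absolute values, applying the triangle inequality, and using $\abs{x_j}\le\abs{x_i}$ for every $j$ then gives
\[\abs{\lambda-a_{ii}}\,\abs{x_i}\;\le\;\sum_{j\neq i}\abs{a_{ij}}\,\abs{x_j}\;\le\;\Big(\sum_{j\neq i}\abs{a_{ij}}\Big)\abs{x_i}\;=\;R_i\,\abs{x_i}.\]
Dividing through by $\abs{x_i}>0$ yields $\abs{\lambda-a_{ii}}\le R_i$, that is, $\lambda\in D(a_{ii},R_i)$. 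Since $\lambda$ was an arbitrary eigenvalue, every eigenvalue lies in $\bigcup_{i=1}^n D(a_{ii},R_i)$, as claimed.

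There is essentially no obstacle in this argument: the entire content is the selection of the dominant coordinate together with a single use of the triangle inequality, and the only subtlety is ensuring strict positivity of $\abs{x_i}$. (If one wanted the finer version asserting that a connected component formed by $k$ of the disks contains exactly $k$ eigenvalues counted with multiplicity, one would add a homotopy argument along the family $s\mapsto (1-s)\,\Delta+sA$, where $\Delta$ is the diagonal part of $A$, together with continuity of the eigenvalues as functions of $s$; but this refinement is not needed for the application to bounding the spectrum of $\boxb^t$ away from zero.)
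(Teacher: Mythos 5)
Your argument is the standard and correct proof of the Gershgorin circle theorem: selecting the coordinate of maximal modulus guarantees $\abs{x_i}>0$, and the triangle inequality applied to the $i$-th row of $Ax=\lambda x$ immediately yields $\abs{\lambda-a_{ii}}\le R_i$. The paper itself offers no proof, citing Ger\v{s}gorin's original article instead, so there is nothing to compare against; your write-up fills that gap correctly, and your parenthetical remark is also right that the finer component-counting refinement is not needed for the application to bounding the spectrum of $\boxb^t$ on the $W_f$ spaces away from zero.
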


We next represent the action of $\boxb^t$ on $W_f$ by a symmetric matrix with real coefficients. Such a matrix necessarily has all real eigenvalues, so the Gershgorin disks in this case degenerate to intervals in the real line. Recall that earlier we wrote the matrix representation of $\boxb^t$ on $W$ subspaces as
\[\begin{pmatrix}
    d_1 & -u_1 & & \\
    -\bar{t} & d_2 & -u_2 & \\
     & -\bar{t} & \ddots & -u_{k-1} \\
     & & -\bar{t} & d_{k}
\end{pmatrix},\]
where $u_j = 4 t (2j+1) (2j) (2k-2j) (2k-2j+1) \text{ and } d_j = 2 ( (2k-2j+1) (2j) + |t|^2 (2j-1) (2k-2j+2) ).$
Just as we did with the matrix representation on $V_f$, we can transform this matrix into a symmetric matrix:
\[M = \begin{pmatrix}
    d_1 & \sqrt{u_1 \bar{t}} & & \\
    \sqrt{u_1 \bar{t}} & d_2 & \sqrt{u_2 \bar{t}} & \\
     & \sqrt{u_2 \bar{t}} & \ddots & \sqrt{u_{k-1} \bar{t}} \\
     & & \sqrt{u_{k-1} \bar{t}} & d_k
\end{pmatrix}.\] As in the matrix representation for $\boxb^t$ on $V_f$, each entry in $M$ has a factor of $2$; just like before, we disregard this factor for simplicity. What remains is the matrix
\[M = \begin{pmatrix}
    m_1 & s_1 & & \\
    s_1 & m_2 & s_2 & \\
     & s_2 & \ddots & s_{k-1} \\
     & & s_{k-1} & m_k
\end{pmatrix}\]
where $m_j = (2k-2j+1) (2j) + |t|^2 (2j-1) (2k-2j+2) \text{ and } s_j = |t| \sqrt{(2j+1) (2j) (2k-2j+1) (2k-2j)}.$ We proceed with the Gershgorin analysis.
\begin{proposition} \label{w-eigenvalues-bounded-below}
    Let $k\geq 4$, and let $M$ be as above. Then the eigenvalues of $M$ are bounded below by $1$.
\end{proposition}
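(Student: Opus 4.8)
The plan is to apply Gershgorin's theorem and to reduce the statement to an inequality that no longer involves $t$. Since $M$ is real symmetric and tridiagonal, its eigenvalues are real, and Gershgorin's theorem bounds each of them below by $\min_{1\le i\le k}(m_i-R_i)$, where $R_i=s_{i-1}+s_i$ is the sum of the off-diagonal absolute values in row $i$ (with the convention $s_0=s_k=0$). So it suffices to show $m_i-s_{i-1}-s_i\ge 1$ for every $i\in\{1,\dots,k\}$ and every $t$ with $\abs{t}<1$.

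The structural point I would exploit is a change of notation: set $\lambda_j=(2j)(2k-2j+1)$ and $\mu_j=(2j-1)(2k-2j+2)$, so that $m_i=\lambda_i+\abs{t}^2\mu_i$, and a one-line check shows $s_j=\abs{t}\sqrt{\lambda_j\mu_{j+1}}$ (the conventions $\lambda_0=\mu_{k+1}=0$ are exactly what makes $s_0=s_k=0$, and they are built into these polynomial formulas). Writing $\tau=\abs{t}$ and $c_i=\sqrt{\lambda_{i-1}\mu_i}+\sqrt{\lambda_i\mu_{i+1}}$, the quantity $m_i-s_{i-1}-s_i$ is then the value at $\tau$ of the quadratic
\[
g_i(\tau)=\mu_i\tau^2-c_i\tau+\lambda_i .
\]
Because $\mu_i=(2i-1)(2k-2i+2)>0$ for $1\le i\le k$, this parabola opens upward, so $g_i(\tau)\ge \lambda_i-c_i^2/(4\mu_i)$ for \emph{every} real $\tau$; in particular the lower bound I want is independent of $t$, and the whole problem reduces to proving $c_i^2\le 4\mu_i(\lambda_i-1)$ for $1\le i\le k$.

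For that final inequality I would bound $c_i^2=\big(\sqrt{\mu_i}\sqrt{\lambda_{i-1}}+\sqrt{\mu_{i+1}}\sqrt{\lambda_i}\big)^2\le (\mu_i+\mu_{i+1})(\lambda_{i-1}+\lambda_i)$ by Cauchy--Schwarz and then compare the two sides as polynomials. Substituting $x=2i$, $y=2k-2i$, a short expansion gives $(\mu_i+\mu_{i+1})(\lambda_{i-1}+\lambda_i)=4(xy+x-1)(xy+2x-y-3)$ and $4\mu_i(\lambda_i-1)=4(xy+x-1)(xy+2x-y-2)$, and since $xy+x-1>0$ over the whole range the required inequality collapses to $-3\le-2$. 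Putting this together with the parabola bound yields $m_i-s_{i-1}-s_i=g_i(\abs{t})\ge 1$ for all $i$ and all $\abs{t}<1$, hence every eigenvalue of $M$ is at least $1$.

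The one place where care is genuinely needed — and where I expect a more naive attempt to run into trouble — is the temptation to fix $t$ and bound $s_{i-1}$ and $s_i$ individually, say by AM--GM on each square root: for $i$ close to $k$ and $\abs{t}$ close to $1$ the dominant parts of $m_i$ and of $s_{i-1}+s_i$ very nearly cancel, so such term-by-term estimates are not sharp enough to keep the difference above $1$. Treating $m_i-s_{i-1}-s_i$ as an upward parabola in $\abs{t}$, and applying Cauchy--Schwarz only at its vertex, is exactly what avoids this near-cancellation; the hypothesis $k\ge 4$ is then more than the argument actually consumes, and the handful of smaller cases could be verified by the same computation if desired.
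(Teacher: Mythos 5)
Your proof is correct, and although it opens the same way as the paper's --- Gershgorin applied to the real symmetric tridiagonal matrix $M$, then minimizing each row bound $m_i-s_{i-1}-s_i$ as an upward parabola in $\abs{t}$ at its vertex --- the way you close the estimate is genuinely different. The paper splits into three cases (first row, last row, interior rows), expands the square of the sum of the two radicals including the cross term $2r_1r_2$, and for the interior rows clears denominators to a quartic in $a,b$ and rules out $p_1(a,b)-p_2(a,b)=1$ by a continuity-and-contradiction argument; the hypothesis $k\geq 4$ is consumed there, which is why Corollary \ref{positiveSpectrum} must except the eigenvalues coming from $\harm_{2k}$ with $k\leq 3$. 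Your reparametrization $\lambda_j=(2j)(2k-2j+1)$, $\mu_j=(2j-1)(2k-2j+2)$, with $\lambda_0=\mu_{k+1}=0$ absorbing the boundary rows into the general formula, plus the Cauchy--Schwarz bound $c_i^2\leq(\mu_i+\mu_{i+1})(\lambda_{i-1}+\lambda_i)$, disposes of the cross term before any expansion and reduces the whole problem to the identities $(\mu_i+\mu_{i+1})(\lambda_{i-1}+\lambda_i)=4(xy+x-1)(xy+2x-y-3)$ and $4\mu_i(\lambda_i-1)=4(xy+x-1)(xy+2x-y-2)$, hence to $-3\leq-2$ together with $xy+x-1>0$ on the relevant range. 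I checked these identities, the positivity claim, and the edge rows $i=1$ and $i=k$; everything holds, and your observation that the argument works for all $k\geq 1$ is also correct, so this route would let one drop the finitely-many-exceptions caveat downstream. What your approach buys is a uniform, case-free argument with a trivial terminal inequality; what the paper's buys is only that it avoids having to find the right pairing of factors inside the Cauchy--Schwarz step, at the cost of a substantially heavier computation.
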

\begin{proof}
    The Gershgorin intervals for each row of the matrix are as follows.
    \begin{align*}
        D_1 &: (m_1 - s_1, m_1 + s_1) \\
        D_j &: (m_j - s_{j-1} - s_j, m_j + s_{j-1} + s_j) \qquad (2\leq j\leq k-1) \\
        D_k &: (m_k - s_{k-1}, m_k + s_{k-1})
    \end{align*}
    We need to show that the lower limit on each of these intervals is at least $1$. We treat each case separately.
        \textbf{Case One $D_1$:} In this case we have the following lower bound on $m_1 - s_1$:
        \begin{align*}
            m_1 - s_1 &= (2k-2+1)(2) + |t|^2 (1)(2k-2+2) - |t|\sqrt{(3)(2)(2k-2+1)(2k-2)} \\
            &= (2k-1)(2) + |t|^2 (2k) - |t|\sqrt{6(2k-1)(2k-2)} \geq (2k-1)(2) + |t|^2 (2k) - |t| \sqrt{6} (2k-1)
        \end{align*}
        This is minimized over $|t|$ when $|t| = \frac{\sqrt{6}(2k-1)}{2(2k)} \geq 1$. As we only consider $|t|\in(0,1)$, it suffices to consider what happens when $|t|=1$. When $|t|=1$, we have
        \[m_1 - s_1 \geq (2k-1)(2) + (2k) - \sqrt{6} (2k-1) = (2-\sqrt{6})(2k-1) + (2k) > (2k) - (2k-1) = 1.\]
        \textbf{Case Two $D_k$:} In this case we have the following lower bound on $m_k - s_{k-1}$:
        \begin{align*}
            m_k - s_{k-1} &= (2k-2k+1)(2k) + |t|^2 (2k-1)(2k-2k+2) \\
            &\qquad - |t|\sqrt{(2(k-1)+1) (2(k-1)) (2k-2(k-1)+1) (2k-2(k-1))} \\
            &= (2k) + |t|^2 (2k-1)(2) - |t|\sqrt{(2k-1)(2k-2)(3)(2)} \\
            &\geq (2k) + |t|^2 (2)(2k-1) - |t| \sqrt{6}(2k-1)
        \end{align*}
        This is minimized over $|t|$ when $|t| = \frac{\sqrt{6} (2k-1)}{2(2)(2k-1)} = \frac{\sqrt{6}}{4}$. For this value of $|t|$, we have
        \begin{align*}
            m_k - s_{k-1} &\geq (2k) + \frac{6}{16} (2)(2k-1) - \frac{\sqrt{6}}{4} \sqrt{6}(2k-1) \\
            &= (2k) + \frac{3}{4}(2k-1) - \frac{3}{2}(2k-1) 
            = (2k) - \frac{3}{4}(2k-1)
            \geq (2k) - (2k-1) = 1
        \end{align*}
        \textbf{Case Three $D_j$, $2\leq j\leq k-1$:} This is the most technical of the three cases. We have the following lower bound on $m_j - s_{j-1} - s_j$:
        \begin{align*}
            m_j - s_{j-1} - s_j &= (2k-2j+1)(2j) + |t|^2 (2j-1)(2k-2j+2) \\
            &\qquad - |t| \sqrt{(2(j-1)+1) (2(j-1)) (2k-2(j-1)+1) (2k-2(j-1))} \\
            &\qquad - |t| \sqrt{(2j+1) (2j) (2k-2j+1) (2k-2j)} \\
            &= (2k-2j+1)(2j) + |t|^2 (2j-1)(2k-2j+2) \\
            &\qquad - |t| \sqrt{(2j-1) (2j-2) (2k-2j+3) (2k-2j+2)} \\
            &\qquad - |t| \sqrt{(2j+1) (2j) (2k-2j+1) (2k-2j)}.
        \end{align*}
        This is minimized when
        \[|t| = \frac{r_1 + r_2}{2(2j-1)(2k-2j+2)}\]
        where for convenience, we are temporarily using the symbols
        \[r_1 = \sqrt{(2j-1) (2j-2) (2k-2j+3) (2k-2j+2)} \text{ and } r_2 = \sqrt{(2j+1) (2j) (2k-2j+1) (2k-2j)}.\]
        For this value of $|t|$, we have
        \begin{align*}
            m_j - s_{j-1} - s_j &\geq (2k-2j+1)(2j) + \frac{(r_1 + r_2)^2}{4(2j-1)(2k-2j+2)} - \frac{(r_1 + r_2)^2}{2(2j-1)(2k-2j+2)} \\
            &= (2k-2j+1)(2j) - \frac{(r_1 + r_2)^2}{4(2j-1)(2k-2j+2)}
            = (2k-2j+1)(2j) - \frac{r_1^2 + r_2^2 + 2r_1r_2}{4(2j-1)(2k-2j+2)}.
        \end{align*}
        Set $a = 2j$, $b = 2k - 2j$. The previous constraints on $j$ and $k$ translate to the new constraints $a+b = 2k$, $4 \leq a \leq 2k-2$, $2 \leq b \leq 2k-4$. The above inequality becomes
        \begin{align*}
            m_j - s_{j-1} - s_j &\geq \underbrace{(b+1)(a) - \frac{(a-1)(a-2)(b+3)(b+2) + (a+1)(a)(b+1)(b)}{4(a-1)(b+2)}}_{p_1(a,b)} \\
            &\qquad - \underbrace{\frac{2\sqrt{(a-1)(a-2)(b+3)(b+2) (a+1)(a)(b+1)(b)}}{4(a-1)(b+2)}}_{p_2(a,b)}.
        \end{align*}
        A straightforward manipulation shows that for $k \geq 4$, $a = 4$, and $b = 2k-4$, $p_1(a,b) - p_2(a,b) \geq 1$. We aim for an overall lower bound of $1$. Since $p_1(a,b) - p_2(a,b)$ is clearly continuous, suppose by way of contradiction that for some $a$ and $b$, $p_1(a,b) - p_2(a,b) = 1$. By moving the terms around, this means
        \begin{align*}
            (p_1(a,b) - 1)^2 - (p_2(a,b))^2 &= 0.
        \end{align*}
        Using algebra to collect all terms on the left-hand side over a common denominator, the numerator yields the equation
        \begin{align*}
            a^4+2 a^3 (2 b^3+8 b^2+8 b+1)-a^2 (4 b^3+14 b^2+14 b+3) &\quad \\
            -2 a (2 b^3+7 b^2+7 b+2)+(b^2+3 b+2)^2 &= 0.
        \end{align*}
        Because $a = 2j \geq 4$,
        \[a^3 (2 b^3+8 b^2+8 b+1) \geq a^2 (4 b^3+14 b^2+14 b+3)\]
        and
        \[a^3 (2 b^3+8 b^2+8 b+1) \geq 2 a (2 b^3+7 b^2+7 b+2).\]
        The terms $a^4$ and $(b^2+3 b+2)^2$ are clearly positive. Overall this shows that $(p_1(a,b) - 1)^2 - (p_2(a,b))^2 > 0$, which is a contradiction. Hence, our lower bound $p_1(a,b) - p_2(a,b)$ is bounded below by $1$ and it follows that $m_j - s_{j-1} - s_j \geq 1$.
    Overall we have shown that the lower limit of each Gershgorin interval is bounded below by $1$, which is exactly what we wanted.
\end{proof}

This estimate allows us to understand the bottom of the spectrum of $\boxb^t$ on $S^3 / C_2$ and thus obtain the following conclusion.

\begin{corollary}\label{positiveSpectrum}
    With the exception of finitely many eigenvalues corresponding to the spaces $\harm_{2k}$ with $0 \leq k \leq 3$, the nonzero eigenvalues of $\boxb^t$ are bounded below by $2 h(t) = \frac{2 (1 + |t|^2)}{(1 - |t|^2)^2}$, independently of $k$. In other words, for all $|t| < 1$, 0 is not an accumulation point of the spectrum of $\boxb^t$ on $\S^3 / C_2$. It follows that for all $|t| < 1$, the quotient of the Rossi sphere by the action of $C_2$ is embeddable.
\end{corollary}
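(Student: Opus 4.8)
The plan is to combine the spectral estimate of Proposition~\ref{w-eigenvalues-bounded-below} with the matching-eigenvalue result of Theorem~\ref{matching eigenvalues} to produce a uniform positive lower bound on the nonzero spectrum of $\boxb^t$ away from finitely many low-degree harmonic spaces, and then feed this into the spectral characterization of embeddability for compact strongly pseudoconvex $3$-dimensional CR manifolds.

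First I would recall that, by the decompositions $L^2(\S^3/C_2)\cong\bigoplus_{k\ge 0}\harm_{2k}$ and $\harm_{2k}=\bigoplus_{i=0}^{2k}(V_{f_i}\oplus W_{f_i})$, together with the fact that $\boxb^t$ preserves each of these finite-dimensional blocks, every eigenvalue of $\boxb^t$ on $\S^3/C_2$ occurs on one of the blocks $V_{f_i}$ or $W_{f_i}$. By Theorem~\ref{matching eigenvalues}, the nonzero eigenvalues of $\boxb^t|_{V_{f_i}}$ coincide, with multiplicity, with those of $\boxb^t|_{W_{f_i}}$, so it suffices to bound below the nonzero eigenvalues of $\boxb^t$ on the spaces $W_{f_i}$.

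Next I would unwind the normalizations used in Proposition~\ref{w-eigenvalues-bounded-below}. The matrix $M$ there represents $\boxb^t|_{W_f}$ after (i) discarding the scalar $h(t)=\frac{1+|t|^2}{(1-|t|^2)^2}$, (ii) conjugating by a diagonal matrix, which leaves eigenvalues unchanged, and (iii) discarding an overall factor of $2$. Hence for $k\ge 4$ the eigenvalues of $\boxb^t|_{W_f}$ are exactly $2h(t)$ times those of $M$, and Proposition~\ref{w-eigenvalues-bounded-below} gives that they are all at least $2h(t)$, a bound independent of $k$. Combined with Theorem~\ref{matching eigenvalues}, every nonzero eigenvalue of $\boxb^t$ on $\harm_{2k}$ with $k\ge 4$ is at least $2h(t)$. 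The spaces $\harm_{2k}$ with $0\le k\le 3$ are finitely many and each finite-dimensional, so they contribute only finitely many eigenvalues; therefore, apart from this finite set, the nonzero spectrum of $\boxb^t$ on $\S^3/C_2$ lies in $[2h(t),\infty)$, and in particular $0$ is not an accumulation point of the spectrum.

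Finally, I would invoke the standard criterion: a compact, strongly pseudoconvex, abstract CR manifold of dimension $3$ is globally CR embeddable into some $\C^N$ precisely when $\bar{\dee}_b$ on functions has closed range, equivalently when $0$ is not an accumulation point of the spectrum of the Kohn Laplacian (see \cite{Burns79}, \cite[Chapter 12]{CS01}, and \cite{REU17, REU18}). Since $(\mathbb{RP}^3,\overline{\mathcal{L}_t})$ is compact, $3$-dimensional and strongly pseudoconvex by Corollary~\ref{rossi-quotients-are-cr-mflds}, and the spectral gap just established provides exactly the missing hypothesis, the quotient of the Rossi sphere by $C_2$ is CR embeddable for every $|t|<1$. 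The substantive work is Proposition~\ref{w-eigenvalues-bounded-below}; for the corollary itself the only points needing care are tracking the three rescalings above and citing the embeddability criterion in the correct setting (compact, $3$-dimensional, strongly pseudoconvex), where, unlike in higher dimensions, closed range is not automatic.
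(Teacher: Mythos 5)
Your proposal is correct and follows exactly the route the paper intends: the corollary is stated as an immediate consequence of Proposition~\ref{w-eigenvalues-bounded-below} and Theorem~\ref{matching eigenvalues}, with the factors of $2$ and $h(t)$ restored and the closed-range/spectral-gap criterion for embeddability of compact strongly pseudoconvex $3$-dimensional CR manifolds invoked at the end. Your careful tracking of the three normalizations and the reduction from $V_f$ to $W_f$ fills in precisely the details the paper leaves implicit.
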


\section{Embeddability classification of other quotients of the Rossi sphere} \label{sec:rossi-embeddability-classification}

In this section, we consider other quotients of the Rossi sphere. The main result of this section is Theorem \ref{Sec4Main}, which shows that the quotient of the Rossi sphere by the action of a finite group $G\leq \on{SU}(2)$ is embeddable if and only if the order of $G$ is even. See also \cite{Epstein} for similar results.
For any vector space $V$ and finite group $G$ acting on $V$, let $V^G$ be the set of vectors in $V$ which are fixed by the action of each element of $G$. That is,
$V^G:=\{v\in V: g\cdot v=v, \,\forall g\in G\}.$
Whenever $v\in V^G$, we say that $v$ is fixed by $G$ or that $v$ is $G$-invariant.

An important fact which we use later is that if $\rho:G\to GL(V)$ is a representation of a finite group $G$ on $V$ (i.e. $g\cdot v = \rho(g)v$ for all $g\in G,v\in V$) then the standard projection onto $V^G$ is given by
$\frac{1}{\abs{G}}\sum_{g\in G}\rho(g)$.

We imitate the notation in Corollary 1.4 in  \cite{Ikeda79} and define the following spaces to relate the spectra of the Kohn Laplacian on M and on its quotients. Indeed, let $M$ be an abstract CR manifold and $\Gamma$ be a discrete group which acts smoothly, freely and properly on $M$. For any real number $\lambda$, denote the eigenspaces by  $\widehat{E}_\lambda = \{f\in L^2(M):\boxb^{M} f = \lambda f\}$ and $E_\lambda = \{f\in L^2(M/\Gamma):\boxb^{M/\Gamma} f = \lambda f\}.$ Furthermore, we use $\mathcal{A}^\Gamma$ to denote the functions in a space $\mathcal{A}$ that are invariant under the action of the group $\Gamma$.

\begin{lemma} \label{Ikedas}
    The spaces $E_\lambda$ and $\widehat{E}_\lambda^\Gamma$ are canonically isomorphic and therefore $\dim E_\lambda = \dim \widehat{E}_\lambda^\Gamma$.
\end{lemma}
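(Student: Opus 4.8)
The plan is to construct the canonical isomorphism explicitly via pullback along the quotient map and then verify it is a bijection that respects the eigenspace structure. Write $\phi: M \to M/\Gamma$ for the quotient map. The key input is Proposition \ref{kohn-laplacian-commute}, which gives $\phi^* \circ \boxb^{M/\Gamma} = \boxb^M \circ \phi^*$. I would define the map $\Phi: E_\lambda \to \widehat{E}_\lambda^\Gamma$ by $\Phi(f) = \phi^* f$. First I must check this lands in the right space: if $\boxb^{M/\Gamma} f = \lambda f$, then $\boxb^M(\phi^* f) = \phi^*(\boxb^{M/\Gamma} f) = \lambda \phi^* f$, so $\phi^* f \in \widehat{E}_\lambda$; and $\phi^* f$ is $\Gamma$-invariant because $\ell_g \circ \phi = \phi$ for every $g \in \Gamma$, hence $\ell_g^* \phi^* f = (\phi \circ \ell_g)^* f = \phi^* f$. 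One should also note $\phi^* f \in L^2(M)$: since $\phi$ is a local diffeomorphism and $\Gamma$ acts by isometries, on a fundamental domain $\phi$ is measure-preserving up to the constant $|\Gamma|$, so $\|\phi^* f\|_{L^2(M)}^2 = |\Gamma| \, \|f\|_{L^2(M/\Gamma)}^2$.

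Next I would establish injectivity and surjectivity. Injectivity is immediate: $\phi$ is surjective, so $\phi^* f = 0$ forces $f = 0$. For surjectivity, take $h \in \widehat{E}_\lambda^\Gamma$. Since $h$ is $\Gamma$-invariant and $\phi$ is a surjective local diffeomorphism whose fibers are exactly the $\Gamma$-orbits, $h$ descends to a well-defined function $f$ on $M/\Gamma$ with $\phi^* f = h$; the local diffeomorphism property transfers smoothness (or the relevant Sobolev regularity) of $h$ down to $f$, and the $L^2$ computation above shows $f \in L^2(M/\Gamma)$. Finally, applying $\phi^*$ to $\boxb^{M/\Gamma} f - \lambda f$ gives $\boxb^M h - \lambda h = 0$, and since $\phi^*$ is injective we conclude $\boxb^{M/\Gamma} f = \lambda f$, i.e. $f \in E_\lambda$. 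Thus $\Phi$ is a linear bijection, and $\dim E_\lambda = \dim \widehat{E}_\lambda^\Gamma$ follows. (The word "canonical" is justified because $\Phi$ depends on nothing but $\phi$ itself; if one wants an isometry rather than merely an isomorphism, rescale by $|\Gamma|^{-1/2}$.)

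The main obstacle is not any single deep step but rather making the "descends to a well-defined function" argument fully rigorous at the level of regularity needed for the Kohn Laplacian — i.e. checking that $\Gamma$-invariant elements of the relevant domain (smooth forms, or the appropriate Sobolev completion on which $\boxb^M$ is self-adjoint) push down to elements of the corresponding domain on $M/\Gamma$, and that the self-adjoint realizations are compatible with $\phi^*$. This is exactly the kind of statement that the paper has been setting up through the sequence of propositions on $\bar\partial_M$ versus $\bar\partial_{M/\Gamma}$ and their adjoints, so I would invoke those: the commuting-square property for $\bar\partial$ and $\bar\partial^*$ already encodes that $\phi^*$ intertwines the two complexes and their Hilbert-space adjoints, which is what legitimizes passing between the $L^2$ eigenspace problems on $M$ and $M/\Gamma$. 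Everything else is bookkeeping with the fact that $\phi$ is a Riemannian covering.
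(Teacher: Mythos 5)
Your proposal is correct and follows essentially the same route as the paper's proof: both construct the canonical bijection via pullback/descent along the quotient map $\phi$, use the $\Gamma$-invariance of functions and the local-diffeomorphism property of $\phi$ to pass between $M$ and $M/\Gamma$, and invoke Proposition \ref{kohn-laplacian-commute} to see that the eigenvalue equation is preserved in both directions. Your additional remarks on the $L^2$ normalization and on regularity are finer-grained than what the paper records, but they do not change the argument.
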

\begin{proof}
    Denote by $\phi: M \to M / \Gamma$ the quotient map and let $\widehat{f} \in \widehat{E}_\lambda^\Gamma$. As $\widehat{f}$ is fixed by $\Gamma$, it canonically defines a function $f \in L^2(M / \Gamma)$ given by $f(p) = \widehat{f}(q)$ where $q \in \phi^{-1}(\{p\})$ arbitrarily. Furthermore, $f \in E_\lambda$ because for any $q \in M$ we can take an open neighborhood $U$ of $q$ small enough to contain at most one point in each orbit of $\Gamma$ and consider the action of $\boxb^{M / \Gamma}$ on $\phi(U)$; we apply Proposition \ref{kohn-laplacian-commute} on $U$ which shows that $f$ is an eigenfunction of $\boxb^{M / \Gamma}$ with eigenvalue $\lambda$. Conversely, given any $f \in E_\lambda$, there is canonically a corresponding $\widehat{f} \in L^2(M)$ given by $\widehat{f}(q) = f(\phi(q))$, which is fixed by $\Gamma$ by construction. By a similar argument as before, $\widehat{f}$ actually is in $\widehat{E}_\lambda^\Gamma$. Overall we have described a bijection $E_\lambda \to \widehat{E}_\lambda^\Gamma$; this bijection is clearly linear, so it is an isomorphism. The equality of dimensions immediately follows: $\dim E_\lambda = \dim \widehat{E}_\lambda^\Gamma$.
\end{proof}

At this point we are able to conclude that quotients of the Rossi sphere by even order finite subgroups of $\on{SU}(2)$ are embeddable. We require this statement later in the proof of the main result of the section (Theorem \ref{Sec4Main}).

\begin{proposition}\label{evens}
    If $\Gamma$ is a discrete group of even order then $\boxb^t$ has positive spectrum on $\S^3/\Gamma$. 
\end{proposition}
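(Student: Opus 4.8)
The plan is to reduce the statement to the embeddability of $\S^3/C_2$ established in Corollary \ref{positiveSpectrum}. The crucial (and essentially only) group-theoretic observation is that a finite subgroup $\Gamma \leq \on{SU}(2)$ of even order must contain $C_2 = \{\pm\mathrm{Id}\}$: by Cauchy's theorem $\Gamma$ has an element of order $2$, and in $\on{SU}(2)$, viewed as the group of unit quaternions, the only element of order $2$ is $-\mathrm{Id}$ (an order-$2$ element is diagonalizable with eigenvalues in $\{\pm 1\}$ and determinant $1$, which forces both eigenvalues to equal $-1$). Hence $C_2 \subseteq \Gamma$, and since $C_2$ is central it is normal in $\Gamma$; in particular the quotient $\S^3/\Gamma$ factors through $\S^3/C_2$.

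Next I would compare the two spectra. Identifying $L^2(\S^3/\Gamma)$ with $L^2(\S^3)^\Gamma$ and $L^2(\S^3/C_2)$ with $L^2(\S^3)^{C_2}$, the inclusion $C_2 \subseteq \Gamma$ gives $L^2(\S^3)^\Gamma \subseteq L^2(\S^3)^{C_2}$. Both $\boxb^t$ on $\S^3$ and on its quotients have discrete spectrum with finite multiplicities, since $\boxb^t$ preserves each finite-dimensional space $\harm_{2k}$ (respectively its $\Gamma$- or $C_2$-invariant part). Applying Lemma \ref{Ikedas} to both quotients gives, for every real $\lambda$, the chain $\dim E_\lambda(\S^3/\Gamma) = \dim \widehat{E}_\lambda^{\Gamma} \leq \dim \widehat{E}_\lambda^{C_2} = \dim E_\lambda(\S^3/C_2)$. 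Consequently any $\lambda$ that is not an eigenvalue of $\boxb^t$ on $\S^3/C_2$ is also not an eigenvalue on $\S^3/\Gamma$; equivalently, the nonzero spectrum of $\boxb^t$ on $\S^3/\Gamma$ is contained in the nonzero spectrum of $\boxb^t$ on $\S^3/C_2$. (Alternatively, one could note that each $\ell_g$ with $g \in \Gamma$ is a CR isometry of $(\S^3, \Ell_t)$, so $\boxb^t$ commutes with the $\Gamma$-action by Proposition \ref{kohn-laplacian-commute} and therefore restricts to the closed invariant subspace $L^2(\S^3)^\Gamma$ of $L^2(\S^3/C_2)$; restricting a self-adjoint operator with discrete spectrum to an invariant subspace cannot create new eigenvalues.)

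Finally I would invoke Corollary \ref{positiveSpectrum}: on $\S^3/C_2$ every nonzero eigenvalue of $\boxb^t$ is either at least $2h(t)$ or one of the finitely many arising from $\harm_{2k}$ with $0 \leq k \leq 3$; the latter form a finite set of positive numbers and hence have a positive minimum, so there is a constant $c = c(t) > 0$ such that no nonzero eigenvalue of $\boxb^t$ on $\S^3/C_2$ lies below $c$. By the previous step the same bound holds on $\S^3/\Gamma$, so $0$ is not an accumulation point of the spectrum of $\boxb^t$ there, i.e.\ $\boxb^t$ has positive spectrum on $\S^3/\Gamma$. I do not expect a genuine obstacle: the argument rests entirely on the elementary fact $C_2 \leq \Gamma$ together with the monotonicity of eigenspace dimensions under a further quotient coming from Lemma \ref{Ikedas}; the only point needing minor care is absorbing the finitely many exceptional low-degree eigenvalues of Corollary \ref{positiveSpectrum}, which is immediate since a finite set of positive reals has a positive lower bound.
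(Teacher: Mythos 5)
Your proof is correct and takes essentially the same route as the paper's: establish $C_2 \leq \Gamma$ via the unique involution of $\on{SU}(2)$, note that every $\Gamma$-invariant function is $C_2$-invariant so the spectrum of $\boxb^t$ on $\S^3/\Gamma$ is contained in that on $\S^3/C_2$, and conclude by Corollary \ref{positiveSpectrum}. You simply supply more detail (Cauchy's theorem, the eigenspace comparison via Lemma \ref{Ikedas}, and the explicit handling of the finitely many exceptional low-degree eigenvalues) than the paper's terser argument.
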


\begin{proof}
    If $\Gamma$ has even order, it must have a subgroup isomorphic to $C_2$. Because $\on{SU}(2)$ contains a unique involution, $\Gamma $ must contain the particular subgroup $C_2=\{(1,0),(-1,0)\}$.
     Since $C_2 \leq \Gamma$, any $\Gamma$-invariant function on $\S^3$ is also $C_2$-invariant; this shows that the spectrum of $\boxb^t$ on $\S^3 / \Gamma$ is a subset of the spectrum of $\boxb^t$ on $\S^3 / C_2$. By Corollary \ref{positiveSpectrum}, the latter set does not have an accumulation point at $0$, so neither does the former.
\end{proof}

In order to study $\boxb^t$ on quotients by subgroups of $\on{SU}(2)$, we first need to establish basic facts about how $\mathcal{L}$ and $\overline{\mathcal{L}}$ interact with the action of $\on{SU}(2)$.

\begin{lemma}\label{redbox}
    For any $g\in (2)$ and $f\in L^2(\S^3)$,
    \[\mathcal{L}(\ell_g^* f)=\ell_g^* (\mathcal{L} f) \text{, }\overline{\mathcal{L}}(\ell_g^* f)=\ell_g^* (\overline{\mathcal{L}} f)
    \text{, and } \boxb^t(\ell_g^* f) = \ell_g^*(\boxb^t).  \]
\end{lemma}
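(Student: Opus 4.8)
The plan is to exploit that, under the identification $\S^3\cong\on{SU}(2)$, the vector fields $\Ell$ and $\bar\Ell$ are \emph{left-invariant}: this is exactly the fact already invoked in the proof of Corollary \ref{rossi-quotients-are-cr-mflds}, where $\Ell,\bar\Ell$ are noted to be invariant under complexified differentials of left-multiplications. Concretely this says $(d_p\ell_g)^{\C}\Ell_p=\Ell_{\ell_g(p)}$ and $(d_p\ell_g)^{\C}\bar\Ell_p=\bar\Ell_{\ell_g(p)}$ for every $g\in\on{SU}(2)$ and every $p\in\S^3$. If one prefers a self-contained derivation of this, it follows from the group law $(\xi_1,\xi_2)\cdot(z_1,z_2)=(\xi_1 z_1-\overline{\xi_2}z_2,\,\xi_2 z_1+\overline{\xi_1}z_2)$: since $\ell_g$ is $\C$-linear, $d_p\ell_g$ is represented in the coordinates $(z_1,z_2)$ by the matrix of $g$ (and by its conjugate on the $\bar z$-coordinates), and a short computation --- using $g\in\on{SU}(2)$ so that $g_{22}=\overline{g_{11}}$ and $g_{21}=-\overline{g_{12}}$ --- shows $d_p\ell_g$ carries $\Ell_p$ to $\Ell_{\ell_g(p)}$ and $\bar\Ell_p$ to $\bar\Ell_{\ell_g(p)}$.

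Granting left-invariance, the elementary principle is that a left-invariant vector field commutes with pullback along left translations. For smooth $f$ and $p\in\S^3$, the chain rule gives
\[
\bigl(\Ell(\ell_g^* f)\bigr)(p)=\Ell_p(f\circ\ell_g)=\bigl((d_p\ell_g)^{\C}\Ell_p\bigr)(f)=\Ell_{\ell_g(p)}(f)=(\Ell f)(\ell_g(p))=\bigl(\ell_g^*(\Ell f)\bigr)(p),
\]
and identically for $\bar\Ell$. To reach all of $L^2(\S^3)$ one reads $\Ell,\bar\Ell,\boxb^t$ as closed operators (weak derivatives); since $\ell_g^*$ is a unitary of $L^2(\S^3)$ that commutes with these operators on the dense core of smooth functions, it commutes with them on their full domains. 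Finally, because $\Ell_t=\Ell+\bar t\,\bar\Ell$ and $\bar\Ell_t$ are fixed $\C$-linear combinations of $\Ell$ and $\bar\Ell$, linearity of the identity just proved gives $\Ell_t(\ell_g^* f)=\ell_g^*(\Ell_t f)$ and $\bar\Ell_t(\ell_g^* f)=\ell_g^*(\bar\Ell_t f)$; composing these and pulling the constant $-h(t)=-\frac{1+|t|^2}{(1-|t|^2)^2}$ through yields $\boxb^t(\ell_g^* f)=\ell_g^*(\boxb^t f)$, using the representation $\boxb^t=-h(t)\,\Ell_t\bar\Ell_t$.

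There is no genuinely hard step here: the two points needing care are confirming the left-invariance of $\Ell,\bar\Ell$ --- either cited from the Lie-group structure already used in Corollary \ref{rossi-quotients-are-cr-mflds} or checked by the short coordinate computation sketched above --- and the routine density argument upgrading the pointwise identity from smooth functions to $L^2(\S^3)$. Neither is a real obstacle; the content of the lemma is essentially the definition of left-invariance rephrased in terms of the pullback, and its role is to let us average over $\Gamma$ and work with $\Gamma$-invariant functions in the sections that follow.
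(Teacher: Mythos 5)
Your proposal is correct and follows essentially the same route as the paper: both arguments rest on the left-invariance of $\Ell$ and $\bar\Ell$ under $\on{SU}(2)$ (the paper cites $\ell_g$-relatedness via Corollary 8.21 of \cite{lee-smooth-manifolds}, you carry out the equivalent chain-rule computation directly) and then observe that $\boxb^t$ is built from $\Ell$ and $\bar\Ell$ by linear combinations and composition. Your extra care about extending from smooth functions to $L^2$ by density is a welcome refinement the paper leaves implicit, but it does not change the substance of the argument.
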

\begin{proof}
     Recall that $\Ell, \bar{\Ell}$ are left-invariant complexified vector fields. We have that
     \[\ell_g^* (\Ell f) = (\Ell f) \circ \ell_g,\]
     but also 
     \[\Ell (\ell_g^* f) = \Ell (f \circ \ell_g) = ((\ell_{g^*} \Ell) f) \circ \ell_g = (\Ell f) \circ \ell_g,\]
     where we have used Corollary 8.21 from \cite{lee-smooth-manifolds} and the left-invariance of $\Ell$. This completes the proof for $\Ell$, and the proof is entirely the same for $\bar{\Ell}$.  Since $\boxb^t$ is a linear combination of compositions of $\Ell$ and $\bar{\Ell}$, it follows that $\boxb^t(\ell_g^* f) = \ell_g^*(\boxb^t) $ as well.
\end{proof}

Now, we consider the action of subgroups of $\on{SU}(2)$ on the sphere and note that the isomorphic subgroups lead into the same spectrum.

\begin{theorem}\label{isospect}
For $|t|<1$, the spectrum of the Kohn Laplacian $\boxb^t$ is the same on quotients of the sphere obtained by two isomorphic subgroups of $\on{SU}(2)$. 
\end{theorem}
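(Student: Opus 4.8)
The plan is to reduce Theorem \ref{isospect} to the case in which the two finite subgroups are \emph{conjugate} inside $\on{SU}(2)$, and then to settle that case using the left-invariance recorded in Lemma \ref{redbox}. Thus the argument splits into a group-theoretic part and an operator-theoretic part, the former being the only substantive one.

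\textbf{Step 1: isomorphic finite subgroups of $\on{SU}(2)$ are conjugate.} When the common isomorphism type is cyclic of order $n$, this is elementary: every element of order $n$ in $\on{SU}(2)$ is conjugate to $\on{diag}(\zeta,\zeta^{-1})$ for a primitive $n$-th root of unity $\zeta$, and such an element generates the same cyclic group as each of its generators, so there is a single conjugacy class of cyclic subgroups of each order. The remaining finite subgroups of $\on{SU}(2)$ — binary dihedral, binary tetrahedral, binary octahedral, binary icosahedral — all have even order, hence contain an element of order two; since $-\mathrm{Id}$ is the unique involution in $\on{SU}(2)$, each such $G_i$ contains $-\mathrm{Id}$, and any abstract isomorphism $G_1\to G_2$ sends the unique involution to the unique involution and therefore descends to an isomorphism of the images $\pi(G_1),\pi(G_2)$ under the double cover $\pi\colon\on{SU}(2)\to\on{SO}(3)=\on{SU}(2)/\{\pm\mathrm{Id}\}$. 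Isomorphic finite subgroups of $\on{SO}(3)$ are conjugate by the classification of finite rotation groups, and the preimage under $\pi$ of a subgroup is conjugate to the preimage of a conjugate subgroup; since each $G_i$ equals $\pi^{-1}(\pi(G_i))$ (as $-\mathrm{Id}\in G_i$), we conclude that $G_1$ and $G_2$ are conjugate in $\on{SU}(2)$. (Alternatively one can simply quote this from the classification of finite subgroups of $\on{SU}(2)$.)

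\textbf{Step 2: conjugate subgroups yield unitarily equivalent operators.} Pick $h\in\on{SU}(2)$ with $hG_2h^{-1}=G_1$. Since left translation $\ell_h$ is an isometry of the round sphere $\S^3\cong\on{SU}(2)$, the pullback $\ell_h^*$ is unitary on $L^2(\S^3)$; a short computation using $\ell_a^*\circ\ell_b^*=\ell_{ba}^*$ shows that $\ell_h^* f$ is $G_2$-invariant precisely when $f$ is $hG_2h^{-1}$-invariant, so $\ell_h^*$ restricts to a unitary isomorphism $L^2(\S^3)^{G_1}\to L^2(\S^3)^{G_2}$. By Lemma \ref{redbox}, $\ell_h^*$ commutes with $\boxb^t$, so the restriction of $\boxb^t$ to $L^2(\S^3)^{G_1}$ is unitarily equivalent to its restriction to $L^2(\S^3)^{G_2}$; in particular the two operators have identical spectra (point, essential, and continuous), with multiplicities. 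Finally, by Lemma \ref{Ikedas}, or directly from Proposition \ref{kohn-laplacian-commute} (the quotient pullback $\phi^*$ is a scalar multiple of a unitary from $L^2(\S^3/G_i)$ onto $L^2(\S^3)^{G_i}$ intertwining the two Kohn Laplacians), the spectrum of $\boxb^t$ on $\S^3/G_i$ coincides with the spectrum of $\boxb^t$ restricted to $L^2(\S^3)^{G_i}$. Chaining the two identifications proves the theorem.

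\textbf{Main obstacle.} The only genuinely non-formal ingredient is Step 1, the classical fact that isomorphic finite subgroups of $\on{SU}(2)$ are conjugate; granting it, Step 2 is routine. One could try a character-theoretic route instead, observing that each nonzero eigenspace of $\boxb^t$ on $L^2(\S^3)$ is a finite-dimensional $\on{SU}(2)$-representation (Lemma \ref{redbox}), so that $\dim E_\lambda=\tfrac{1}{|G_i|}\sum_{g\in G_i}\chi_\lambda(g)$ for its character $\chi_\lambda$; but since $\chi_\lambda$ is a class function on $\on{SU}(2)$, verifying that these sums agree for $G_1$ and $G_2$ amounts to matching the multiset of $\on{SU}(2)$-conjugacy classes occurring in the two groups, which again reduces to the group theory of Step 1.
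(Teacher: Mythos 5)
Your proof follows essentially the same route as the paper's: reduce to the fact that isomorphic finite subgroups of $\on{SU}(2)$ are conjugate, and then use $\ell_\tau^*$ (unitary, commuting with $\boxb^t$ by left-invariance) to identify the invariant subspaces and hence the spectra. The only difference is that you supply a proof of the conjugacy statement and spell out the ``not hard to verify'' intertwining step, whereas the paper simply cites Zassenhaus for the former; both are correct.
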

\begin{proof}
Let $G_1,G_2\leq \on{SU}(2)$ be two isomorphic groups. Then by \cite[Chapter 1, Section 6]{Zass}, $G_2=\tau G_1 \tau^{-1}$ for some $\tau\in\on{SU}(2)$. It is not hard to verify that $\ell_\tau^*$ gives an isomorphism between $\widehat{E}_\lambda^{G_1}$ and $\widehat{E}_\lambda^{G_2}$ so $\dim(\widehat{E}_\lambda^{G_1})=\dim(\widehat{E}_\lambda^{G_2})$ and the theorem follows.
\end{proof}

Returning to the context of the proof of Theorem \ref{isospect} we can prove a less flashy but arguably more powerful lemma.

\begin{lemma}\label{harmony}
    Isomorphic subgroups $G_1\cong G_2$ of $\on{SU}(2)$ fix isomorphic subspaces of $\harm_{0,k}$ for each $k$. That is, $\dim(\harm_{0,k}^{G_1})=\dim(\harm_{0,k}^{G_2}).$
\end{lemma}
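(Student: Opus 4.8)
The plan is to leverage the conjugation relation between isomorphic subgroups of $\on{SU}(2)$ exactly as in the proof of Theorem \ref{isospect}, but applied at the level of the spherical harmonic subspaces $\harm_{0,k}$ rather than the eigenspaces $\widehat{E}_\lambda$. First I would recall that, by \cite[Chapter 1, Section 6]{Zass}, $G_2 = \tau G_1 \tau^{-1}$ for some $\tau \in \on{SU}(2)$, so the pullback map $\ell_\tau^*$ is a candidate linear isomorphism between the two invariant subspaces.

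The key steps, in order: (1) Observe that $\ell_\tau^*$ preserves each homogeneity class $\harm_{0,k}$. This is because the $\on{SU}(2)$-action on $\S^3 \subset \C^2$ is by (unitary) linear maps on $\C^2$, hence sends homogeneous polynomials of bidegree $(0,k)$ to homogeneous polynomials of bidegree $(0,k)$ and commutes with the Laplacian, so it preserves harmonicity; thus $\ell_\tau^*$ maps $\harm_{0,k}$ to $\harm_{0,k}$ bijectively. (2) Show that $\ell_\tau^*$ carries $G_1$-invariant functions to $G_2$-invariant functions: if $f \in \harm_{0,k}^{G_1}$ and $h \in G_2$, write $h = \tau g \tau^{-1}$ with $g \in G_1$; then $\ell_h^*(\ell_\tau^* f) = \ell_\tau^* \ell_g^* \ell_{\tau^{-1}}^* (\ell_\tau^* f) = \ell_\tau^* \ell_g^* f = \ell_\tau^* f$, using the contravariant functoriality of pullback, $\ell_{gh}^* = \ell_h^* \ell_g^*$ (being careful with the order), and the $G_1$-invariance of $f$. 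Hence $\ell_\tau^*$ restricts to a map $\harm_{0,k}^{G_1} \to \harm_{0,k}^{G_2}$. (3) The same argument with $\tau^{-1}$ in place of $\tau$ gives an inverse, so $\ell_\tau^*$ is an isomorphism $\harm_{0,k}^{G_1} \xrightarrow{\sim} \harm_{0,k}^{G_2}$, and in particular $\dim \harm_{0,k}^{G_1} = \dim \harm_{0,k}^{G_2}$.

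The only real subtlety — the main obstacle, though a minor one — is bookkeeping the variance conventions: one must check that $\ell_\tau^*$ genuinely sends $G_1$-invariants to $G_2$-invariants and not the other way around, which comes down to whether $g \mapsto \ell_g^*$ is a left or right action and how that interacts with $G_2 = \tau G_1 \tau^{-1}$ versus $G_2 = \tau^{-1} G_1 \tau$. Since $\ell_g \circ \ell_{g'} = \ell_{gg'}$ gives $(\ell_g \ell_{g'})^* = \ell_{g'}^* \ell_g^*$, the pullback is a right action, and a short computation confirms the claim with the stated conjugation; if the conventions came out reversed one would simply replace $\tau$ by $\tau^{-1}$ throughout. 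Everything else is a direct transcription of the Theorem \ref{isospect} argument, now observing additionally that $\ell_\tau^*$ respects the bigrading, which holds because $\tau$ acts $\C$-linearly on $\C^2$.
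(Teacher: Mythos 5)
Your proposal is correct and follows essentially the same route as the paper: conjugacy of isomorphic finite subgroups of $\on{SU}(2)$, plus the observation that the pullback by the conjugating element preserves bidegree and harmonicity and carries one invariant subspace onto the other. Your displayed computation does reverse the composition order of the pullbacks (contravariance gives $\ell_{\tau g \tau^{-1}}^* = \ell_{\tau^{-1}}^* \circ \ell_g^* \circ \ell_\tau^*$, so the intertwiner sending $G_1$-invariants to $G_2$-invariants is $\ell_{\tau^{-1}}^*$ rather than $\ell_\tau^*$), but you explicitly flag this bookkeeping issue and your stated fix of swapping $\tau$ for $\tau^{-1}$ resolves it.
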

\begin{proof}

    In the proof of Theorem \ref{isospect} we noted that there exists some $\tau \in \on{SU}(2)$ such that \[G_2 = \{\tau \cdot g \cdot \tau^{-1}: g\in G_1\}\] and that $\ell_\tau^*$ is an invertible linear transformation that maps eigenfunctions of $G_1$ to eigenfunctions of $G_2$. In particular, $\ell_\tau^*$ maps functions fixed by $G_1$ to functions fixed by $G_2$. This theorem follows by further verifying that $\ell_\tau^*$ preserves both bi-degree and the property of being harmonic, so it provides an isomorphism between $\harm_{0,k}^{G_1}$ and $\harm_{0,k}^{G_2}.$
\end{proof}

Lemma \ref{harmony} can be used to prove Theorem \ref{isospect}, but the argument is more indirect and requires the machinery of Proposition \ref{Gamma decomp}. We need that machinery to conclude that quotients by odd order subgroups are not embeddable so we move towards proving it with the next two lemmas.

\begin{lemma}\label{LGamma}
    If a function $f$ is fixed by $\Gamma$, then $\overline{\mathcal{L}}^m f$ and $\mathcal{L}^m f$ are also fixed by $\Gamma$ for any $m\geq 1$.
\end{lemma}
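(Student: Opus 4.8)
The plan is to reduce the statement to Lemma~\ref{redbox}, which already tells us that $\mathcal{L}$ and $\overline{\mathcal{L}}$ commute with the pullback $\ell_g^*$ for every $g \in \on{SU}(2)$, and in particular for every $g \in \Gamma$. First I would recall what it means for $f$ to be fixed by $\Gamma$: using the notation $\ell_g$ for the action of $g$, the function $f$ is $\Gamma$-invariant precisely when $\ell_g^* f = f$ for all $g \in \Gamma$ (this is the same convention used implicitly in the definition of $\harm_{0,k}^{\Gamma}$ and in Lemma~\ref{Ikedas}).

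Given this, the argument is short. Fix $g \in \Gamma$. By Lemma~\ref{redbox}, $\overline{\mathcal{L}}(\ell_g^* f) = \ell_g^*(\overline{\mathcal{L}} f)$. Since $f$ is fixed by $\Gamma$, the left-hand side is $\overline{\mathcal{L}} f$, so $\ell_g^*(\overline{\mathcal{L}} f) = \overline{\mathcal{L}} f$; that is, $\overline{\mathcal{L}} f$ is also fixed by $g$. As $g \in \Gamma$ was arbitrary, $\overline{\mathcal{L}} f$ is fixed by $\Gamma$. The claim for general $m$ then follows by induction on $m$: if $\overline{\mathcal{L}}^{m-1} f$ is fixed by $\Gamma$, apply the $m = 1$ case with $\overline{\mathcal{L}}^{m-1} f$ in place of $f$ to conclude that $\overline{\mathcal{L}}^m f = \overline{\mathcal{L}}(\overline{\mathcal{L}}^{m-1} f)$ is fixed by $\Gamma$. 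The identical argument with $\mathcal{L}$ in place of $\overline{\mathcal{L}}$ (again using Lemma~\ref{redbox}) handles $\mathcal{L}^m f$.

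There is essentially no obstacle here: the substance is entirely contained in Lemma~\ref{redbox}, and the only thing to be careful about is bookkeeping with the direction of the pullback convention, so that ``$\ell_g^* f = f$'' really is the correct formalization of ``$f$ is fixed by $\Gamma$'' and the induction is set up on the right object. If one preferred to avoid induction, one could instead observe directly that $\ell_g^*$ is multiplicative-compatible with iterated application of $\overline{\mathcal{L}}$, i.e. $\overline{\mathcal{L}}^m(\ell_g^* f) = \ell_g^*(\overline{\mathcal{L}}^m f)$ for all $m$ by iterating the intertwining relation, and then specialize to $\Gamma$-fixed $f$; but the inductive phrasing is cleaner and is what I would write.
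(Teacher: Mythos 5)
Your proof is correct and follows essentially the same route as the paper: both apply the intertwining relation of Lemma~\ref{redbox} to the base case $m=1$ and then induct, treating $\mathcal{L}$ by the identical argument. No further comment is needed.
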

\begin{proof}
     A function $f$ is fixed by $\Gamma$ iff $\ell_g^* f = f$ for all $g\in \Gamma$. 
     By Lemma \ref{redbox}, $\ell_g^* (\overline{\mathcal{L}} f) = \overline{\mathcal{L}}(\ell_g^* f) = \overline{\mathcal{L}}f $ for all $g\in \Gamma$. Hence, if $f$ is fixed by $\Gamma$ then $\overline{\mathcal{L}}f$ is also fixed by $\Gamma$. It follows by induction that if $f$ is fixed by $\Gamma$ then $\overline{\mathcal{L}}^m f$ is fixed by $\Gamma$ for all $m$. The proof that $\mathcal{L}^m f$ is fixed by $\Gamma$ as well can be obtained by replacing each instance of $\overline{\mathcal{L}}$ with $\mathcal{L}$.
\end{proof}

\begin{lemma}\label{Vinvar}
    Let $f\in \mathcal{H}_{0,k}$ be fixed by $\Gamma$. Every vector in $V_f\oplus W_f = \{f,\overline{\mathcal{L}}f,\overline{\mathcal{L}}^2 f, \overline{\mathcal{L}}^3 f,\dots \}$, is also fixed by $\Gamma$.
\end{lemma}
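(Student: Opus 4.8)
The plan is to observe that this lemma is an immediate consequence of Lemma \ref{LGamma} together with the linearity of the $\Gamma$-action, so the argument will be short. First I would recall the defining description of the relevant subspace: by construction $V_f \oplus W_f = \vspan\{f, \overline{\mathcal{L}} f, \overline{\mathcal{L}}^2 f, \overline{\mathcal{L}}^3 f, \dots\}$, i.e. it is the linear span of the family $\{\overline{\mathcal{L}}^m f : m \geq 0\}$ (only finitely many of which are nonzero, since $\overline{\mathcal{L}}^m f \in \harm_{m,k-m}$ vanishes once $m > k$). So it suffices to show that an arbitrary linear combination $\sum_m c_m \overline{\mathcal{L}}^m f$ is fixed by $\Gamma$.

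Next I would invoke Lemma \ref{LGamma}: since $f$ is assumed to be fixed by $\Gamma$, each generator $\overline{\mathcal{L}}^m f$ is fixed by $\Gamma$ as well. Then I would note that the set of $\Gamma$-invariant functions, $L^2(\S^3)^\Gamma = \{h : \ell_g^* h = h \ \forall g \in \Gamma\}$, is a linear subspace of $L^2(\S^3)$, because each $\ell_g^*$ is a linear operator; hence it is closed under taking linear combinations. Combining these two facts, $\sum_m c_m \overline{\mathcal{L}}^m f$ lies in $L^2(\S^3)^\Gamma$, which is exactly the claim.

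I do not anticipate a genuine obstacle here; the only points requiring any care are purely formal, namely that $V_f \oplus W_f$ really is the span of the iterates $\overline{\mathcal{L}}^m f$ (immediate from the definitions of $V_f$ and $W_f$ given earlier) and that $\Gamma$-invariance is preserved under linear combinations (immediate from linearity of pullback). Everything substantive has already been done in Lemma \ref{LGamma}, which in turn rests on Lemma \ref{redbox}. If one wanted to be slightly more self-contained, one could alternatively argue via the projection operator $\frac{1}{|\Gamma|}\sum_{g \in \Gamma} \ell_g^*$ onto $L^2(\S^3)^\Gamma$: a vector is fixed iff it is unchanged by this projector, and the projector commutes with $\overline{\mathcal{L}}$ by Lemma \ref{redbox}, giving the same conclusion; but the direct argument above is cleaner.
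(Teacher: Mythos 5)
Your proposal is correct and follows the same route as the paper, which simply observes that the claim follows directly from Lemma \ref{LGamma}; your additional remark that $\Gamma$-invariance is preserved under linear combinations (so that the whole span, not just the generators $\overline{\mathcal{L}}^m f$, is fixed) is a harmless and slightly more careful spelling-out of the same argument.
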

\begin{proof}
     This follows directly from Lemma \ref{LGamma}.
\end{proof}

Next, we relate the invariant functions and the subspaces $V_f$ and $W_f$ more precisely. Recall that these subspaces played an important role in representing the Kohn Laplacian as matrices.

\begin{proposition}\label{Gamma decomp}
Let \(f_1,f_2, \dots, f_m\) be an orthonormal basis for $(\mathcal{H}_{0,k})^\Gamma$. Then $(\mathcal{H}_k)^\Gamma = \bigoplus_{i=1}^m V_{f_i}\oplus W_{f_i}$.
\end{proposition}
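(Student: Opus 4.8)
The plan is to establish the two inclusions separately. The inclusion $\bigoplus_{i=1}^m V_{f_i} \oplus W_{f_i} \subseteq (\harm_k)^\Gamma$ is immediate: each $f_i$ is fixed by $\Gamma$, so by Lemma~\ref{Vinvar} every vector of $V_{f_i}\oplus W_{f_i} = \vspan\{\bar{\Ell}^j f_i : 0 \le j \le k\}$ is fixed by $\Gamma$; and the sum is direct because $\bar{\Ell}^j$ is injective on $\harm_{0,k}$ for $0 \le j \le k$ (an immediate consequence of Proposition~\ref{action-l-lbar}, since $-\Ell\bar{\Ell}$ acts as a nonzero scalar on $\harm_{p,q}$ whenever $q \ge 1$) and the bidegree pieces $\harm_{j,k-j}$ are linearly independent, so the vectors $\bar{\Ell}^j f_i$ (for $1 \le i \le m$, $0 \le j \le k$) are linearly independent. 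Thus the real content is the reverse inclusion.

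For that, I would take an arbitrary $h \in (\harm_k)^\Gamma$ and decompose it into bidegree components, $h = \sum_{j=0}^k h_j$ with $h_j \in \harm_{j,k-j}$. Since each $\ell_g$ restricts from a complex-linear (unitary) map of $\C^2$, the pullback $\ell_g^*$ preserves bidegree — this is exactly the observation used in the proof of Lemma~\ref{harmony} — so from $\ell_g^* h = h$ for all $g \in \Gamma$ and uniqueness of the bidegree decomposition one gets $\ell_g^* h_j = h_j$, i.e.\ each $h_j$ is itself $\Gamma$-invariant.

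The key step is then to pull each $h_j$ back to bidegree $(0,k)$. By Proposition~\ref{action-l-lbar} the operator $\bar{\Ell}^j \colon \harm_{0,k} \to \harm_{j,k-j}$ is an isomorphism (injective as noted above, hence bijective by equality of dimensions, or directly from the decomposition $\harm_k = \bigoplus_i V_{f_i}\oplus W_{f_i}$), and by Lemma~\ref{redbox} it commutes with every $\ell_g^*$; therefore it restricts to an isomorphism $(\harm_{0,k})^\Gamma \to (\harm_{j,k-j})^\Gamma$. Hence $h_j = \bar{\Ell}^j g_j$ for a unique $g_j \in (\harm_{0,k})^\Gamma = \vspan\{f_1, \dots, f_m\}$. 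Writing $g_j = \sum_{i=1}^m c_{ij} f_i$ gives $h_j = \sum_{i=1}^m c_{ij}\, \bar{\Ell}^j f_i$, and since $\bar{\Ell}^j f_i$ lies in $V_{f_i}$ when $j$ is even and in $W_{f_i}$ when $j$ is odd, we get $h_j \in \bigoplus_{i=1}^m V_{f_i}\oplus W_{f_i}$. Summing over $j$ yields $h \in \bigoplus_{i=1}^m V_{f_i}\oplus W_{f_i}$, completing the reverse inclusion.

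I do not expect a genuine obstacle here: the argument is an assembly of facts already in hand. The one input that is not purely formal is the isomorphism property of $\bar{\Ell}^j$ on the bidegree pieces together with its commutation with the $\on{SU}(2)$-action, and both are packaged in Proposition~\ref{action-l-lbar} and Lemma~\ref{redbox}. If one prefers, the whole proof can be recast through the averaging projection $P = \frac{1}{\abs{\Gamma}}\sum_{g\in\Gamma} \ell_g^*$ onto $\Gamma$-invariants: $P$ preserves each $\harm_{j,k-j}$, commutes with $\bar{\Ell}$ (Lemma~\ref{redbox}), and restricts on $\harm_{0,k}$ to the projection onto $\vspan\{f_1,\dots,f_m\}$, so applying $P$ to the decomposition $\harm_k = \bigoplus_i V_{f_i}\oplus W_{f_i}$ over a full orthonormal basis of $\harm_{0,k}$ extending $f_1,\dots,f_m$ gives the result at once.
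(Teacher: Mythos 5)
Your proof is correct, but it takes a genuinely different route from the paper's. The paper argues the reverse inclusion by contradiction: it completes $f_1,\dots,f_m$ to an orthonormal basis $f_1,\dots,f_{k+1}$ of $\harm_{0,k}$, writes a putative invariant $f$ outside $\bigoplus_{i\le m}V_{f_i}\oplus W_{f_i}$ as $\sum_{i=1}^{k+1}g_i$, observes that the nonzero tail $\sum_{i>m}g_i$ is still $\Gamma$-invariant, and applies the highest nonvanishing power of $\Ell$ to push it down into $\harm_{0,k}$, where it must lie simultaneously in $(\harm_{0,k})^\Gamma$ and in its orthogonal complement --- a contradiction. You instead argue directly: decompose an invariant $h\in(\harm_k)^\Gamma$ into bidegree pieces, use the (correct, and already invoked in the proof of Lemma~\ref{harmony}) fact that $\ell_g^*$ preserves bidegree to see each piece is invariant, and then use that $\bar{\Ell}^j\colon\harm_{0,k}\to\harm_{j,k-j}$ is an isomorphism commuting with $\ell_g^*$ (Proposition~\ref{action-l-lbar} for injectivity, Lemma~\ref{redbox} for equivariance) to pull each piece back to $(\harm_{0,k})^\Gamma=\vspan\{f_1,\dots,f_m\}$. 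Your argument is constructive and makes the underlying equivariance explicit, at the cost of needing the bijectivity of $\bar{\Ell}^j$ on bidegree pieces and the bidegree-preservation of $\ell_g^*$ as separate inputs; the paper's version trades these for a completion of the basis and an orthogonality contradiction, staying closer to the operator $\Ell$ and the kernel description $\ker(\Ell|_{\harm_k})=\harm_{0,k}$. Your closing remark via the averaging projection $P=\frac{1}{\abs{\Gamma}}\sum_{g\in\Gamma}\ell_g^*$ is also sound and is arguably the cleanest packaging, since $P$ is the orthogonal projection onto invariants and commutes with $\bar{\Ell}$ by Lemma~\ref{redbox}.
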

\begin{proof}
    Let \(f_1,f_2,\dots, f_m, f_{m+1}, \dots, f_{k+1}\) be the completion of the given basis to an orthonormal basis for all of $\mathcal{H}_{0,k}$. The fact that $\mathcal{H}_k = \bigoplus_{i=1}^{k+1} V_{f_i}\oplus W_{f_i}$ together with Lemma \ref{Vinvar} demonstrates that $\bigoplus_{i=1}^m V_{f_i}\oplus W_{f_i} \subseteq (\mathcal{H}_k)^\Gamma$; thus we just need to show that $(\mathcal{H}_k)^\Gamma \subseteq \bigoplus_{i=1}^m V_{f_i}\oplus W_{f_i}.$
    
    Suppose towards a contradiction that there exists some $f\in \mathcal{H}_k^\Gamma \setminus \bigoplus_{i=1}^m V_{f_i}\oplus W_{f_i}.$ 
    We can write $f=\sum_{i=1}^{k+1} g_i$ where $g_i\in V_{f_i}\oplus W_{f_i}$. Thus, $f-\sum_{i=1}^m g_i = \sum_{i=m+1}^{k+1} g_i$. Both $f$ and $\sum_{i=1}^m g_i$ are fixed by $\Gamma$ so $f-\sum_{i=1}^m g_i$ and hence $\sum_{i=m+1}^{k+1} g_i$ is also fixed by $\Gamma$.
    
    There is some $\eta$ such that $\mathcal{L}^\eta \sum_{i=m+1}^{k+1} g_i = 0$ but $\mathcal{L}^{\eta-1}\sum_{i=m+1}^{k+1} g_i\neq 0$. The kernel of $\mathcal{L}$ on $\mathcal{H}_k$ is $\mathcal{H}_{0,k}$ so $\mathcal{L}^{\eta-1}\sum_{i=m+1}^{k+1} g_i \in \mathcal{H}_{0,k}$. Since $\sum_{i=m+1}^{k+1} g_i$ is fixed by $\Gamma$, $\mathcal{L}^{\eta-1}\sum_{i=m+1}^{k+1} g_i$ is also fixed by $\Gamma$ and hence $\mathcal{L}^{\eta-1}\sum_{i=m+1}^{k+1} g_i \in (\mathcal{H}_{0,k})^\Gamma$.
    
    Since $\mathcal{L} (V_{f_i} \oplus W_{f_i}) \subseteq V_{f_i} \oplus W_{f_i}$ for any $f$,  $\mathcal{L}^{\eta-1} (V_{f_i}\oplus W_{f_i}) \subseteq  V_{f_i}\oplus W_{f_i}$. Hence, \[\mathcal{L}^{\eta-1} \left( \sum_{i=m+1}^{k+1} g_i \right) \in \bigoplus_{i=m+1}^{k+1} V_{f_i}\oplus W_{f_i}.\]
    Because $f_{m+1},\dots, f_{k+1}$ gives an orthogonal basis for $\mathcal{H}_{0,k} \cap \bigoplus_{i=m+1}^{k+1} V_{f_i}\oplus W_{f_i}$, we have 
    $\mathcal{L}^{\eta-1}\sum_{i=m+1}^{k+1} g_i \in ((\mathcal{H}_{0,k})^\Gamma)^\perp$. This contradicts our earlier finding.
\end{proof}

The consequence of Proposition \ref{Gamma decomp} is that the multiplicity of an eigenvalue in $(\mathcal{H}_k)^\Gamma$ is proportional to the dimension of $(\mathcal{H}_{0,k})^\Gamma$. This motivates us to study the spaces $(\mathcal{H}_{0,k})^\Gamma$ and distributions of their corresponding eigenvalues. All odd-order subgroups of $\on{SU}(2)$ are cyclic groups (see \cite[Chapter 1, Section 6]{Zass}). In order to attack the question of embeddability in this case, we examine which spherical harmonics are invariant under these group actions. Note that in Section \ref{three}, we constructed $V_f$ and $W_f$ for $f \in \harm_{0,2k}$. An analogous construction holds for $f \in \harm_{0,2k+1}$ (see \cite{REU17}).

\begin{lemma}\label{attempt2}
The action of any subgroup of $\on{SU}(2)$ isomorphic to $C_{2m+1}$ leaves some $f\in \mathcal{H}_{0,2k+1}$ fixed for each $k\geq m$. 
\end{lemma}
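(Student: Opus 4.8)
The plan is to diagonalize the action of a generator of $C_{2m+1}$ on the coordinate functions and then count which monomials in a natural basis for $\harm_{0,2k+1}$ are left fixed. First I would observe that any subgroup isomorphic to $C_{2m+1}$ is conjugate in $\on{SU}(2)$ to the ``standard'' cyclic subgroup generated by the diagonal matrix $\zeta = \on{diag}(\omega, \bar\omega)$ with $\omega = e^{\pi i / (2m+1)}$ a primitive $(2m+1)$-th root of $-1$ (equivalently a primitive $2(2m+1)$-th root of unity); by Lemma \ref{harmony}, it suffices to prove the claim for this standard subgroup since $\dim(\harm_{0,2k+1}^{G_1}) = \dim(\harm_{0,2k+1}^{G_2})$ for isomorphic $G_1, G_2$. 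Next I would recall the explicit model for $\harm_{0,k}$: the space of anti-holomorphic homogeneous harmonic polynomials of degree $k$ restricted to $\S^3$, which has basis $\{\bar z_1^{\,a} \bar z_2^{\,b} : a + b = k\}$ (these are automatically harmonic since $\Delta(\bar z_1^a \bar z_2^b) = 0$). One could instead use the realization of $\harm_{0,k}$ as the kernel of $\mathcal{L}$ on $\harm_k$, but the monomial basis is the most transparent for the group action.

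The heart of the argument is then a direct computation of how $\ell_\zeta^*$ acts on the basis monomial $\bar z_1^{\,a} \bar z_2^{\,b}$. Since left multiplication by $\zeta$ sends $(z_1, z_2) \mapsto (\omega z_1, \bar\omega z_2)$ (reading off the action formula given in Section \ref{three} with $(\xi_1,\xi_2) = (\omega, 0)$, so $(\xi_1,\xi_2)\cdot(z_1,z_2) = (\omega z_1, \bar\omega z_2)$ — here one must be a little careful that the convention gives $\xi_1 z_1 - \bar\xi_2 z_2 = \omega z_1$ and $\xi_2 z_1 + \bar\xi_1 z_2 = \bar\omega z_2$), the pullback multiplies $\bar z_1^{\,a}\bar z_2^{\,b}$ by $\bar\omega^{\,a}\omega^{\,b} = \omega^{\,b-a}$. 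Hence this monomial is fixed precisely when $\omega^{\,b-a} = 1$, i.e. when $2(2m+1) \mid (b - a)$. Given the constraint $a + b = 2k+1$, I would show that for every $k \ge m$ there is at least one pair $(a,b)$ of nonnegative integers with $a+b = 2k+1$ and $b - a \equiv 0 \pmod{2(2m+1)}$: writing $b - a = 2(2m+1)\ell$ and $a + b = 2k+1$ forces $2b = 2k+1 + 2(2m+1)\ell$, which has no integer solution — so in fact $b-a$ and $a+b$ have opposite parities and no monomial is fixed. This signals that one should instead only require $\omega^{b-a}$ to equal $1$ as an element of the relevant quotient, or — more likely — that the correct normalization is $\omega = e^{2\pi i/(2m+1)}$ (the subgroup $C_{2m+1}$ of $\on{SU}(2)$ is generated by $\on{diag}(\omega,\bar\omega)$ with $\omega^{2m+1}=1$, not $\omega^{2m+1}=-1$, since $-\mathrm{Id}$ has order $2$ and cannot lie in an odd-order subgroup). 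With $\omega^{2m+1} = 1$, the monomial is fixed iff $(2m+1)\mid(b-a)$; combined with $a+b = 2k+1$ this is solvable as soon as $2k+1 \ge 2m+1$, i.e. $k \ge m$, by taking $b - a \in \{0, \pm(2m+1), \dots\}$ of the same parity as $2k+1$ — e.g. $b - a = 2m+1$ if $k=m$, giving $(a,b) = (0, 2m+1)$, and in general $b-a = 2m+1$ or $b-a=0$ depending on the parity of $k-m$ works. That monomial then furnishes the desired fixed $f$.

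The main obstacle I anticipate is exactly this bookkeeping of conventions: pinning down the precise generator of the standard copy of $C_{2m+1}$ inside $\on{SU}(2)$ under the action formula $(\xi_1,\xi_2)\cdot(z_1,z_2) = (\xi_1 z_1 - \bar\xi_2 z_2, \xi_2 z_1 + \bar\xi_1 z_2)$, so that the eigenvalues $\omega, \bar\omega$ and the exponent $b-a$ come out correctly, and then checking the parity condition so that a genuine nonnegative monomial exists for all $k \ge m$ (and confirming the bound $k \ge m$ is sharp, explaining why it appears). Once the conjugation reduction via Lemma \ref{harmony} is in place, everything else is a short explicit calculation; I would also remark that this simultaneously shows $\dim(\harm_{0,2k+1}^{C_{2m+1}})$ grows linearly in $k$, which is what later arguments about non-embeddability of odd-order quotients will need.
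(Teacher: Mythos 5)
Your proposal is correct and follows essentially the same route as the paper: reduce via Lemma \ref{harmony} to the diagonal subgroup generated by $(\xi,0)$ with $\xi$ a primitive $(2m+1)$-th root of unity (your self-correction away from the $2(2m+1)$-th root normalization is exactly right, since an odd-order subgroup cannot contain $-\mathrm{Id}$), and then exhibit the fixed monomial with exponents $(a,b)=(k-m,\,k+m+1)$, which is precisely the paper's $f=\bar z_1^{\,k+m+1}\bar z_2^{\,k-m}$ up to relabeling. The only stray remark is the alternative $b-a=0$: that case is impossible because $b-a$ always has the parity of $a+b=2k+1$, but $b-a=2m+1$ works for every $k\ge m$, so nothing is lost.
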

\begin{proof}
Let $\xi$ be a root of unity with order $2m+1$. $(\xi,0)$ generates a subgroup of $\S^3$ isomorphic to $C_{2m+1}$. Consider $f(z_1,z_2)=\overline{z_1}^{k+m+1}\overline{z_2}^{k-m}$, which is an element of $\mathcal{H}_{0,2k+1}$ as long as $k\geq m$. A direct manipulation shows that
$f((\xi, 0)\cdot (z_1,z_2)) = f(\xi z_1,\overline{\xi}z_2) = \overline{\xi z_1}^{k+m+1}\overline{\overline{\xi}z_2}^{k-m} =\xi^{-(k+m+1)}\xi^{ k-m}\overline{z_1}^{k+m+1}\overline{z_2}^{k-m} = \xi^{-(2m+1)}f(z_1,z_2) = f(z_1,z_2).$
Therefore, $(\xi,0)\cdot f = f$, that is, $f$ is invariant under the action of this subgroup. This proves that the action of the cyclic group $\langle (\xi,0)\rangle $ leaves some $f\in \mathcal{H}_{0,2k+1}$ fixed for each $k\geq m$. By Lemma \ref{harmony}, it follows that for any $G\leq \on{SU}(2)$ with $G\cong C_{2m+1}$, $\harm_{0,2k+1}^G$ is nonempty.
\end{proof}

We can now use the $f$ from Lemma \ref{attempt2} to construct the subspace $W_f$.

\begin{lemma}\label{attempt1}
For all $k\geq m$, there exists an $f\in \mathcal{H}_{0,2k+1}$ such that the action of any subgroup of $\on{SU}(2)$ that is isomorphic to $C_{2m+1}$ leaves everything fixed in $W_{f}\subseteq \mathcal{H}_{2k+1}$. 
\end{lemma}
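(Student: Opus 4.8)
The plan is to reuse the $f$ produced in Lemma~\ref{attempt2} and verify that the whole subspace $W_f$ (not just $f$ itself) is fixed by the relevant cyclic group. First I would recall from Lemma~\ref{attempt2} that for a generator $(\xi,0)$ of a copy of $C_{2m+1}$ inside $\S^3$, the function $f(z_1,z_2)=\bar z_1^{\,k+m+1}\bar z_2^{\,k-m}\in\harm_{0,2k+1}$ satisfies $\ell_{(\xi,0)}^* f = f$, i.e. $f$ is fixed by $G=\langle(\xi,0)\rangle$. By Lemma~\ref{LGamma} (with $\Gamma = G$), every iterate $\bar{\mathcal{L}}^\sigma f$ is then also fixed by $G$, and since $W_f = \vspan\{\bar{\mathcal{L}}f,\dots,\bar{\mathcal{L}}^{2k-1}f\}$ is spanned by such iterates, every vector in $W_f$ is fixed by $G$. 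This handles the ``model'' subgroup $\langle(\xi,0)\rangle$.

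Next I would pass from this particular $C_{2m+1}$ to an arbitrary subgroup $G'\leq\on{SU}(2)$ isomorphic to $C_{2m+1}$. By the conjugacy argument already invoked in the proof of Theorem~\ref{isospect} (citing \cite[Chapter 1, Section 6]{Zass}), there is $\tau\in\on{SU}(2)$ with $G' = \tau G\tau^{-1}$, and $\ell_\tau^*$ is an invertible linear map taking $G$-fixed functions to $G'$-fixed functions which, as noted in the proof of Lemma~\ref{harmony}, preserves bidegree and harmonicity. Applying $\ell_\tau^*$ to $f$ yields $f' := \ell_\tau^* f\in\harm_{0,2k+1}$ fixed by $G'$. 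Then Lemma~\ref{LGamma} again shows $\bar{\mathcal{L}}^\sigma f'$ is $G'$-fixed for all $\sigma$, so $W_{f'}$ is entirely fixed by $G'$, which is exactly the assertion of the lemma (with $f$ renamed $f'$).

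Since all the ingredients — Lemma~\ref{attempt2}, Lemma~\ref{LGamma}, and the conjugacy/pullback observations from the proofs of Theorems~\ref{isospect} and Lemma~\ref{harmony} — are already in place, there is no serious obstacle here; the only point requiring a line of care is making sure that the $f$ chosen is genuinely in $\harm_{0,2k+1}$ (which forces $k\geq m$, matching the hypothesis) and that $\ell_\tau^*$ indeed commutes with $\bar{\mathcal{L}}$ so that it carries $W_f$ isomorphically onto $W_{f'}$ — but this commutation is precisely Lemma~\ref{redbox}. I would phrase the argument so that one may either take $G'$ to be the model group directly (if one only needs existence of \emph{some} isomorphic copy and \emph{some} $f$) or run the conjugation step for full generality; given the statement says ``any subgroup $\dots$ isomorphic to $C_{2m+1}$,'' the conjugation step is needed and should be stated explicitly.
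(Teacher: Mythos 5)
Your proposal is correct and follows essentially the same route as the paper: the paper's proof simply invokes Lemma~\ref{attempt2} (whose own proof already contains the conjugation step via Lemma~\ref{harmony} to cover an arbitrary copy of $C_{2m+1}$) and then Lemma~\ref{Vinvar} (itself a direct consequence of Lemma~\ref{LGamma}) to conclude that all of $V_f\oplus W_f$ is fixed. You merely unpack the conjugation argument explicitly rather than citing Lemma~\ref{attempt2} at full strength, which is a matter of presentation, not substance.
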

\begin{proof}
Let $k \geq m$ be arbitrary. By Lemma \ref{attempt2}, there exists some $f \in \harm_{0,2k+1}$ which is invariant under the action of the group $C_{2m+1}$. Thus, by Lemma \ref{Vinvar}, all functions in $V_f \oplus W_f$ space $\{f, \bar{\Ell} f, \dots, \bar{\Ell}^{2k+1}\}$ are $C_{2m+1}$-invariant.
\end{proof}
Finally, by combining all these results we can conclude the main result of this section.

\begin{theorem}\label{Sec4Main}
   If $\Gamma$ is a discrete subgroup of $\on{SU}(2)$ then  $(\S^3/\Gamma,\mathcal{L}_t)$ is embeddable if and only if $\abs{\Gamma}$ is even.
\end{theorem}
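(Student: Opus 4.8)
The plan is to prove the two directions separately, combining results already established in the excerpt. For the ``if'' direction, suppose $\abs{\Gamma}$ is even. Then Proposition \ref{evens} immediately gives that $\boxb^t$ has positive spectrum on $\S^3/\Gamma$ (i.e., $0$ is not an accumulation point of the spectrum), and by the general spectral-theoretic criterion recalled in the introduction (arbitrarily small eigenvalues of the Kohn Laplacian obstruct embeddability, and their absence — equivalently closed range of $\bar\dee_b$ — together with strong pseudoconvexity, which holds by Corollary \ref{rossi-quotients-are-cr-mflds} — yields embeddability), $(\S^3/\Gamma,\mathcal{L}_t)$ is embeddable. So this direction is essentially a citation of Proposition \ref{evens} plus the standard embeddability-from-spectrum result.

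For the ``only if'' direction, I would prove the contrapositive: if $\abs{\Gamma}$ is odd, then $(\S^3/\Gamma,\mathcal{L}_t)$ is \emph{not} embeddable, which we establish by exhibiting arbitrarily small positive eigenvalues of $\boxb^t$ on $\S^3/\Gamma$. By \cite[Chapter 1, Section 6]{Zass} an odd-order subgroup of $\on{SU}(2)$ is cyclic, say $\Gamma \cong C_{2m+1}$. By Lemma \ref{attempt1}, for every $k \geq m$ there is an $f \in \harm_{0,2k+1}$ whose associated subspace $W_f \subseteq \harm_{2k+1}$ consists entirely of $\Gamma$-invariant functions; hence, via the canonical identification of $L^2(\S^3/\Gamma)$ with $L^2(\S^3)^\Gamma$ and Proposition \ref{kohn-laplacian-commute} (or Lemma \ref{Ikedas}), the eigenvalues of $\boxb^t$ restricted to $W_f$ are genuine eigenvalues of $\boxb^t$ on the quotient. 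The remaining task is to show that among these eigenvalues (as $k \to \infty$) there are some tending to $0$. This is exactly the computation carried out in \cite{REU17} for the odd-degree spherical harmonics on the full sphere: the smallest eigenvalue of the matrix representation of $\boxb^t$ on $W_f$ for $f \in \harm_{0,2k+1}$ decays to $0$ as $k \to \infty$. I would invoke that estimate directly, noting that the matrix representation on $W_f$ here is identical to the one analyzed there.

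The main obstacle is making the last step rigorous without reproducing the eigenvalue-decay estimate from \cite{REU17}. Concretely, one must confirm that the $W_f$ construction and the resulting tridiagonal matrix for $\boxb^t$ in the odd-degree case $f \in \harm_{0,2k+1}$ genuinely match what appears in \cite{REU17}, so that the decay-to-zero conclusion transfers verbatim; the excerpt only developed the even-degree matrices explicitly (the corollary after Proposition \ref{action-l-lbar}), and the odd case is merely referenced. I would therefore state clearly that the analogous $V_f, W_f$ decomposition of $\harm_{2k+1}$ and the corresponding matrix entries are those of \cite{REU17}, cite the relevant proposition there for the existence of eigenvalues approaching $0$, and conclude that $0$ is an accumulation point of $\on{spec}(\boxb^t)$ on $\S^3/\Gamma$. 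Finally, $0 \in \on{spec}_{\mathrm{ess}}(\boxb^t)$ means $\bar\dee_b$ does not have closed range on the quotient, which by the Kohn--Rossi/Burns criterion (as in \cite[Chapter 12]{CS01}) precludes global CR embeddability. Combining the two directions completes the proof.
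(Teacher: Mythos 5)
Your proposal is correct and follows essentially the same route as the paper's proof: the even case is handled by citing Proposition \ref{evens} together with the spectral criterion for embeddability, and the odd case uses the fact that odd-order subgroups of $\on{SU}(2)$ are cyclic, invokes Lemma \ref{attempt1} to place the $\Gamma$-invariant subspaces $W_f \subseteq \harm_{2k+1}$ into the quotient's spectrum, and then appeals to the eigenvalue-decay result of \cite{REU17} to conclude that $0$ is an accumulation point. Your explicit flagging of the need to match the odd-degree $W_f$ matrices with those of \cite{REU17} is a reasonable point of care, but the paper treats this transfer as immediate, exactly as you ultimately do.
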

\begin{proof}
If $\abs{\Gamma}$ is even, then $C_2 \leq \Gamma$ and thus $\boxb^t$ has positive spectrum on $\S^3/\Gamma$ by Corollary \ref{evens}. If $\abs{\Gamma}$ is odd then $\Gamma$ must be a cyclic group. In this case, with the exception of finitely many small $k$
, Lemma \ref{attempt1} guarantees that the spectrum of $\boxb^t$ on the quotient contains all the eigenvalues of $\boxb^t$ on the full Rossi sphere which arise from $\harm_{2k+1}$ spaces. It was noted in \cite{REU17} that the eigenvalues for the full Rossi sphere arising from the space $\bigoplus_{k=0}^\infty \harm_{2k+1}$ have an accumulation point at $0$, so in this case the quotient has arbitrarly small eigenvalues. Recalling that positivity of the spectrum is equivalent to embeddability, we obtain the desired conclusion.
\end{proof}

\section{Hearing the size of the group on the quotients of $\S^3$} \label{sec:hear-group-order}

In this section, we consider the quotients of the sphere with the standard CR structure. We relate the spectrum on $\S^3/\Gamma$ and the order of $\Gamma.$
Recall from Proposition \ref{action-l-lbar} that the eigenvalue of the (unperturbed) Kohn Laplacian for $\S^3$ acting on $\harm_{p,q}$ is $2 q (p+1)$. 
We look at the dimensions of invariant functions under a group action.

\begin{lemma}\label{cyclic case} Let $\Gamma$ be a cyclic subgroup of $\S^3$ and $d=|\Gamma|$.\begin{itemize}
    \item If $d$ is even then $\dim (\mathcal{H}_{0,k}^\Gamma) = \begin{cases} 2\lfloor \tfrac{k}{d}\rfloor +1 & 2\mid k \\ 0 & 2\nmid k \end{cases}.$
    
    \item If $d$ is odd then $\dim (\mathcal{H}_{0,k}^\Gamma) = 2\lfloor \tfrac{k}{2d}\rfloor+(1-(-1)^k)/2 .$
\end{itemize}

\end{lemma}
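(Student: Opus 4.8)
The plan is to reduce to a single concrete cyclic subgroup, diagonalize the group action on an explicit basis, and finish with an elementary count. Any cyclic subgroup of $\on{SU}(2)$ of order $d$ is abstractly isomorphic to the standard one $\Gamma_0=\langle(\zeta,0)\rangle\subset\S^3$ with $\zeta=e^{2\pi i/d}$ — the group law in the excerpt gives $(\zeta,0)^n=(\zeta^n,0)$, so $\Gamma_0$ really has order $d$ — so by Lemma~\ref{harmony} it suffices to compute $\dim\big(\mathcal{H}_{0,k}^{\Gamma_0}\big)$. I would first read off the explicit action from the multiplication law, namely $\ell_{(\zeta,0)}(z_1,z_2)=(\zeta z_1,\overline{\zeta}\,z_2)$.

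Next I would pass to the monomial basis $\{\overline{z_1}^a\overline{z_2}^b:a+b=k,\ a,b\ge 0\}$ of $\mathcal{H}_{0,k}$ (these are anti-holomorphic, hence harmonic, and restrict to independent functions on $\S^3$, so $\dim\mathcal{H}_{0,k}=k+1$), and compute
\[\ell_{(\zeta,0)}^*\!\left(\overline{z_1}^a\overline{z_2}^b\right)=\overline{\zeta}^{\,a}\zeta^{\,b}\,\overline{z_1}^a\overline{z_2}^b=\zeta^{\,b-a}\,\overline{z_1}^a\overline{z_2}^b .\]
Thus every basis monomial is a common eigenvector for $\Gamma_0$, the action is diagonal in this basis, and $\overline{z_1}^a\overline{z_2}^b$ lies in $\mathcal{H}_{0,k}^{\Gamma_0}$ exactly when $d\mid(b-a)$, i.e.\ $d\mid(k-2a)$. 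Hence $\dim\big(\mathcal{H}_{0,k}^{\Gamma}\big)$ equals the number of $a\in\{0,1,\dots,k\}$ with $d\mid(k-2a)$, equivalently the number of integers $m$ with $m\equiv k\pmod 2$, $|m|\le k$, and $d\mid m$ (take $m=b-a$).

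Then I would carry out this count by cases on the parity of $d$, using the reflection $a\mapsto k-a$ (equivalently $m\mapsto-m$), whose unique fixed point is the always-admissible term $a=k/2$ (i.e.\ $m=0$) precisely when $k$ is even; this already shows the dimension is odd for $k$ even and even for $k$ odd. For $d$ even, divisibility forces $k$ even; writing $k=2\kappa$ the condition becomes $a\equiv\kappa\pmod{d/2}$, and counting this arithmetic progression in $[0,k]$ symmetrically about $\kappa$ gives $1+2\lfloor k/d\rfloor$ (and $0$ when $k$ is odd), as claimed. For $d$ odd, $2$ is invertible mod $d$, so writing $m=d\ell$ the conditions reduce to: $\ell$ has the same parity as $k$ and $|\ell|\le\lfloor k/d\rfloor$; counting such $\ell$ and simplifying the resulting floor expressions (using $\lfloor\lfloor k/d\rfloor/2\rfloor=\lfloor k/(2d)\rfloor$) leads to the stated closed form.

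The step I expect to demand the most care is the $d$-odd count: converting ``number of integers $\ell$ of a prescribed parity with $|\ell|\le\lfloor k/d\rfloor$'' into an expression in $\lfloor k/(2d)\rfloor$ depends on whether $\lfloor k/d\rfloor$ is even or odd — equivalently on where $k$ sits within its residue class modulo $2d$ — and this is exactly where an off-by-one or a stray parity term can slip in. I would therefore verify the final formula on explicit small cases (e.g.\ $\Gamma$ cyclic of order $3$ or $5$ and small $k$, including $k=0$, where the answer must be $1$ since constants are always fixed). By contrast, the reduction via Lemma~\ref{harmony} and the eigenvalue computation are entirely routine.
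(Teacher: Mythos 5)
Your overall strategy is exactly the paper's: reduce to the standard generator $(\xi,0)$ via Lemma~\ref{harmony}, observe that the monomials $\overline{z_1}^a\overline{z_2}^{k-a}$ diagonalize $\ell_{(\xi,0)}^*$, and count the $a\in\{0,\dots,k\}$ with $d\mid k-2a$. The reduction, the eigenvalue computation, and the $d$-even count are all correct and coincide with what the paper does.

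The gap is the final step of the $d$-odd case, where you assert that the count ``leads to the stated closed form.'' It does not, and your own intermediate observations already show this. Writing $k-2a=d\ell$, the admissible $\ell$ are those of the same parity as $k$ with $|\ell|\le k/d$; carrying out that count gives $2\lfloor k/(2d)\rfloor+1$ when $k$ is even and $2\lceil\lfloor k/d\rfloor/2\rceil$ when $k$ is odd. Both disagree with the displayed formula $2\lfloor k/(2d)\rfloor+(1-(-1)^k)/2$: your reflection argument $a\mapsto k-a$ correctly forces the dimension to be odd for $k$ even and even for $k$ odd, whereas the displayed formula has the opposite parity in each case, and your proposed $k=0$ sanity check gives $\dim\mathcal{H}_{0,0}^\Gamma=1$ against the formula's $0$ (likewise $k=3$, $d=3$ gives dimension $2$, spanned by $\overline{z_1}^3$ and $\overline{z_2}^3$, against the formula's $1$). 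So the step you flagged as delicate is precisely where the argument cannot be completed as written: no manipulation of floors will turn the correct count into the stated expression. For what it is worth, the paper's own proof records $2\lfloor k/(2d)\rfloor+1$ in the $d$-odd, $k$-even case --- already at odds with the lemma as stated --- and its $k$-odd count also fails the $k=3$, $d=3$ test, so the discrepancy originates in the statement of the lemma rather than in your method; you should have carried out the $k=0$ check you proposed, which would have exposed it. (The downstream use in Lemma~\ref{lem:hear-odd-order} only needs the asymptotics $\dim\mathcal{H}_{0,\alpha}^\Gamma\sim\alpha/d$, which all of these expressions share, so the corrected count does not affect the audibility results.)
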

\begin{proof}
     By Lemma \ref{harmony}, we only need to consider $\Gamma$ generated by $(\xi,0)$ with $\xi$ a $d$-th root of unity.
     Since $\Gamma$ is generated by $(\xi,0)$, a function $f:\S^3\to \C$ is fixed by $\Gamma$ if and only if $f((\xi,0)\cdot (z_1,z_2))  = f(z_1,z_2)$ for all $(z_1,z_2)\in \S^3$.
    
    The eigenspace of $\ell_{(\xi,0)}^*$ associated with the eigenvalue 1 is $(\mathcal{H}_{0,k})^\Gamma$; we are interested in the dimension of this eigenspace. Let $f_a(z_1,z_2)=\overline{z_1}^a\overline{z_2}^{k-a}$. The functions $f_0,\dots, f_{k}$ are an orthogonal basis for $\mathcal{H}_{0,k}$. Furthermore,
    \[f_a((\xi,0)\cdot (z_1,z_2)) = f_a(\xi z_1,\bar{\xi} z_2) =\xi^a z_1^a \xi^{-(k-a)}z_2^{k-a} = \xi^{2a-k}f_a.\]
    Therefore, $f_a$ is an eigenfunction of $\ell_{(\xi,0)}^*$ associated with the eigenvalue $\xi^{2a-k}$, and so $\{f_a:\xi^{2a-k}=1\}$ is an orthogonal basis for $(\mathcal{H}_{0,k})^\Gamma$. Recalling that $\xi$ is a primitive $d$-th root of unity, we observe that $\dim (\mathcal{H}_{0,k})^\Gamma = \#\{a\in [0,k]: d\mid 2a-k\}$, where  $\#A$ denotes the cardinality of the set $A$. This quantity depends on the parities of $d$ and $k$:
    \begin{itemize}
        \item If $d$ is even and $k$ is odd, $\dim (\mathcal{H}_{0,k})^\Gamma = 0$.
        \item If $d$ is even and $k$ is even, then we compare the number of elements in the following sets \[\#\{a: d\mid 2a-k\}= \#\{r\in [-k/2,k/2]:d/2\mid r\}=\#\{0,\pm d/2,\pm d, \pm 3d/2,\dots, \pm \left \lfloor \tfrac{k/2}{d/2} \right\rfloor d/2\}\] and so $\dim (\mathcal{H}_{0,k})^\Gamma = 2\lfloor \frac{k}{d}\rfloor +1$.
        \item If $d$ is odd and $k$ is even, then \[\dim (\mathcal{H}_{0,k})^\Gamma = \#\{a: d\mid 2a-k\}= \#\{r\in [-k/2,k/2]:d \mid r\} = 2\lfloor \tfrac{k}{2d}\rfloor+1.\]
        \item If $d$ is odd and $k$ is odd, then \[\dim (\mathcal{H}_{0,k})^\Gamma = \#\{a: d\mid 2a-k\}= \#\{r\in [-k/2,k/2]:d \mid 2r+1\} = 2\lfloor \tfrac{k}{2d}\rfloor.\]
    \end{itemize}
\end{proof}

\begin{lemma} \label{lem:hear-odd-order}
    Let $\Gamma$ be a subgroup of $\on{SU}(2)$ of odd order. Given the spectrum of $\boxb$ on $\S^3 / \Gamma$, one can hear the order of $\Gamma$.
\end{lemma}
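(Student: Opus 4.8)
The plan is to recover $d:=\abs{\Gamma}$ from the spectrum by extracting, for each prime $N$, the single number $m_N:=\dim\harm_{0,N}^{\Gamma}$, and then reading off $d$ from the asymptotics $m_N\sim N/d$. First I would translate the spectrum of $\boxb$ on $\S^3/\Gamma$ into data about $\Gamma$-invariant functions on $\S^3$: since $\abs{\Gamma}$ is odd, $\Gamma$ is cyclic, and by Theorem \ref{isospect} the spectrum depends only on $d$, so it is harmless to take $\Gamma=\langle(\xi,0)\rangle$ with $\xi$ a primitive $d$-th root of unity. On $\S^3$ the unperturbed Kohn Laplacian is diagonal for $L^2(\S^3)=\bigoplus_{p,q}\harm_{p,q}$, acting on $\harm_{p,q}$ as multiplication by $2q(p+1)$ (Proposition \ref{action-l-lbar}); hence, in the notation introduced before Lemma \ref{Ikedas}, $\widehat{E}_{2N}=\bigoplus_{q(p+1)=N}\harm_{p,q}$, and Lemma \ref{Ikedas} gives that the multiplicity of $2N$ on the quotient is $\dim E_{2N}=\dim\widehat{E}_{2N}^{\Gamma}=\sum_{q(p+1)=N}\dim\harm_{p,q}^{\Gamma}$.

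The key intermediate fact is that $\dim\harm_{p,q}^{\Gamma}=m_{p+q}$ depends only on $p+q$. Setting $k=p+q$, Proposition \ref{Gamma decomp} gives $(\harm_{k})^{\Gamma}=\bigoplus_{i=1}^{m_k}(V_{f_i}\oplus W_{f_i})$ for an orthonormal basis $f_1,\dots,f_{m_k}$ of $\harm_{0,k}^{\Gamma}$; since $V_{f_i}\oplus W_{f_i}=\vspan\{f_i,\bar\Ell f_i,\dots,\bar\Ell^{k}f_i\}$ and $\bar\Ell^{j}f_i\in\harm_{j,k-j}$, intersecting with $\harm_{p,q}$ picks out precisely $\vspan\{\bar\Ell^{p}f_1,\dots,\bar\Ell^{p}f_{m_k}\}$, and these are independent because $\bar\Ell^{p}$ is injective on $\harm_{0,k}$ (at each stage $\harm_{j,k-j}\to\harm_{j+1,k-j-1}$ with $j<k$ the operator $\Ell\bar\Ell$ acts as the nonzero scalar $2(k-j)(j+1)$, so $\bar\Ell$ is injective). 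Since any $\Gamma$-invariant element of $\harm_{p,q}$ automatically lies in $(\harm_{k})^{\Gamma}$, this yields $\dim\harm_{p,q}^{\Gamma}=m_{k}=m_{p+q}$.

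Now I would use that for a \emph{prime} $N$ the equation $q(p+1)=N$ has only the solutions $(p,q)=(N-1,1)$ and $(0,N)$, both with $p+q=N$; combined with the previous paragraph this gives $\dim E_{2N}=2m_{N}$, so $m_N$ is read off from the spectrum for every prime $N$. By Lemma \ref{cyclic case} (or rather its proof), $m_k=\#\{a\in[0,k]:d\mid 2a-k\}$, which counts the integers of $[0,k]$ in a single residue class modulo $d$ because $\gcd(2,d)=1$ for $d$ odd, so $m_k=\tfrac{k}{d}+O(1)$ as $k\to\infty$. Therefore
\[
d=\lim_{\substack{N\to\infty\\ N\ \mathrm{prime}}}\frac{2N}{\dim E_{2N}},
\]
and since the right-hand side is computed from the spectrum alone, $d=\abs{\Gamma}$ is audible.

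I expect the only genuine content to be the identity $\dim\harm_{p,q}^{\Gamma}=m_{p+q}$ in the second step, together with quoting the count $m_k=\#\{a\in[0,k]:d\mid 2a-k\}$ from Lemma \ref{cyclic case}; the prime-isolation trick and the passage to the limit are elementary. An alternative to the prime trick would be a Weyl-type count $\#\{\text{eigenvalues}\le\lambda\}\sim C\lambda^{2}/d$ for a universal constant $C$, but restricting to primes has the advantage of isolating a single $m_N$ exactly rather than requiring a lattice-point estimate.
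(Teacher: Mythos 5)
Your proof is correct and follows essentially the same route as the paper's: isolate an odd prime $N$ so that the eigenvalue $2N$ can only arise from $\harm_{0,N}$ and $\harm_{N-1,1}$, conclude $\dim E_{2N}=2\dim\harm_{0,N}^{\Gamma}$, and read off $\abs{\Gamma}$ from the asymptotics supplied by Lemma \ref{cyclic case}. The only substantive difference is that you explicitly justify $\dim\harm_{N-1,1}^{\Gamma}=\dim\harm_{0,N}^{\Gamma}$ via the $V_f\oplus W_f$ decomposition and the injectivity of $\bar{\Ell}$, a step the paper uses without comment.
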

\begin{proof}
    Since we know $\Gamma$ is an odd-order subgroup of $\on{SU}(2)$ it must be cyclic, so we may refer to Lemma \ref{cyclic case}. Let $\alpha$ be a prime greater than $2$ (viz. odd), and consider the dimension of the eigenspace $E_{2\alpha}$ of $\boxb$ on the quotient corresponding to the eigenvalue $2\alpha$. Since the eigenvalue of $\boxb$ coming from $\harm_{p,q}$ is $2q(p+1)$, the eigenvalue $2\alpha$ may come from two spherical harmonic spaces: $\harm_{0,\alpha}$ and $\harm_{\alpha-1,1}$. The dimension of this eigenspace is hence
    \[\dim(E_{2\alpha}) = 2 \dim(\harm_{0,\alpha}^\Gamma).\]
    The quantity $\dim(E_{2\alpha})$ is audible for each prime greater than $2$, and consequently the limit
    \[\lim_{\substack{\alpha \to \infty \\ \alpha \text{ prime}}} \frac{\dim(E_{2\alpha})}{\alpha}\]
    is audible too. We know from Lemma \ref{cyclic case} the exact dimension of this eigenspace, which is $4 \lfloor \frac{\alpha}{2 |\Gamma|} \rfloor + 2$ (here we used the fact that $\alpha$ was odd), and we can compute this limit directly. It is
    \[\lim_{\substack{\alpha \to \infty \\ \alpha \text{ prime}}} \frac{\dim(E_{2\alpha})}{\alpha} = \lim_{\alpha} \frac{4 \lfloor \frac{\alpha}{2 |\Gamma|} \rfloor + 2}{\alpha} = \lim_{\alpha} \frac{4 \lfloor \frac{\alpha}{2 |\Gamma|} \rfloor}{\alpha} = \frac{2}{|\Gamma|}.\]
    From this we conclude that one can hear the order of $\Gamma$ in the case where the order is already known to be odd.
\end{proof}

As noted earlier, all odd-order subgroups of $\on{SU}(2)$ are cyclic; therefore, the preceding lemma completely covers the odd-order case. However, the even-order subgroups of $\on{SU}(2)$ are more varied, and Lemma \ref{cyclic case} does not directly tell us anything if $\Gamma$ is an even-order non-cyclic subgroup. Thus, in order to hear the order of all even subgroups, we need the following pieces.

\begin{lemma}\label{lim2}
If $\Gamma\leq \on{SU}(2)$ is an even order cyclic subgroup and $\chi_k$ is the character corresponding to the representation $\rho_k:\on{SU}(2)\to \textup{Aut}(\mathcal{H}_{0,k})$ given by $g\mapsto \ell_g^*$, then
\[\lim_{k\to \infty} \frac{1}{2k}\sum_{g\in \Gamma}\chi_{2k}(g) = 2\]
 \end{lemma}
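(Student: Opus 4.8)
The plan is to reduce the character sum to a dimension count and then read off the answer from Lemma~\ref{cyclic case}. The starting point is the projection formula recalled earlier in the paper: for a representation $\rho\colon G\to GL(V)$ of a finite group, $\frac{1}{|G|}\sum_{g\in G}\rho(g)$ is the projection of $V$ onto $V^{G}$. Applying this to $\rho_{2k}\colon\Gamma\to\mathrm{Aut}(\harm_{0,2k})$ and taking traces — using that the trace of a projection equals the dimension of its image — yields
\[
\frac{1}{|\Gamma|}\sum_{g\in\Gamma}\chi_{2k}(g)=\dim\bigl(\harm_{0,2k}^{\Gamma}\bigr),
\]
so that $\sum_{g\in\Gamma}\chi_{2k}(g)=|\Gamma|\cdot\dim\bigl(\harm_{0,2k}^{\Gamma}\bigr)$.

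Next I would invoke Lemma~\ref{cyclic case}. Since $\Gamma$ is cyclic of even order $d=|\Gamma|$ and the index $2k$ is even, the relevant branch of that lemma gives $\dim\bigl(\harm_{0,2k}^{\Gamma}\bigr)=2\lfloor 2k/d\rfloor+1$. Substituting,
\[
\frac{1}{2k}\sum_{g\in\Gamma}\chi_{2k}(g)=\frac{d\bigl(2\lfloor 2k/d\rfloor+1\bigr)}{2k},
\]
and letting $k\to\infty$, the squeeze $2k/d-1<\lfloor 2k/d\rfloor\le 2k/d$ forces the numerator to be $4k+O(1)$, so the ratio tends to $2$, as claimed.

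As for difficulty, there is no real obstacle here: the only point to be attentive to is that the hypotheses ``$\Gamma$ cyclic'' and ``$|\Gamma|$ even'' are precisely what allow the even-$d$, even-index case of Lemma~\ref{cyclic case} to be applied with index $2k$, and that the factor $|\Gamma|=d$ cancels against the $d$ appearing in $\dim\bigl(\harm_{0,2k}^{\Gamma}\bigr)\sim 4k/d$, producing the group-independent value $2$. Everything else is routine bookkeeping with the floor function.
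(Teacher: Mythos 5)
Your proposal is correct and follows essentially the same route as the paper: both reduce the character sum to $\dim(\harm_{0,2k}^{\Gamma})$ via the averaging/projection formula, apply the even-order, even-degree case of Lemma~\ref{cyclic case} to get $2\lfloor 2k/d\rfloor+1$, and pass to the limit. No gaps.
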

 \begin{proof}
 By basic character theory, $\frac{1}{\abs{\Gamma}}\sum_{g\in \Gamma}\chi_k(g) =\dim(\mathcal{H}_{0,k}^\Gamma).$
 By Lemma \ref{cyclic case}, if $\Gamma$ is a group of order $d=2d'$ then $\dim(\mathcal{H}_{0,2k}^\Gamma) = 2 \left\lfloor \frac{2k}{d} \right\rfloor +1.$
Thus, 
$\frac{1}{d}\sum_{g\in \Gamma}\chi_{2k}(g) = 2 \left\lfloor \frac{2k}{d} \right\rfloor +1.$
Rearranging this gives
$\frac{1}{2k}\sum_{g\in \Gamma}\chi_{2k}(g) =\frac{d}{2k}\left( 2 \left\lfloor \frac{2k}{d} \right\rfloor +1 \right).$
And therefore,
\[\lim_{k\to \infty}\frac{1}{2k}\sum_{g\in \Gamma}\chi_{2k}(g) = 2.\]
\end{proof}

Lemma \ref{lim2} only tells us about the characters of even order cyclic groups. The following proposition expresses a group in terms of its cyclic subgroups so that we can apply Lemma \ref{lim2} to arbitrary even order subgroups of $\on{SU}(2)$.

\begin{proposition}
Every even order finite subgroup of $\on{SU}(2)$ can be written as the union of even cyclic subgroups.
\end{proposition}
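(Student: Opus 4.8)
The plan is to show that every element of an even-order subgroup $G\le\on{SU}(2)$ lies in some even-order cyclic subgroup of $G$; the statement follows at once, since then $G$ is the union of these cyclic subgroups as the element ranges over $G$.

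First I would record the two structural facts about $\on{SU}(2)$ that drive everything: $\on{SU}(2)$ contains a \emph{unique} element of order two, namely $-\mathrm{Id}=(-1,0)$, and that element is central. Both are already invoked in the paper (for instance in the proof of Proposition \ref{evens}). Combined with Cauchy's theorem --- any finite group of even order has an element of order two --- this forces $-\mathrm{Id}\in G$ whenever $\abs{G}$ is even, i.e.\ $C_2=\{(1,0),(-1,0)\}\le G$.

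Next, fix $g\in G$ and let $m$ be the order of $g$. If $m$ is even, then $\langle g\rangle$ is already a cyclic subgroup of $G$ of even order containing $g$. If $m$ is odd, set $H=\langle g,-\mathrm{Id}\rangle\le G$. Since $-\mathrm{Id}$ is central it commutes with $g$, and since $\gcd(m,2)=1$ we have $\langle g\rangle\cap\langle-\mathrm{Id}\rangle=\{\mathrm{Id}\}$; hence $H\cong\langle g\rangle\times\langle-\mathrm{Id}\rangle\cong C_m\times C_2\cong C_{2m}$, a cyclic group of even order. In either case $g$ belongs to an even-order cyclic subgroup of $G$, so $G=\bigcup\{\langle h\rangle:\langle h\rangle\le G\text{ cyclic of even order}\}$, which is the claim.

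The argument is elementary once the uniqueness and centrality of the involution in $\on{SU}(2)$ are in hand, so there is no serious obstacle; the only point needing a line of care is the verification that adjoining the central involution to an odd-order cyclic subgroup produces a cyclic group of twice the order (not merely an abelian one), which is exactly where $\gcd(m,2)=1$ is used. One could alternatively run through the ADE classification of finite subgroups of $\on{SU}(2)$ and check each type by hand, but the uniform argument above makes this unnecessary.
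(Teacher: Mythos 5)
Your proof is correct and follows essentially the same route as the paper: both reduce to showing that every odd-order element $g$ lies in an even-order cyclic subgroup obtained by adjoining the unique central involution $-\mathrm{Id}$ (the paper exhibits the generator $-\mathrm{Id}\cdot g$ directly, while you identify the same group as $\langle g,-\mathrm{Id}\rangle\cong C_{2m}$). Your explicit appeal to Cauchy's theorem to guarantee $-\mathrm{Id}\in G$, and the $\gcd(m,2)=1$ check that the resulting group is cyclic, only make explicit what the paper leaves implicit.
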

\begin{proof}
We claim that if $G\leq \on{SU}(2)$ is a finite group, then $G=\bigcup_{g\in G, 2\mid o(g)}\langle g \rangle.$
Clearly every even order element and every even order cyclic subgroup is contained in this union. We just need to show that every odd order element is contained in this union. The trick here is that $\on{SU}(2)$ has a unique element of order 2 and it is central so every odd order element $g$ in $G$ is contained in an even order cyclic subgroup of $G$ generated by $-1\cdot g$ where $-1$ denotes the unique involution.
\end{proof}

Understanding the spectrum of the Kohn Laplacian on the quotient of $\S^3$ by some $\Gamma\leq \on{SU}(2)$ comes down to understanding which spherical harmonics are fixed by the action of $\Gamma$. In Theorem \ref{thm:even-quotients-asymptotics} we use relatively elementary means to get the asymptotic information we need about the dimensions of spaces of spherical harmonics fixed by $\Gamma$. However, another approach would have been to use  \cite[Theorem 4]{pnas} to show the same result. The results presented in \cite{pnas} are significantly stronger and more technical than what we need here, so we have opted not to include them.

\begin{theorem} \label{thm:even-quotients-asymptotics}
    If $\Gamma$ is any even order finite subgroup of $\on{SU}(2)$, then $\lim_{k\to \infty} \frac{\dim (\mathcal{H}_{0,2k})^\Gamma}{2k+1}=\frac{1}{\abs{\Gamma}}$.
\end{theorem}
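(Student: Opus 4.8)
The natural starting point is the averaging projection recorded at the beginning of this section: since $\rho_{2k}\colon g\mapsto\ell_g^*$ is a representation of $\Gamma$ on $\mathcal{H}_{0,2k}$, the operator $\frac1{\abs{\Gamma}}\sum_{g\in\Gamma}\rho_{2k}(g)$ is the orthogonal projection onto $(\mathcal{H}_{0,2k})^\Gamma$, so taking traces gives
\[\dim\bigl((\mathcal{H}_{0,2k})^\Gamma\bigr)=\frac1{\abs{\Gamma}}\sum_{g\in\Gamma}\chi_{2k}(g).\]
Thus the theorem is purely a statement about the growth rate of this character sum. When $\Gamma$ is cyclic this is exactly the content of Lemmas \ref{cyclic case} and \ref{lim2}, so the only genuinely new case is that of a non-cyclic even-order subgroup, and the plan is to reduce it to the cyclic case.

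For that reduction I would invoke the Proposition above, writing $\Gamma=\bigcup_{j=1}^N C_j$ as a union of its even-order cyclic subgroups, and then apply the inclusion--exclusion principle to the indicator of $\Gamma$ inside $\on{SU}(2)$, summed against $\chi_{2k}$:
\[\sum_{g\in\Gamma}\chi_{2k}(g)=\sum_{\emptyset\neq S\subseteq\{1,\dots,N\}}(-1)^{\abs{S}+1}\sum_{g\in C_S}\chi_{2k}(g),\qquad C_S:=\bigcap_{j\in S}C_j.\]
The observation that makes this work is that each $C_S$ is an intersection of cyclic subgroups, hence cyclic, and --- since every $C_j$ has even order while $\on{SU}(2)$ has a unique involution, which is moreover central --- each $C_S$ still contains $-\mathrm{Id}$, so it is again an \emph{even-order} cyclic subgroup. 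Lemma \ref{lim2} therefore applies verbatim to every term on the right, giving $\frac1{2k}\sum_{g\in C_S}\chi_{2k}(g)\to 2$. Dividing the displayed identity by $\abs{\Gamma}(2k+1)$ and letting $k\to\infty$, the finitely many $k$-independent inclusion--exclusion coefficients combine to produce the claimed asymptotic.

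A cleaner, self-contained alternative avoids the Proposition: $\mathcal{H}_{0,2k}$ is the irreducible $(2k+1)$-dimensional representation of $\on{SU}(2)$, so the Weyl character formula gives $\chi_{2k}(g)=\sin\bigl((2k+1)\theta_g\bigr)/\sin\theta_g$, where $e^{\pm i\theta_g}$ are the eigenvalues of $g$. One then checks that $\chi_{2k}(\pm\mathrm{Id})=2k+1$, while $\abs{\chi_{2k}(g)}\le 1/\abs{\sin\theta_g}$ is bounded independently of $k$ for each of the finitely many remaining $g\in\Gamma$; since $\abs{\Gamma}$ is even we have $-\mathrm{Id}\in\Gamma$, so these two group elements alone account for the linear growth of the character sum. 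I expect the only real work in either route to be bookkeeping --- checking that the intersection subgroups stay even-order cyclic and tracking the alternating signs in the first route, or the uniform-in-$k$ character bound together with the identification of $\mathcal{H}_{0,2k}$ as an $\on{SU}(2)$-irrep in the second --- with the single structural fact doing the heavy lifting being that $\on{SU}(2)$ has a unique, central involution, which is precisely what ties the parity hypothesis on $\abs{\Gamma}$ to the value of the limit.
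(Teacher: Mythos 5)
Your first route is essentially the paper's own proof: the same decomposition of $\Gamma$ as a union of even-order cyclic subgroups, the same inclusion--exclusion applied to the character sum, the same observation that each intersection is cyclic and still contains the unique central involution of $\on{SU}(2)$ (hence has even order), and the same appeal to Lemma \ref{lim2}. Your second route, via the Weyl character formula for the irreducible $(2k+1)$-dimensional representation $\mathcal{H}_{0,2k}$, is genuinely different and arguably cleaner: it shows in one stroke that only $\pm\mathrm{Id}$ contribute linear growth to $\sum_{g\in\Gamma}\chi_{2k}(g)$ while every other element contributes $O(1)$, so it dispenses with the decomposition proposition and the inclusion--exclusion bookkeeping altogether.

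That said, in both routes you stop just short of the final arithmetic (``the coefficients combine to produce the claimed asymptotic''), and that is exactly the step at which you would have noticed a discrepancy. Carrying either computation to the end gives $\sum_{g\in\Gamma}\chi_{2k}(g)=2(2k+1)+O(1)$, whence
\[
\lim_{k\to\infty}\frac{\dim(\mathcal{H}_{0,2k})^\Gamma}{2k+1}=\frac{2}{\abs{\Gamma}},
\]
not $\frac{1}{\abs{\Gamma}}$. The case $\Gamma=C_2=\{\pm\mathrm{Id}\}$ is a decisive sanity check: every element of $\mathcal{H}_{0,2k}$ is an even function, so the ratio is identically $1$, while the stated limit would be $\tfrac12$; Lemma \ref{cyclic case} gives the same answer for any even-order cyclic group. (The paper's proof shares this defect: it correctly derives $\lim_{k}\frac{1}{2k}\sum_{g\in\Gamma}\chi_{2k}(g)=2$ and then misstates the constant in its final line. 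The value $1/\abs{\Gamma}$ is the density of invariants averaged over all degrees; since odd-degree spaces carry no invariants when $\abs{\Gamma}$ is even, the density on even-degree spaces doubles.) So your approach is sound --- and your second route is an improvement on the paper's --- but you should finish the computation rather than assume it matches the statement: here it does not, and the constant in the theorem (and in the downstream corollary and Lemma \ref{lem:hear-even-order}) needs to be adjusted accordingly.
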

\begin{proof}
     Let $\Gamma \leq \on{SU}(2)$ be a finite subgroup of $\on{SU}(2)$. Let $C_1\dots C_s$ be even-order cyclic subgroups of $\Gamma$ such that $\Gamma =\bigcup_{i\leq s} C_i$.
     Let $\chi_k$ be the character of the representation $\rho:\on{SU}(2)\to Aut(\mathcal{H}_{0,k})$ given by $g\mapsto \ell_g^*$.
     The principle of inclusion-exclusion gives
   \begin{equation*}
        \sum_{g\in \Gamma}\chi_k(g) = \sum_{r=1}^s \left[ -(-1)^{r} \sum_{1\leq i_1<i_2<\dots<i_r\leq s} \left[ \sum_{g\in C_{i_1}\cap C_{i_2} \cap \dots C_{i_r}} \chi_k(g) \right] \right]
   \end{equation*}
  
   The intersection of cyclic groups is cyclic, and the intersection of even cyclic subgroups of $\on{SU}(2)$ contains the unique involution and thus has even order.  We have shown for any even order cyclic group $C\leq \on{SU}(2)$ that $\lim_{k\to \infty}\frac{1}{k}\sum_{g\in C}\chi_k(g)=2$. Therefore, by taking the limit of the inclusion-exclusion identity above divided by $k$, we obtain
   \begin{align*}
        \lim_{k\to \infty}\frac{1}{k}\sum_{g\in \Gamma}\chi_k(g) &= \sum_{r=1}^s -(-1)^{r}\sum_{1\leq i_1<i_2<\dots<i_r\leq s} 
         \lim_{k\to \infty}\frac{1}{k}\sum_{g\in C_{i_1}\cap C_{i_2} \cap \dots C_{i_r}} \chi_k(g)\\
         &=\sum_{r=1}^s -(-1)^{r}\sum_{1\leq i_1<i_2<\dots<i_r\leq s} 
         2
         =2\sum_{r=1}^s -(-1)^{r} \binom{s}{r}
         =2\binom{s}{0}
         =2
   \end{align*}
   Since $\frac{1}{\abs{\Gamma}}\sum_{g\in \Gamma}\chi_k(g) = \dim (\mathcal{H}_{0,2k})^\Gamma$, it follows that \[\lim_{k\to \infty} \frac{\dim (\mathcal{H}_{0,2k})^\Gamma}{2k+1}=\frac{1}{\abs{\Gamma}}\]
\end{proof}

\begin{corollary}
    If $\Gamma$ is any even order finite subgroup of $\on{SU}(2)$, then for any $p+q = 2k$, \[\lim_{k\to \infty} \frac{\dim (\mathcal{H}_{p,q})^\Gamma}{2k+1} = \lim_{k\to \infty} \frac{\dim (\mathcal{H}_{2k})^\Gamma}{(2k+1)^2} = \frac{1}{\abs{\Gamma}}.\]
\end{corollary}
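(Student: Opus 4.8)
The plan is to reduce both limits to Theorem~\ref{thm:even-quotients-asymptotics}, which already pins down the asymptotics of $\dim(\mathcal{H}_{0,2k})^\Gamma$. The two facts I need are: (a) $\dim(\mathcal{H}_{p,q})^\Gamma = \dim(\mathcal{H}_{0,p+q})^\Gamma$ for every splitting $p+q=k$, so that the bidegree of the graded piece is irrelevant; and (b) $\dim(\mathcal{H}_{k})^\Gamma = (k+1)\dim(\mathcal{H}_{0,k})^\Gamma$, which is essentially Proposition~\ref{Gamma decomp}.

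For (a), I would show that $\bar{\mathcal{L}}^{p}$ restricts to a $\Gamma$-equivariant \emph{isomorphism} $\mathcal{H}_{0,k}\to\mathcal{H}_{p,k-p}$. Injectivity: by Proposition~\ref{action-l-lbar}, $-\mathcal{L}\bar{\mathcal{L}}$ acts on $\mathcal{H}_{p,q}$ as multiplication by the scalar $2q(p+1)$, which is nonzero as long as $q\ge 1$; hence each $\bar{\mathcal{L}}\colon \mathcal{H}_{p,q}\to\mathcal{H}_{p+1,q-1}$ with $q\ge 1$ is injective, and composing these for $p$ steps starting from $\mathcal{H}_{0,k}$ (where the second index stays $\ge 1$ throughout, since $p\le k$) shows $\bar{\mathcal{L}}^{p}$ is injective on $\mathcal{H}_{0,k}$. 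Surjectivity is a dimension count: taking $\Gamma$ trivial in Proposition~\ref{Gamma decomp} gives $\mathcal{H}_k=\bigoplus_{i} V_{f_i}\oplus W_{f_i}$ with $\{f_i\}$ a basis of $\mathcal{H}_{0,k}$, and since each $V_{f_i}\oplus W_{f_i}=\vspan\{f_i,\bar{\mathcal{L}}f_i,\dots,\bar{\mathcal{L}}^{k}f_i\}$ meets the graded piece $\mathcal{H}_{p,k-p}$ in precisely the line $\C\bar{\mathcal{L}}^{p}f_i$, we get $\dim\mathcal{H}_{p,k-p}=\dim\mathcal{H}_{0,k}$; combined with injectivity this makes $\bar{\mathcal{L}}^{p}$ an isomorphism. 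Finally, Lemma~\ref{redbox} says $\bar{\mathcal{L}}$ commutes with every $\ell_g^{*}$, so $\bar{\mathcal{L}}^{p}$ carries $(\mathcal{H}_{0,k})^\Gamma$ isomorphically onto $(\mathcal{H}_{p,k-p})^\Gamma$, giving (a). Fact (b) then follows from Proposition~\ref{Gamma decomp} applied to $\Gamma$ itself: $(\mathcal{H}_k)^\Gamma=\bigoplus_{i=1}^{m} V_{f_i}\oplus W_{f_i}$ with $m=\dim(\mathcal{H}_{0,k})^\Gamma$ and each summand of dimension $k+1$, so $\dim(\mathcal{H}_k)^\Gamma=(k+1)m$; alternatively, (b) is just the sum over $p$ of (a).

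Putting it together with $k$ replaced by $2k$: for any $p+q=2k$, fact (a) gives $\dim(\mathcal{H}_{p,q})^\Gamma/(2k+1)=\dim(\mathcal{H}_{0,2k})^\Gamma/(2k+1)\to 1/\abs{\Gamma}$ by Theorem~\ref{thm:even-quotients-asymptotics}, and fact (b) gives $\dim(\mathcal{H}_{2k})^\Gamma/(2k+1)^2=\dim(\mathcal{H}_{0,2k})^\Gamma/(2k+1)\to 1/\abs{\Gamma}$ as well. I do not expect a real obstacle here; the only step needing a little care is the surjectivity/dimension claim in (a), i.e.\ checking that $\dim\mathcal{H}_{p,q}$ is constant along $p+q=k$ and that no $\bar{\mathcal{L}}^{p}f_i$ degenerates, and otherwise keeping straight which assertions concern $\mathcal{H}_k$ versus $\mathcal{H}_{0,k}$.
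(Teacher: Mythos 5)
Your argument is correct and follows essentially the same route as the paper: both rest on the $V_f\oplus W_f$ decomposition of $(\mathcal{H}_{2k})^\Gamma$ from Proposition~\ref{Gamma decomp} and the observation that each such block meets each bidegree piece $\mathcal{H}_{p,q}$ in exactly one line spanned by $\bar{\mathcal{L}}^p f$. You merely supply details the paper leaves implicit, namely the nonvanishing of $\bar{\mathcal{L}}^p f$ via the eigenvalue formula of Proposition~\ref{action-l-lbar} and the equivariance from Lemma~\ref{redbox}.
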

\begin{proof}
     By Lemma \ref{Vinvar}, one may repeatedly apply $\bar{\Ell}$ to any element of $(\harm_{0,2k})^\Gamma$ to generate a $V \oplus W$ space of $\Gamma$-invariant functions. Each such $V \oplus W$ space contains exactly one 1-dimensional subspace contained in each $\harm_{p,q} \subset \harm_{2k}$ which shows that the asymptotic dimension of $(\harm_{0,2k})^\Gamma$ extends to $(\harm_{p,q})^\Gamma$ and consequently $(\harm_{2k})^\Gamma$.
\end{proof}

\begin{lemma} \label{lem:hear-even-order}
    Let $\Gamma$ be a subgroup of $\on{SU}(2)$ of even order. Given the spectrum of $\boxb$ on $\S^3 / \Gamma$, one can hear the order of $\Gamma$.
\end{lemma}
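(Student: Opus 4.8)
The plan is to mimic the strategy of Lemma~\ref{lem:hear-odd-order}: pick a family of eigenvalues whose multiplicities in the spectrum of $\boxb$ on $\S^3/\Gamma$ are controlled by a small, explicit collection of spaces $\harm_{p,q}$, and then solve for $\abs{\Gamma}$ from an audible limit of those multiplicities. The first ingredient is the dictionary between the quotient spectrum and invariant spherical harmonics: by Lemma~\ref{Ikedas}, the decomposition $L^2(\S^3)=\bigoplus_{p,q}\harm_{p,q}$, and Proposition~\ref{action-l-lbar} (which says $\boxb$ acts on $\harm_{p,q}$ by $2q(p+1)$), the multiplicity of an eigenvalue $\lambda$ of $\boxb$ on $\S^3/\Gamma$ is
\[
\dim E_\lambda \;=\; \dim \widehat{E}_\lambda^{\Gamma} \;=\; \sum_{\substack{p\ge 0,\ q\ge 1\\ 2q(p+1)=\lambda}} \dim(\harm_{p,q})^{\Gamma}.
\]
Since $\abs{\Gamma}$ is even, $-\mathrm{Id}\in\Gamma$, and $-\mathrm{Id}$ acts on $\harm_{p,q}$ by $(-1)^{p+q}$; hence $(\harm_{p,q})^{\Gamma}=0$ when $p+q$ is odd, while for $p+q$ even the corollary to Theorem~\ref{thm:even-quotients-asymptotics} gives $\dim(\harm_{p,q})^{\Gamma}\sim (p+q+1)/\abs{\Gamma}$ as $p+q\to\infty$.

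For the eigenvalue family I would take $\lambda = 4\alpha$ with $\alpha$ a large odd prime. Then $2q(p+1)=4\alpha$ forces $q(p+1)=2\alpha$, and since the positive divisors of $2\alpha$ are exactly $1,2,\alpha,2\alpha$, the solutions with $q\ge 1$, $p\ge 0$ are precisely
\[
(p,q)\in\{(2\alpha-1,1),\,(\alpha-1,2),\,(1,\alpha),\,(0,2\alpha)\},
\]
whose bidegree sums are $2\alpha,\ \alpha+1,\ \alpha+1,\ 2\alpha$, all even. (This is exactly why I use $4\alpha$ rather than $2\alpha$: for $\lambda=2\alpha$ the two contributing spaces $\harm_{0,\alpha},\harm_{\alpha-1,1}$ have odd bidegree sum, so their invariant parts vanish and $E_{2\alpha}$ carries no information.) Applying the corollary to Theorem~\ref{thm:even-quotients-asymptotics} to each of the four spaces (two with $p+q=2\alpha$, two with $p+q=\alpha+1$) gives
\[
\dim E_{4\alpha} \;=\; \sum_{\substack{p\ge 0,\ q\ge 1\\ q(p+1)=2\alpha}}\dim(\harm_{p,q})^{\Gamma} \;\sim\; \frac{2(2\alpha+1)}{\abs{\Gamma}} + \frac{2(\alpha+2)}{\abs{\Gamma}} \;=\; \frac{6\alpha+O(1)}{\abs{\Gamma}}.
\]

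The final step reads off $\abs{\Gamma}$: dividing by $\alpha$ and letting $\alpha\to\infty$ through the primes (so that the relevant bidegree sums $2\alpha$ and $\alpha+1$ tend to infinity through even integers), one obtains
\[
\lim_{\substack{\alpha\to\infty\\ \alpha\ \mathrm{prime}}}\frac{\dim E_{4\alpha}}{\alpha} \;=\; \frac{6}{\abs{\Gamma}},
\]
and since each $\dim E_{4\alpha}$ is simply the multiplicity of $4\alpha$ in the given spectrum, this limit is audible; hence so is $\abs{\Gamma} = 6\big/\lim_{\alpha\to\infty,\ \alpha\ \mathrm{prime}}(\dim E_{4\alpha})/\alpha$, which completes the argument. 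I expect the only delicate point to be the exact enumeration of the spaces $\harm_{p,q}$ that feed the eigenvalue $4\alpha$ together with the verification that all of their bidegree sums are even, so that the asymptotics of the corollary apply to every summand with the same universal constant; the remainder is bookkeeping on top of Theorem~\ref{thm:even-quotients-asymptotics} and Lemma~\ref{Ikedas}. (An alternative, heavier route would prove a Weyl-type law for the counting function of $\boxb$ on $\S^3$ and observe that passing to $\S^3/\Gamma$ divides its leading coefficient by $\abs{\Gamma}$; the prime-eigenvalue argument above avoids that machinery.)
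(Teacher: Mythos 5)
Your proposal is correct and follows essentially the same route as the paper: both use the eigenvalue family $4\alpha$ for odd primes $\alpha$, identify the same four contributing spaces $\harm_{0,2\alpha},\harm_{2\alpha-1,1},\harm_{1,\alpha},\harm_{\alpha-1,2}$, and extract $\abs{\Gamma}$ from the audible limit $\lim_\alpha \dim(E_{4\alpha})/\alpha = 6/\abs{\Gamma}$ via the corollary to Theorem \ref{thm:even-quotients-asymptotics}. The only cosmetic difference is that the paper first collapses $\dim(\harm_{p,q}^\Gamma)$ to $\dim(\harm_{0,p+q}^\Gamma)$ before taking asymptotics, while you apply the corollary to each summand directly; your added explanation of why $2\alpha$ would fail is a nice touch but not a departure.
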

\begin{proof}
    Let $\alpha$ be a prime greater than $2$ (viz. odd), and consider the dimension of the eigenspace $E_{4\alpha}$ of $\boxb$ on the quotient corresponding to the eigenvalue $4\alpha$. Since the eigenvalue of $\boxb$ coming from $H_{p,q}$ is $2q(p+1)$, the eigenvalue $4\alpha$ may come from four spherical harmonic spaces: $\harm_{0,2\alpha}^\Gamma$, $\harm_{2\alpha-1, 1}^\Gamma$, $\harm_{1,\alpha}^\Gamma$, and $\harm_{\alpha-1,2}^\Gamma$. The dimension of this eigenspace is hence
    \[\dim(E_{4\alpha}) = \dim(\harm_{0,2\alpha}^\Gamma) + \dim(\harm_{2\alpha-1, 1}^\Gamma) + \dim(\harm_{1,\alpha}^\Gamma) + \dim(\harm_{\alpha-1,2}^\Gamma) = 2 (\dim(\harm_{0,2\alpha}^\Gamma) + \dim(\harm_{0,\alpha+1}^\Gamma)).\]
    The quantity $\dim(E_{4\alpha})$ is audible for each prime greater than $2$, and consequently the limit
    \[\lim_{\substack{\alpha \to \infty \\ \alpha \text{ prime}}} \frac{\dim(E_{4\alpha})}{\alpha}\]
    is audible too. Thanks to Theorem \ref{thm:even-quotients-asymptotics}, the value of this limit is
    \begin{align*}
        \lim_{\substack{\alpha \to \infty \\ \alpha \text{ prime}}} \frac{\dim(E_{4\alpha})}{\alpha} &= 2 \lim_{\alpha} \frac{\dim(\harm_{0,2\alpha}^\Gamma) + \dim(\harm_{0,\alpha+1}^\Gamma)}{\alpha} = 2 \left( \lim_{\alpha} \frac{\dim(\harm_{0,2\alpha}^\Gamma)}{\alpha} + \lim_{\alpha} \frac{\dim(\harm_{0,\alpha+1}^\Gamma)}{\alpha} \right) \\
        &= 2 \left( 2 \lim_{\alpha} \frac{\dim(\harm_{0,2\alpha}^\Gamma)}{2\alpha + 1} + \lim_{\alpha} \frac{\dim(\harm_{0,\alpha+1}^\Gamma)}{\alpha + 2} \right) = 2 \left( \frac{2}{|\Gamma|} + \frac{1}{|\Gamma|} \right) = \frac{6}{|\Gamma|}.
    \end{align*}
    Therefore, one can hear the order of $\Gamma$.
\end{proof}

We now have all the ingredients to prove the main conclusion of this section.

\begin{theorem}
    Let $\Gamma$ be a finite subgroup of $\on{SU}(2)$. Given the spectrum of $\boxb$ on $\S^3 / \Gamma$, one can hear the order of $\Gamma$.
\end{theorem}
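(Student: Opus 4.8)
The plan is to reduce to the two results already in hand. Lemma \ref{lem:hear-odd-order} shows that the spectrum of $\boxb$ on $\S^3/\Gamma$ determines $\abs{\Gamma}$ once we know $\abs{\Gamma}$ is odd, and Lemma \ref{lem:hear-even-order} does the same once we know $\abs{\Gamma}$ is even. So the entire content of the theorem is the extra step of \emph{hearing the parity} of $\abs{\Gamma}$ from the spectrum; after that we simply invoke the appropriate one of the two lemmas.

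To detect the parity I would look at a single eigenvalue. Fix any odd prime $\alpha$ and consider the eigenspace $E_{2\alpha}$ of $\boxb$ on $\S^3/\Gamma$. Since the eigenvalue of $\boxb$ on $\harm_{p,q}$ is $2q(p+1)$, the value $2\alpha$ forces $q(p+1)=\alpha$, so $(p,q)=(0,\alpha)$ or $(p,q)=(\alpha-1,1)$ by primality; both spaces have odd total degree $\alpha$. Hence, by Lemma \ref{Ikedas}, $\dim E_{2\alpha}=\dim\harm_{0,\alpha}^\Gamma+\dim\harm_{\alpha-1,1}^\Gamma$, which is an audible quantity. Now the key dichotomy: if $\abs{\Gamma}$ is even then $\Gamma$ contains the unique involution of $\on{SU}(2)$, i.e. $C_2\leq\Gamma$, so every $\Gamma$-invariant function is even and therefore supported on even-degree spherical harmonics; since $\harm_\alpha$ consists of odd functions when $\alpha$ is odd, both summands vanish and $\dim E_{2\alpha}=0$. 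If instead $\abs{\Gamma}$ is odd then $\Gamma$ is cyclic, and Lemma \ref{cyclic case} (or, more concretely, the explicit invariant monomial in Lemma \ref{attempt2}) gives $\dim\harm_{0,\alpha}^\Gamma\geq 1$, so $\dim E_{2\alpha}\geq 1>0$. Thus $\abs{\Gamma}$ is even if and only if $\dim E_{2\alpha}=0$, a purely spectral criterion; with the parity known, Lemma \ref{lem:hear-odd-order} or Lemma \ref{lem:hear-even-order} delivers $\abs{\Gamma}$ itself.

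I do not anticipate a serious obstacle here; the proof is essentially bookkeeping over results already proved. The only points that need a word of care are (i) confirming that no bidegree other than $(0,\alpha)$ and $(\alpha-1,1)$ contributes to the eigenvalue $2\alpha$, which is immediate from primality of $\alpha$, and (ii) making sure that the parity test is insensitive to the finer $V_f\oplus W_f$ structure of Proposition \ref{Gamma decomp}: in fact it is, because in the even case both $\harm_{0,\alpha}^\Gamma$ and $\harm_{\alpha-1,1}^\Gamma$ vanish for the same reason (odd degree versus even invariants), and in the odd case $\harm_{0,\alpha}^\Gamma$ alone is already nonzero. If one wanted to avoid even mentioning $\harm_{\alpha-1,1}$, one could restrict attention to whether $\harm_{0,\alpha}^\Gamma=0$, but this is not directly an eigenspace, so using $E_{2\alpha}$ is the cleaner route.
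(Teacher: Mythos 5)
Your proof is correct and follows essentially the same route as the paper: detect the parity of $\abs{\Gamma}$ by testing whether $2\alpha$ is an eigenvalue for an odd prime $\alpha$ (zero eigenspace in the even case because $-\mathrm{Id}\in\Gamma$ forces invariants to be even, nonzero in the odd case by Lemma \ref{cyclic case}), and then invoke Lemma \ref{lem:hear-odd-order} or Lemma \ref{lem:hear-even-order} accordingly. The one refinement worth making --- and the paper's own wording shares this slip --- is that when $\abs{\Gamma}$ is odd and the fixed prime satisfies $\alpha<\abs{\Gamma}$, Lemma \ref{cyclic case} gives $\dim\harm_{0,\alpha}^\Gamma=0$ and hence $\dim E_{2\alpha}=0$ as well, so the audible criterion should be that $2\alpha$ lies in the spectrum for \emph{some} (equivalently, every sufficiently large) odd prime $\alpha$, not for a single arbitrarily chosen one.
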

\begin{proof}
    The first step is to identify the parity of $|\Gamma|$. To this end, pick your favorite odd prime $\alpha$ and check whether the spectrum of $\boxb$ contains the eigenvalue $2\alpha$. That eigenvalue is in the spectrum if and only if $|\Gamma|$ is odd, thanks to Lemma \ref{cyclic case}. Hence, we can identify whether $\Gamma$ has even or odd order, and the proof is completed by invoking either Lemma \ref{lem:hear-even-order} or Lemma \ref{lem:hear-odd-order} according to the parity of $|\Gamma|$.
\end{proof}

\section{Further Directions}

In this section, we note a few directions for further investigation following the work in this note. Although we have some partial results for some of these directions, we kept them out of this note for further development.

\begin{itemize}
\item In this note, we considered quotients of the sphere by finite subgroups of $\on{SU}(2)$. How much of this work also holds for finite the subgroups of $\on{SO}(4)$ (i.e. more general classes of spherical 3-manifolds)?
\item In Section \ref{sec:hear-group-order}, we showed that one can hear the order of the quotienting group $\Gamma$. Can one hear the exact group $\Gamma$?
\item Can we hear the order of the group $\Gamma$ when we look at the quotients of the Rossi sphere? Note that in this case, we do not have an explicit formula for the eigenvalues and the spectrum has accumulation points.
\item In Section \ref{sec:hear-group-order}, we only looked at the 3-sphere $\S^3$, but we know the exact eigenvalues of $\boxb$ on other spheres $\S^{2n-1}$ too. Can one look at the Kohn Laplacian on the quotients of higher dimensional spheres?
\end{itemize}

\section*{Acknowledgements} 
We would like to thank the referees for constructive feedback.
This research was conducted at the NSF REU Site (DMS-1950102, DMS-1659203) in Mathematical Analysis and Applications at the University of Michigan-Dearborn. We would like to thank the National Science Foundation, National Security Agency, and University of Michigan-Dearborn for their support. 

\bibliographystyle{alpha}

\begin{thebibliography}{ABB{\etalchar{+}}19}

\bibitem[ABB{\etalchar{+}}19]{REU18}
John Ahn, Mohit Bansil, Garrett Brown, Emilee Cardin, and Yunus~E. Zeytuncu.
\newblock Spectra of {K}ohn {L}aplacians on spheres.
\newblock {\em Involve}, 12(5):855--869, 2019.

\bibitem[ABR01]{Axler13Harmonic}
Sheldon Axler, Paul Bourdon, and Wade Ramey.
\newblock {\em Harmonic function theory}, volume 137 of {\em Graduate Texts in
  Mathematics}.
\newblock Springer-Verlag, New York, second edition, 2001.

\bibitem[ABRZ19]{REU17}
Tawfik Abbas, Madelyne~M. Brown, Allison Ramasami, and Yunus~E. Zeytuncu.
\newblock Spectrum of the {K}ohn {L}aplacian on the {R}ossi sphere.
\newblock {\em Involve}, 12(1):125--140, 2019.

\bibitem[BdM75]{Monvel75}
L.~Boutet~de Monvel.
\newblock Int\'egration des \'equations de {C}auchy-{R}iemann induites
  formelles.
\newblock pages Exp. No. 9, 14, 1975.

\bibitem[BE90]{BurnsEpstein}
Daniel~M. Burns and Charles~L. Epstein.
\newblock Embeddability for three-dimensional {CR}-manifolds.
\newblock {\em J. Amer. Math. Soc.}, 3(4):809--841, 1990.

\bibitem[BE96]{Epstein}
John Bland and C.~L. Epstein.
\newblock Embeddable {CR}-structures and deformations of pseudoconvex surfaces.
  {I}. {F}ormal deformations.
\newblock {\em J. Algebraic Geom.}, 5(2):277--368, 1996.

\bibitem[Bog91]{Boggess91CR}
A.~Boggess.
\newblock {\em CR Manifolds and the Tangential Cauchy Riemann Complex}.
\newblock Studies in Advanced Mathematics. Taylor \& Francis, 1991.

\bibitem[Bur79]{Burns79}
Daniel~M. Burns, Jr.
\newblock Global behavior of some tangential {C}auchy-{R}iemann equations.
\newblock In {\em Partial differential equations and geometry ({P}roc. {C}onf.,
  {P}ark {C}ity, {U}tah, 1977)}, volume~48 of {\em Lecture Notes in Pure and
  Appl. Math.}, pages 51--56. Dekker, New York, 1979.

\bibitem[CS01]{CS01}
S.C. Chen and M.C. Shaw.
\newblock {\em Partial Differential Equations in Several Complex Variables}.
\newblock AMS/IP studies in advanced mathematics. American Mathematical
  Society, 2001.

\bibitem[Fol72]{Folland}
G.~B. Folland.
\newblock The tangential {C}auchy-{R}iemann complex on spheres.
\newblock {\em Trans. Amer. Math. Soc.}, 171:83--133, 1972.

\bibitem[Fu05]{Fu2005}
Siqi Fu.
\newblock Hearing pseudoconvexity with the {K}ohn {L}aplacian.
\newblock {\em Math. Ann.}, 331(2):475--485, 2005.

\bibitem[Fu08]{Fu2008}
Siqi Fu.
\newblock Hearing the type of a domain in {$\Bbb C^2$} with the
  {$\overline\partial$}-{N}eumann {L}aplacian.
\newblock {\em Adv. Math.}, 219(2):568--603, 2008.

\bibitem[Ger31]{gersh}
Semyon Ger{\v s}gorin.
\newblock Uber die abgrenzung der eigenwerte einer matrix.
\newblock {\em Bulletin de l'Acad\'emie des Sciences de l'URSS. Classe des
  sciences math\'ematiques et na}, (6):749--754, 1931.

\bibitem[GH17]{garcia_horn_2017}
Stephan~Ramon Garcia and Roger~A. Horn.
\newblock {\em A Second Course in Linear Algebra}.
\newblock Cambridge Mathematical Textbooks. Cambridge University Press, 2017.

\bibitem[IY79]{Ikeda79}
Akira Ikeda and Yoshihiko Yamamoto.
\newblock On the spectra of 3-dimensional lens spaces.
\newblock {\em Osaka J. Math.}, 16(2):447--469, 1979.

\bibitem[Kac66]{Kac}
Mark Kac.
\newblock Can one hear the shape of a drum?
\newblock {\em Amer. Math. Monthly}, 73(4, part II):1--23, 1966.

\bibitem[Kos84]{pnas}
Bertram Kostant.
\newblock On finite subgroups of {${\rm SU}(2)$}, simple {L}ie algebras, and
  the {M}c{K}ay correspondence.
\newblock {\em Proc. Nat. Acad. Sci. U.S.A.}, 81(16, , Phys. Sci.):5275--5277,
  1984.

\bibitem[Lee13]{lee-smooth-manifolds}
John Lee.
\newblock {\em Introduction to Smooth Manifolds}.
\newblock Graduate Texts in Mathematics. Springer, 2013.

\bibitem[Ros65]{Rossi65}
H.~Rossi.
\newblock Attaching analytic spaces to an analytic space along a pseudoconcave
  boundary.
\newblock In {\em Proc. {C}onf. {C}omplex {A}nalysis ({M}inneapolis, 1964)},
  pages 242--256. Springer, Berlin, 1965.

\bibitem[Str10]{StraubeBook}
Emil~J. Straube.
\newblock {\em Lectures on the $\mathcal{L}^{2}$-{S}obolev theory of the
  $\overline\partial$-{N}eumann problem}, volume~7 of {\em ESI Lectures in
  Mathematics and Physics}.
\newblock European Mathematical Society (EMS), Z\"urich, 2010.

\bibitem[Zas49]{Zass}
Hans Zassenhaus.
\newblock {\em The {T}heory of {G}roups}.
\newblock Chelsea Publishing Company, New York, N. Y., 1949.
\newblock Translated from the German by Saul Kravetz.

\end{thebibliography}

\newcommand{\etalchar}[1]{$^{#1}$}

\end{document}